\documentclass[final]{siamltex}

\usepackage{amsfonts}

\usepackage{hyperref}

\usepackage{ifthen}
\usepackage{amssymb,amsmath}

\usepackage{graphicx}

\usepackage{algorithm}
\usepackage{algorithmic}

\newboolean{arxiv}
\setboolean{arxiv}{false}
\ifthenelse{\boolean{arxiv}}
{
\newcommand{\diag}{\mathrm{diag}}
\usepackage{amsthm}

\theoremstyle{plain}
 \newtheorem{theorem}{Theorem}
 \newtheorem{corollary}{Corollary}
 \newtheorem{proposition}{Proposition}
 \newtheorem{lemma}{Lemma}
 
 \theoremstyle{definition}
 \newtheorem{definition}{Definition}
}
{}
\newtheorem{remark}{Remark}
\newtheorem{counterex}{Counter-Example}

\ifthenelse{\boolean{arxiv}}
{
\newenvironment{proofeq}{\begin{proof}}{\qedhere \] \end{proof}}
}
{
\newenvironment{proofeq}{{\em Proof}. }{\qquad \endproof \]}
}
\newcommand{\norm}[1]{\lVert #1 \rVert}
\newcommand{\hilbert}{\mathrm{d}}

\newcommand{\abs}[1]{\lvert #1 \rvert}

\newcommand{\midd}{\mathrm{mid}}

\title{Convergence of Tomlin's HOTS algorithm}

\ifthenelse{\boolean{arxiv}}
{
\author{Olivier Fercoq}
\address{INRIA Saclay and CMAP Ecole Polytechnique, ({\tt olivier.fercoq@inria.fr}). The doctoral work of the author is supported by Orange Labs through the research contract CRE~3795 with INRIA}
}
{
\author{Olivier Fercoq \thanks{INRIA Saclay and CMAP Ecole Polytechnique, ({\tt olivier.fercoq@inria.fr}). The doctoral work of the author is supported by Orange Labs through the research contract CRE~3795 with INRIA}}
}

\begin{document}

\maketitle


\begin{abstract}
The HOTS algorithm uses the hyperlink structure of the web to compute a vector of scores with which one can rank web pages. The HOTS vector is the vector of the exponentials of the dual variables of an optimal flow problem (the ``temperature'' of each page). The flow represents an optimal distribution of web surfers on the web graph in the sense of entropy maximization. 

In this paper, we prove the convergence of Tomlin's HOTS algorithm. We first study a simplified version of the algorithm, which is a fixed point scaling algorithm designed to solve the matrix balancing problem for nonnegative irreducible matrices. The proof of convergence is general (nonlinear Perron-Frobenius theory) and applies to a family of deformations of HOTS. Then, we address the effective HOTS algorithm, designed by Tomlin for the ranking of web pages. The model is a network entropy maximization problem generalizing matrix balancing. We show that, under mild assumptions, the HOTS algorithm converges with a linear convergence rate. The proof relies on a uniqueness property of the fixed point and on the existence of a Lyapunov function.

We also show that the coordinate descent algorithm can be used to find the ideal and effective HOTS vectors and we compare HOTS and coordinate descent on fragments of the web graph. Our numerical experiments suggest that the convergence rate of the HOTS algorithm may deteriorate when the size of the input increases. We thus give a normalized version of HOTS with an experimentally better convergence rate.
\end{abstract}


\section{Introduction}

Internet search engines use a variety of algorithms to sort web pages based on their text
content or on the hyperlink structure of the web.
In this paper, we focus on an algorithm proposed by Tomlin in~\cite{Tomlin-HOTS} 
for the ranking of web pages, called HOTS. 
It may also be used for other purposes like the ranking of sport teams~\cite{Govan-ranking}.
Like PageRank~\cite{Brin-Anatomy}, HITS~\cite{Kleinberg-HITS} and SALSA~\cite{Lempel-Salsa}, HOTS uses the hyperlink structure of the web (see also~\cite{LangvilleMeyer-SurveyEigenvectorMethods, LanMey-Beyond} for surveys on 
link-based ranking algorithms).
This structure is summarized in the web graph,
which is a digraph with a node for each web page and an arc between pages~$i$ 
and~$j$ if there is a~hyperlink from page~$i$ to page~$j$. 

The HOTS vector, used to rank web pages, is the vector of the exponentials of the dual variables 
of an optimal flow problem. The flow
represents an optimal distribution of web surfers on the web graph in the sense of
entropy maximization. The dual variable, one by page, is interpreted as the ``temperature'' of
the page, the hotter a page the better. In the case of the PageRank,
the flow of websurfers is determined by the uniform transition probability of following
one hyperlink in the current page. This transition rule is in fact arbitrary.
The HOTS model assumes that the web surfers choose the hyperlink to follow
by maximizing the entropy of the flow. 
Tomlin showed that this vector is solution of a nonlinear fixed point equation.
He then proposed a scaling algorithm to compute the HOTS vector, based on this fixed point equation. 

This algorithm solves the matrix balancing problem
studied among others in~\cite{Hartfiel-balancing, EavesSchneider, Schneider-dss, Schneider-scaling}. 
Given a $n \times n$ nonnegative matrix $A$, the matrix balancing problem
consists in finding a matrix $X$ of the form $X=D^{-1} A D$
with $D$ diagonal definite positive and such that $\sum_k X_{i,k}=\sum_j X_{j,i}$ for all $i$.
We shall compare Tomlin's HOTS algorithm with Schneider and Zenios's coordinate descent DSS algorithm~\cite{Schneider-dss}.
The main difference between these algorithms is that in coordinate descent, the scaling is done node
 by node in the network (i.e.\ in a Gauss-Seidel fashion) whereas in Tomlin's HOTS algorithm, the scaling is done all the nodes at the same time, in a Jacobi fashion.

A problem close to the matrix balancing problem is the equivalence scaling problem, where
given a $m \times n$ nonnegative matrix $A$, we search for a matrix $X$ of the
form $X=D_1 A D_2$ with $D_1$ and $D_2$ diagonal definite positive and
such that $X$ is bistochastic.
The Sinkhorn-Knopp~\cite{SinkhornKnopp} algorithm is a famous algorithm designed for the resolution of the scaling problem.
We may see HOTS algorithm as the analog of Sinkhorn-Knopp algorithm for the matrix balancing problem:
both algorithms correspond to fixed point iterations on the diagonal scalings.
Moreover, Smith~\cite{smith-sinkhorn} and Knight~\cite{Knight-Sinkhorn} proposed to
rank web pages according to the inverse of the corresponding entry in
the diagonal scaling.

 However, whereas Sinkhorn-Knopp algorithm~\cite{SinkhornKnopp} 
and the coordinate descent algorithm~\cite{Tseng-descentCoordinate} have been proved to converge, 
it does not seem that a theoretical result on the convergence of Tomlin's HOTS algorithm
has been stated in previous works, although experimentations~\cite{Tomlin-HOTS}
suggest that it is the case.
Indeed, Knight~\cite[Sec.~5]{Knight-Sinkhorn} rose the fact that Tomlin did not
state any convergence result for HOTS algorithm.
Another algorithm for the matrix balancing problem is given in~\cite{Johnson-DomEig},
based on the equivalence between the matrix balancing problem and the problem of minimizing
the dominant eigenvalue of an essentially nonnegative matrix under trace-preserving diagonal perturbations~\cite{Johnson-DomEigTheory}.

In this paper, we prove the convergence of Tomlin's HOTS algorithm.
We first study a simplified version of the algorithm that we call the ideal HOTS algorithm.
It is a fixed point scaling algorithm that solves the matrix balancing problem
for nonnegative irreducible matrices. We prove its convergence thanks to nonlinear
Perron-Frobenius theory (Theorem~\ref{thm:convidealHots}). 
The proof methods are general and apply to a family of deformations of HOTS.
Then, we address the effective HOTS algorithm, for the general case, which is the version designed by Tomlin for the ranking of web pages. Indeed the web graph is not strongly connected, which implies that the balanced matrix does not necessarily exist. The model is a nonlinear network entropy maximization problem which generalizes matrix balancing. 
We show in Theorem~\ref{thm:convHots} that under mild assumptions the HOTS algorithm converges with a linear rate of convergence. The proof relies on the properties of the ideal HOTS algorithm: 
uniqueness of the fixed point up to an additive constant
and decrease of a Lyapunov function at every step (Theorem~\ref{thm:thetadecroit}).

We also show that Schneider and Zenios's coordinate descent algorithm can be adapted to 
find the ideal and effective HOTS vectors.
We compare the HOTS algorithm and coordinate descent 
on fragments of the web graph in Section~\ref{sec:compalgos}. 
We considered small, medium and large size problems. In all cases the respective computational costs of both algorithms
were similar.
As the performances of the HOTS algorithm depends on the primitivity
of the adjacency matrix considered and coordinate descent does not, coordinate descent 
can be thought to have a wider range of applications.
However, the actual implementation of the HOTS algorithm is attractive for web scale problems:
whereas coordinate descent DSS uses at each iteration (corresponding to a given web page) information
from incoming and outgoing hyperlinks, the HOTS algorithm reduces to elementwise operations and left and right matrix vector products. Hence, an iteration of the HOTS algorithm can be performed without computing neither
 storing the transpose of the adjacency matrix.

We give an exact coordinate descent algorithm for
the truncated scaling problem defined in~\cite{Schneider-scaling1}
and we extend its use to the problem of computing the HOTS vector when some
bounds on the web surfers flow are known. Experimental
results show that exact coordinate descent is an efficient algorithm for web scale problems and 
that it is faster than the inexact coordinate descent algorithm presented in~\cite{Schneider-scaling}. 
Finally, we remarked that the convergence rate of the effective HOTS
algorithm seems to deteriorate when the size of the graph considered increases.
In order to overcome this feature, we propose a normalized version of 
the HOTS algorithm where we maximize a relative entropy of the flow
of web surfers instead of the classical entropy. A byproduct is that the associated ranking
favorizes pages with no outlink less than Tomlin's HOTS.

The paper is organized as follows. In Section~\ref{sec:nonlinPerronFrobeniusTheory}, 
we recall the main theorems of nonlinear Perron-Frobenius theory,
in Section~\ref{sec:idealhots}, we prove the convergence of the ideal HOTS algorithm
and we give a Lyapunov function for this algorithms. In Section~\ref{sec:effhots},
we give the convergence rate of the effective HOTS algorithm. In Section~\ref{sec:trunc},
we study the HOTS problem with bounds on the flow of web surfers.
In Section~\ref{sec:compalgos}, we compare various candidate algorithms to compute the HOTS vector
and in Section~\ref{sec:nomHots}, we give the normalized HOTS algorithm.

\section{Nonlinear Perron-Frobenius theory}
\label{sec:nonlinPerronFrobeniusTheory}

The classical Perron-Frobenius theorem (see~\cite{BermanPlemmons-nonnegMat} for instance) states 
that the spectral radius of a nonnegative matrix $A$ is an eigenvalue
(called the Perron root) and that there exists an associated eigenvector with nonnegative coordinates.
If, in addition, $A$ is irreducible, then the Perron root is simple and the (unique up to a multiplicative constant)
nonnegative eigenvector, called the Perron vector, has only positive entries.
The nonlinear Perron-Frobenius theory is an extension of the Perron-Frobenius theorem
to monotone and homogeneous maps.
It has a multiplicative and an additive formulation.

\begin{definition}
A map $T: \mathbb{R}_+^n \to \mathbb{R}_+^n$ is monotone if for all vectors $p$, $q$ such that $p \leq q$,
$T(p) \leq T(q)$.
A map $T: \mathbb{R}_+^n \to \mathbb{R}_+^n$ is homogeneous if for all vector $p$ and for all nonnegative real $\lambda$,
$T(\lambda p) = \lambda T(p)$.
\end{definition}

\begin{definition} \label{defn:monoHomA}
A map $T: \mathbb{R}^n \to \mathbb{R}^n$ is additively homogeneous if for all vector $p$ and for all real $\lambda$,
$T(\lambda + p) = \lambda + T(p)$.
\end{definition}

We can transform a multiplicative monotone, homogeneous map $T^\times$
 into a monotone, additively homogeneous map $T^+$ and
vice versa by the following operation called the ``logarithmic glasses'':
\[
 T^+(p)=\log(T^\times(\exp(p)))
\]
where $\log$ and $\exp$ act elementwise.

The following results show that monotone and nonexpansive maps are indeed nonexpansive.
Hence, they are well suited for iterative algorithms.

\begin{proposition}[\cite{CrandallTartar}]
An additively homogeneous map is nonexpansive for the sup-norm
if and only if it is monotone.
\end{proposition}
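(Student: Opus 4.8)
The plan is to prove the two implications separately, working with an additively homogeneous map $T : \mathbb{R}^n \to \mathbb{R}^n$. Throughout I write $\norm{\cdot}$ for the sup-norm and use the elementary fact that for two vectors $p,q$ one has $p \leq q + \norm{p-q}\mathbf{1}$, where $\mathbf{1}$ is the all-ones vector; this is just a componentwise restatement of $p_i - q_i \leq \norm{p-q}$.

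First I would show that monotone plus additively homogeneous implies nonexpansive. Fix $p,q \in \mathbb{R}^n$ and set $\lambda = \norm{p-q}$. Then $p \leq q + \lambda \mathbf{1}$, so by monotonicity $T(p) \leq T(q + \lambda \mathbf{1})$, and by additive homogeneity $T(q+\lambda\mathbf{1}) = T(q) + \lambda \mathbf{1}$. Hence $T(p) - T(q) \leq \lambda \mathbf{1}$ componentwise. Exchanging the roles of $p$ and $q$ gives $T(q) - T(p) \leq \lambda \mathbf{1}$ as well, so $\norm{T(p)-T(q)} \leq \lambda = \norm{p-q}$, which is nonexpansiveness.

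For the converse, suppose $T$ is additively homogeneous and nonexpansive for the sup-norm, and let $p \leq q$. I want $T(p) \leq T(q)$. Put $\lambda = \norm{q - p}$, so that $0 \leq q - p \leq \lambda \mathbf{1}$, i.e.\ $q \leq p + \lambda \mathbf{1}$ and also $p \leq q$. The key observation is that $\norm{p - (q-\lambda\mathbf{1})} = \norm{(p-q) + \lambda\mathbf{1}}$; since $0 \leq q-p \leq \lambda \mathbf{1}$ we have $-\lambda \leq p_i - q_i + \lambda \leq \lambda$ wait—more carefully, $0 \le (p_i-q_i)+\lambda \le \lambda$, so this sup-norm equals $\lambda$. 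Therefore nonexpansiveness and additive homogeneity give
\[
\norm{T(p) - T(q) + \lambda \mathbf{1}} = \norm{T(p) - T(q-\lambda\mathbf{1})} \leq \norm{p - (q-\lambda\mathbf{1})} = \lambda .
\]
In particular each component satisfies $T(p)_i - T(q)_i + \lambda \leq \lambda$, that is $T(p)_i \leq T(q)_i$, which is exactly $T(p) \leq T(q)$.

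The main subtlety—not an obstacle so much as the point where one must be careful—is the converse direction: one has to choose the right auxiliary shift ($q - \lambda\mathbf{1}$ with $\lambda = \norm{q-p}$) so that the vector $p$ and the shifted vector are at sup-distance exactly $\lambda$ while their difference is one-signed, allowing the norm bound to be read off componentwise. The forward direction is routine once the inequality $p \leq q + \norm{p-q}\mathbf{1}$ is in hand. No compactness or continuity is needed; everything is a direct manipulation of componentwise inequalities together with the defining identity of additive homogeneity.
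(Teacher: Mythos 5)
Your proof is correct. The paper does not prove this proposition at all --- it is quoted from the cited reference \cite{CrandallTartar} --- and your argument is a correct, self-contained rendition of the standard Crandall--Tartar proof: shift by $\norm{p-q}_\infty\mathbf{1}$ and use additive homogeneity in both directions. One cosmetic point: in the converse, $\norm{p-(q-\lambda\mathbf{1})}_\infty$ equals $\lambda+\max_i(p_i-q_i)\leq\lambda$, with equality only if $p_i=q_i$ for some $i$; since all you need is the upper bound $\leq\lambda$, this does not affect the validity of the argument.
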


For a more general result, we shall need Hilbert's projective metric.

\begin{definition}
For $x,y$ two vectors of $\mathbb{R}^n$, Hilbert's projective metric between $x$ and $y$ is defined as
\vspace{-2ex}
\[
 \hilbert(x,y)=\log(\max_{i,j \in [n]} \frac{x_i y_j}{y_i x_j})
\]\vspace{-1ex}
\end{definition}

\begin{proposition}[\cite{Bushell-hilbertcontraction}]
Any monotone and homogeneous map is nonexpansive for Hilbert's metric.
\end{proposition}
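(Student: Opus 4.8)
The plan is to reduce the statement to the elementary observation that $\hilbert(x,y)$ measures precisely how tightly $x$ and $y$ can be sandwiched between scalar multiples of one another. For vectors $x,y$ with positive coordinates, set
\[
M(x,y) = \max_{i \in [n]} \frac{x_i}{y_i}, \qquad m(x,y) = \min_{i \in [n]} \frac{x_i}{y_i} .
\]
Since $\max_{i,j}\frac{x_i y_j}{y_i x_j} = \big(\max_i \frac{x_i}{y_i}\big)\big(\max_j \frac{y_j}{x_j}\big) = M(x,y)/m(x,y)$, we have $\hilbert(x,y) = \log\big(M(x,y)/m(x,y)\big)$; moreover $m(x,y)$ and $M(x,y)$ are positive reals and the coordinatewise inequalities $m(x,y)\,y \le x \le M(x,y)\,y$ hold.

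Next I would feed these sandwich inequalities through the two hypotheses on $T$. Monotonicity gives $T\big(m(x,y)\,y\big) \le T(x) \le T\big(M(x,y)\,y\big)$, and homogeneity — applied with the nonnegative scalars $m(x,y)$, $M(x,y)$, which is exactly what the definition permits — turns this into $m(x,y)\,T(y) \le T(x) \le M(x,y)\,T(y)$. Reading these back out coordinatewise: the upper bound yields $T(x)_i/T(y)_i \le M(x,y)$ for every $i$, hence $M(T(x),T(y)) \le M(x,y)$; the lower bound yields $m(x,y) \le T(x)_i/T(y)_i$ for every $i$, hence $m(T(x),T(y)) \ge m(x,y)$. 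Combining the two,
\[
\hilbert\big(T(x),T(y)\big) = \log\frac{M(T(x),T(y))}{m(T(x),T(y))} \le \log\frac{M(x,y)}{m(x,y)} = \hilbert(x,y),
\]
which is the asserted nonexpansiveness.

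There is no real obstacle here; the only points needing care are bookkeeping ones. One must work on the set of vectors all of whose coordinates, and all of whose images under $T$, are positive, so that every ratio above is well defined — this is the natural domain of Hilbert's metric, and the monotone homogeneous maps of interest (in particular the HOTS iteration studied below) preserve it. One should also keep in mind that $\hilbert$ is only a projective pseudometric, vanishing on proportional vectors, which is harmless for the inequality above. Finally, note that at this level of generality no contraction ratio strictly less than $1$ is obtained; a genuine contraction requires additional primitivity-type hypotheses, and indeed this is where the dependence of the convergence rate on primitivity of the adjacency matrix will enter later in the paper.
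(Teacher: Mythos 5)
Your argument is correct: the identity $\hilbert(x,y)=\log\bigl(M(x,y)/m(x,y)\bigr)$, the sandwich $m(x,y)\,y\le x\le M(x,y)\,y$, and one application each of monotonicity and homogeneity give exactly the claimed nonexpansiveness, and your remarks about the positive domain and the projective (pseudometric) character of $\hilbert$ are the right caveats. The paper itself offers no proof here — it simply cites Bushell — and your argument is the standard one behind that citation, so there is nothing further to reconcile.
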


\begin{definition}
For a map $T:\mathbb{R}^n \to \mathbb{R}^n$ or $T:\mathbb{R}_+^n \to \mathbb{R}_+^n$, we call the graph of $T$ and we denote it $G(T)$, the directed graph with nodes $1, \ldots, n$ and
an arc from $i$ to $j$ if and only if $\lim_{t \to +\infty} T(t e_i)= +\infty$ where $e_i$ is
the $i$th basis vector.
\end{definition}

The following results give conditions for the existence and
uniqueness of the ``eigenvector'' of a monotone, (additively or multiplicatively) homogeneous map.

\begin{theorem}[Theorem 2 in \cite{GaubGun-existenceEig}] \label{thm:GaubGun}
Let $T$ be a monotone, additively homogeneous map.
If $G(T)$ is strongly connected, then there exists $u\in \mathbb{R}^n$ and
$\lambda \in \mathbb{R}$ such that $T(u)=\lambda + u$.
We say that $u$ is an additive eigenvector of $f$.
\end{theorem}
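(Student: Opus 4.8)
The plan is to reduce the existence of an additive eigenvector to a fixed point problem on the quotient space $\mathbb{R}^n/\mathbb{R}\mathbf{1}$ and to solve that problem by a compactness argument in which the strong connectivity of $G(T)$ is the only hypothesis that is really used.

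First I would record the regularity of $T$. Being monotone and additively homogeneous, $T$ is nonexpansive for the sup-norm by Crandall and Tartar's proposition~\cite{CrandallTartar}, hence continuous; and it is nonexpansive for the additive Hilbert seminorm $\operatorname{osc}(x):=\max_i x_i-\min_i x_i$, since from $(\min_i(x_i-y_i))\mathbf 1\le x-y\le(\max_i(x_i-y_i))\mathbf 1$ and monotonicity one gets the same sandwich for $T(x)-T(y)$, i.e.\ $\operatorname{osc}(T(x)-T(y))\le\operatorname{osc}(x-y)$ (the additive form of Bushell's proposition~\cite{Bushell-hilbertcontraction}). Because $T$ commutes with the translations $x\mapsto x+\mu\mathbf 1$, it induces a continuous self-map $\bar T$ of the hyperplane $H:=\{x:\sum_i x_i=0\}$, namely $\bar T(x)=T(x)-\tfrac1n(\sum_i T(x)_i)\mathbf 1$, still nonexpansive for $\operatorname{osc}$ (which is a genuine norm on $H$). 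A fixed point $u$ of $\bar T$ gives $T(u)=u+\lambda\mathbf 1$ with $\lambda=\tfrac1n\sum_i T(u)_i$, which is the desired conclusion.

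The heart of the proof is to show that the orbit $(\bar T^k(0))_{k\ge0}$ is bounded in $H$, equivalently that $\operatorname{osc}(T^k(0))$ stays bounded; this is the step where strong connectivity enters and is the main obstacle. The mechanism is as follows: for any $x$, setting $\omega=\operatorname{osc}(x)$, $m=\min_i x_i$ and choosing $i^\star$ with $x_{i^\star}=m+\omega$, monotonicity gives $(T(x))_j\ge m+(T(\omega\,e_{i^\star}))_j$ while $(T(x))_j\le m+\omega$, for every $j$; since by definition $(T(te_{i^\star}))_j\to+\infty$ as $t\to\infty$ along each arc $i^\star\to j$ of $G(T)$, a large input oscillation forces the coordinates reachable from $i^\star$ to be lifted far above the current minimum. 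Propagating this along directed paths of $G(T)$ — which, by strong connectivity, join $i^\star$ to every node in at most $n-1$ steps — and using the crude one-step bound $\operatorname{osc}(T^{k+1}(0))\le\operatorname{osc}(T^k(0))+\operatorname{osc}(T(0))$, one shows that $\operatorname{osc}(T^k(0))$ cannot grow without bound: above a threshold $R$ depending only on $T$, the oscillation is contracted over the next $n$ iterations. Turning this heuristic into clean inequalities is the technical crux.

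Finally, given a bound $\operatorname{osc}(\bar T^k(0))\le M$ for all $k$, I would produce a fixed point by perturbation. For $\varepsilon\in(0,1)$ the map $x\mapsto(1-\varepsilon)\bar T(x)$ is a $(1-\varepsilon)$-contraction of $H$, hence has a unique fixed point $z_\varepsilon$; the estimate $\operatorname{osc}(z_\varepsilon-\bar T^k(0))\le(1-\varepsilon)\operatorname{osc}(z_\varepsilon-\bar T^{k-1}(0))+\varepsilon M$, iterated in $k$, yields $\operatorname{osc}(z_\varepsilon)\le 4M$ uniformly in $\varepsilon$. Thus some subsequence $z_{\varepsilon_j}$ converges to a point $z^\star\in H$, and passing to the limit in $z_{\varepsilon_j}=(1-\varepsilon_j)\bar T(z_{\varepsilon_j})$, using the continuity of $\bar T$, gives $\bar T(z^\star)=z^\star$. (Alternatively, the boundedness step can be sharpened to an $\bar T$-invariant closed ball of $H$, on which Brouwer's theorem applies directly.) Taking $u=z^\star$ and $\lambda=\tfrac1n\sum_i T(u)_i$ completes the argument.
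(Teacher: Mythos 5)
Your architecture (quotient to the hyperplane $H$, nonexpansiveness of $T$ for $\mathrm{osc}$, then the $(1-\varepsilon)\bar T$ perturbation plus compactness) is sound, and your first and third paragraphs are essentially correct as written. But the proof has a genuine gap exactly where the hypothesis of strong connectivity must be used: the boundedness of $\mathrm{osc}(T^k(0))$ is only a heuristic, which you yourself flag as ``the technical crux,'' and the mechanism you sketch does not go through as stated. Under your propagation step, ``highness'' moves forward one arc per iteration and does not persist at a node without a self-loop: at time $k+r$ the lifted coordinates are only those joined to $i^\star$ by a path of length \emph{exactly} $r$. For an imprimitive strongly connected graph (say a directed cycle) these sets never cover all nodes at a common time, and in particular need not contain the node where the current minimum sits, so ``above a threshold $R$ the oscillation is contracted over the next $n$ iterations'' does not follow from the inequalities you write. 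There is also a quantitative issue: an arc $i\to j$ only guarantees $T(te_i)_j\to+\infty$ with no rate, so each step degrades ``high'' through an arbitrarily slowly growing function (this part is repairable by composing those functions, but the timing obstruction above is not). A small slip in the same paragraph: $(T(x))_j\le m+\omega$ should read $m+\omega+T(0)_j$, which is harmless.

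Note that the paper does not prove this statement; it quotes it as Theorem~2 of the cited reference of Gaubert and Gunawardena, and the argument there fixes precisely the step you are missing by making the propagation \emph{static} rather than dynamic. With $\mu\mathbf{1}\le T(0)\le\lambda\mathbf{1}$, the slice space $S=\{x:\ \mu+x\le T(x)\le \lambda+x\}$ contains $0$ and is invariant under $T$ (apply $T$ to the defining inequalities and use monotonicity and additive homogeneity), so the whole orbit of $0$ lies in $S$. For $x\in S$ normalized so that $\min_i x_i=0$, one has $x\ge x_ie_i$, hence $T(x_ie_i)_j\le T(x)_j\le \lambda+x_j$ for every $j$; thus along each arc $i\to j$ of $G(T)$ the coordinate $x_j$ is bounded below by a function of $x_i$ tending to $+\infty$. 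Composing these bounds along a path of length at most $n-1$ from the argmax to the argmin bounds $\mathrm{osc}(x)$ uniformly over $S$, with no sensitivity to cyclicity of the graph. Once this lemma replaces your second paragraph, your own concluding perturbation-and-compactness argument (or directly a fixed-point theorem on the compact invariant image of $S$ in the quotient) completes the proof.
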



\begin{theorem}[Corollary 2.5 in \cite{Nussbaum-iteratedMaps}, Theorem 2.3 in \cite{Friedland2011}, Theorem 6.8 in \cite{GaubNuss-uniqueness}] \label{thm:Nussbaum}
Let $T:\mathbb{R}^n \to \mathbb{R}^n$ be a continuously differentiable 
map which is monotone and additively homogeneous
and has an additive eigenvector $p \in \mathbb{R}^n$.
 If $\nabla T(p)$ is irreducible, then the eigenvector is unique up to an additive factor.
 If $\nabla T(p)$ is primitive, then all the orbits defined by 
\vspace{-1ex}
\[
 p_{k+1} =  T(p_k) - \psi(T(p_k))
\]
for a given additively homogeneous function $\psi: \mathbb{R}^n \to \mathbb{R}$ converge to $p- \psi(p)$ 
linearly at a rate equal to 
$\abs{\lambda_2(\nabla T(p))} = \max \{ \abs{\lambda} ; \lambda \in \mathrm{spectrum}(\nabla T(p)), \lambda \not = 1\}$.
\end{theorem}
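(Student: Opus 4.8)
The plan is to pass to the additive picture, reduce everything to the map $\bar T := T - \psi\circ T$ acting on the slice $S := \{x\in\mathbb R^n : \psi(x)=0\}$ (where the iteration $p_{k+1}=T(p_k)-\psi(T(p_k))$ lives after the first step), and exploit two structural facts. First, since $T$ is $C^1$, monotone and additively homogeneous, $\nabla T(x)$ is row-stochastic for every $x$ (nonnegativity from monotonicity, $\nabla T(x)\mathbf 1=\mathbf 1$ from additive homogeneity) and $\nabla T(x+c\mathbf 1)=\nabla T(x)$; in particular $\nabla T(\bar p)=\nabla T(p)=:P$, where $\bar p := p-\psi(p)\mathbf 1$. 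Second, $T$, hence $\bar T$ (because $\omega(\bar Tx-\bar Ty)=\omega(Tx-Ty)$), is nonexpansive for the oscillation seminorm $\omega(z)=\max_i z_i-\min_i z_i$; this uses sup-norm nonexpansiveness (Crandall-Tartar) together with additive homogeneity. Note that $\bar T$ is invariant under adding constants, that $\bar p\in S$ is a fixed point of $\bar T$, and that the additive eigenvectors of $T$ are exactly the translates of the fixed points of $\bar T$ in $S$.

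For the uniqueness claim, let $T(p)=p+\lambda_p\mathbf 1$ and $T(q)=q+\lambda_q\mathbf 1$, set $z:=q-p$ and write the mean-value identity $T(q)-T(p)=Mz$ with $M:=\int_0^1\nabla T(p+sz)\,ds$ a row-stochastic matrix, so $Mz=z+(\lambda_q-\lambda_p)\mathbf 1$. Evaluating this at a coordinate where $z$ is maximal and using $\sum_j M_{ij}z_j\le\max_k z_k$ gives $\lambda_q\le\lambda_p$; the symmetric estimate at a minimal coordinate gives $\lambda_p\le\lambda_q$, so $Mz=z$. Then at a maximal coordinate $i$ we must have $M_{ij}=0$ for every $j$ with $z_j<\max_k z_k$; since $M_{ij}=\int_0^1(\nabla T(p+sz))_{ij}\,ds$ is the integral of a continuous nonnegative function, $(\nabla T(p))_{ij}=0$ there too, i.e.\ the argmax set of $z$ is closed in the graph of $P$. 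Irreducibility of $P$ forces this set to be $\{1,\dots,n\}$, hence $z\in\mathbb R\mathbf 1$.

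For the convergence claim, assume $P$ primitive; then $P$ is irreducible, so by the previous step $\bar p$ is the unique fixed point of $\bar T$ in $S$. Differentiating, $\nabla\bar T(\bar p)=(I-\mathbf 1 w^{\mathsf T})P=:L$ with $w:=\nabla\psi(T\bar p)$, $w^{\mathsf T}\mathbf 1=1$; hence $L\mathbf 1=0$, and since $I-\mathbf 1 w^{\mathsf T}$ is the identity on the quotient $\mathbb R^n/\mathbb R\mathbf 1$, $L$ induces there the same action as $P$, whose spectrum is $\mathrm{spectrum}(P)\setminus\{1\}$; by Perron-Frobenius for the primitive stochastic matrix $P$ this action has spectral radius $|\lambda_2(P)|<1$. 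Working in the quotient (on which $\bar T$ induces a $C^1$ map fixing $[\bar p]$), Ostrowski's theorem on fixed-point iterations yields a neighborhood of $\bar p$ from which $p_k\to\bar p$ with asymptotic linear rate $|\lambda_2(P)|$. To promote this to all orbits: $\omega$-nonexpansiveness and $\bar T(\bar p)=\bar p$ make $k\mapsto\omega(p_k-\bar p)$ non-increasing; since $p_k\in S$, the orbit then stays in a compact subset of $S$ and has a nonempty compact $\omega$-limit set $\Omega\subset S$ on which $\bar T$ is a surjective isometry and $\omega(\cdot-\bar p)$ is constant, say equal to $r$. If $r$ is below the radius of the Ostrowski neighborhood, some iterate $p_k$ enters it and $r=0$; otherwise primitivity (which makes some iterate $\bar T^m$, with $P^m$ strictly positive, a strict $\omega$-contraction near $\bar p$) together with the rigid isometric structure of $\bar T$ on $\Omega$ and uniqueness of the fixed point must be shown to force $\Omega=\{\bar p\}$ as well. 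Either way every orbit converges to $\bar p$ at the local rate $|\lambda_2(P)|$.

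I expect the last bridge, from local to global convergence, to be the main obstacle: the linearization controls the dynamics only near $\bar p$, whereas primitivity is assumed only at $\bar p$, so a contraction estimate cannot simply be carried along an arbitrary orbit. The nonexpansiveness has to be used in an essential way — to confine the orbits to a compact set and to endow the $\omega$-limit set with its rigid isometric structure — and ruling out a nontrivial $\omega$-limit set at positive distance from $\bar p$ is the delicate point handled in the cited works; everything else (the row-stochastic/constant-invariance bookkeeping, the argmax argument for uniqueness, and the spectral computation plus Ostrowski for the local rate) is comparatively routine.
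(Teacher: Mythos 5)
This theorem is imported by the paper from the cited references (Nussbaum, Friedland, Gaubert--Nussbaum); the paper offers no proof of it, so your attempt can only be measured against the substance of those works. Within your argument, the uniqueness part is sound: the mean-value matrix $M=\int_0^1\nabla T(p+sz)\,ds$ is row-stochastic, the equality $Mz=z$ at an argmax coordinate propagates the maximum along the graph of $\nabla T(p)$, and irreducibility forces $z\in\mathbb{R}\mathbf{1}$. The local analysis is also essentially right, with one small caveat: $\psi$ is only assumed additively homogeneous, so you should not write $w=\nabla\psi(T\bar p)$; it is cleaner to do the spectral computation and the Ostrowski step for the induced map on the quotient $\mathbb{R}^n/\mathbb{R}\mathbf{1}$ (where the derivative is the action of $P$ with the eigenvalue $1$ removed) and only use continuity of $\psi$ to transfer convergence of the classes back to convergence of the normalized representatives.

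The genuine gap is the one you flag yourself: passing from local to global convergence. Saying that ``the rigid isometric structure of $\bar T$ on $\Omega$ and uniqueness of the fixed point must force $\Omega=\{\bar p\}$'' is not a proof, and it cannot be, as stated, because primitivity and the strict contraction are available only near $\bar p$ while $\Omega$ sits at positive distance from it. The missing ingredient is a structure theorem for sup-norm nonexpansive maps in finite dimension: every bounded orbit has an $\omega$-limit set consisting of \emph{periodic} points, with period bounded by a constant depending only on $n$ (Nussbaum--Sine, Lemmens--Nussbaum; this is precisely the nontrivial content behind Corollary 2.5 of the cited memoir). Granting it, your own uniqueness step closes the argument cleanly: $\Omega$ is a periodic orbit of some period $q$, so any $y\in\Omega$ satisfies $T^q(y)=y+c\mathbf{1}$, i.e.\ $y$ is an additive eigenvector of $T^q$, which is monotone, additively homogeneous, $C^1$, and has $\nabla (T^q)(p)=P^q$ irreducible (indeed primitive); hence $y\in p+\mathbb{R}\mathbf{1}$, so $\Omega=\{\bar p\}$, every orbit eventually enters the Ostrowski neighborhood, and the asymptotic rate $\abs{\lambda_2(P)}$ follows. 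Without this (or an equivalent argument as in Friedland or Gaubert--Nussbaum), your proposal establishes uniqueness and the local rate but not the claim that \emph{all} orbits converge.
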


These theorems have been stated with more general assumptions, among others
 semi-differentiability~\cite{GaubNuss-uniqueness} and
infinite state space~\cite{Nussbaum-iteratedMaps}. However, for the sake of simplicity,
we present them here in this simpler form. We can also write them theorem in the multiplicative form.

These results give a general framework to prove that the HOTS score and more generally
many web rankings are well defined,
i.e.\ that the score is unique and that the fixed point algorithm (or power algorithm) 
used to compute them indeed converges to the expected ranking.

\section{The ideal HOTS algorithm}
\label{sec:idealhots}

The web graph is a graph constructed from the hyperlink structure of the web.
Each web page is represented by a node and there is an arc between nodes
$i$ and $j$ if and only if page~$i$ points to page~$j$.
We shall denote by $A$ the adjacency matrix of the web graph.

There are two versions of the HOTS algorithm: an ideal version
for strongly connected graphs, i.e.\ for irreducible adjacency matrices,
and an effective version for general graphs that we will study in Section~\ref{sec:effhots}.
The HOTS algorithm for irreducible matrices is designed for the resolution of the following nonlinear network flow problem.
The optimization variable $\rho_{i,j}$ represents the traffic of websurfers on
the hyperlink from page~$i$ to page~$j$. 
\begin{align*}
\max_{\rho \geq 0} & -\sum_{ i,j \in [n]} \rho_{i,j} (\log(\frac{\rho_{i,j}}{A_{i,j}})-1)  \\
&\sum_{j \in [n]} \rho_{i,j} = \sum_{j \in [n]} \rho_{j,i} \; , \; \forall i \in [n]  & (p_i)\\
&\sum_{i,j \in [n]} \rho_{ij} =  1        & (\mu)
\end{align*}
The dual problem consists in minimizing the function $\theta$ on $\mathbb{R}^n \times \mathbb{R}$ where
\begin{align*}
\theta(p,\mu) :=  \sum_{i,j \in [n]} A_{ij} e^{p_i - p_j+\mu}-\mu\enspace .
\end{align*}
We use the convention that $0 \log(0) = 0$ and that $x \log(x/0)= 0$
if $x=0$ and $x \log(x/0)= +\infty$ otherwise.

If $(p,\mu)$ is a minimizer of $\theta$, then the value of $\exp(p_i)$ is interpreted as the temperature of page~$i$, the hotter the better.
We call it the HOTS (Hyperlinked Object Temperature Scale) score.

The ideal HOTS algorithm (Algorithm~\ref{alg:hots}) reduces to the fixed point iterations
for the function $f$ defined by
\begin{equation} \label{eqn:dssFixPoint}
 f(x)=\frac{1}{2} (\log(A^T e^x)-\log(A e^{-x}) ) \enspace.
\end{equation}
Denoting $y_i=e^{p_i}$, we can write it in multiplicative form to spare computing the exponentials and logarithms.
\begin{algorithm}
\caption{Ideal HOTS algorithm~\cite{Tomlin-HOTS}}
\label{alg:hots}
Start with an initial point $y_0 \in \mathbb{R}^n$, $y_0>0$. Given $y^k$, compute $y^{k+1}$ such that
\[
 y^{k+1}_i = \left( \frac{\sum_{j \in [n]} A_{j,i} y^k_j}{\sum_{l \in [n]} A_{i,l}(y_l^k)^{-1}} \right)^{\frac{1}{2}} \enspace .
\]
\end{algorithm}

\begin{algorithm}
\caption{Coordinate descent DSS~\cite{Schneider-dss}}
\label{alg:coorDescentDSS}
Start with an initial point $y^0 \in \mathbb{R}^n$, $y_0>0$.
Given $y^k$, select a coordinate $i \in [n]$ and compute $y^{k+1}$ such that
\vspace{-2ex}
\begin{align*}
 y^{k+1}_i=&\left( \frac{\sum_{j \in [n]} A_{j,i} y^k_j}{\sum_{l \in [n]} A_{i,l} (y_l^k)^{-1}} \right)^{\frac{1}{2}} \\
 y^{k+1}_j=&y^k_j \; , \qquad \forall j \not = i
\end{align*}
\end{algorithm}

We shall compare the HOTS algorithm with Schneider and Zenios's coordinate descent DSS algorithm (Algorithm~\ref{alg:coorDescentDSS}).
This is indeed a coordinate descent algorithm since for every $k$, we have, denoting $p_i=\log(y_i)$, 
\[p^{k+1}_i=\arg\min_{x \in \mathbb{R}} \theta(p^k_1, \ldots, p^k_{i-1}, x, p^k_{i+1}, \ldots, p^k_n) \enspace .\]
Coordinate descent algorithms (Algorithm~\ref{alg:coorDescent}) are designed to solve
\begin{align} \label{eq:coorDescMin}
 \min_{x \in \mathcal{X}} \phi(x)
\end{align}
where $\mathcal{X}$ is a possibly unbounded box 
of $\mathbb{R}^n$ and $\phi$ has the form
$\phi(x)=\psi(E x)+\langle b, x \rangle$,
$\psi$ is a proper closed convex function, $E$ is a $m \times n$ matrix
having no zero row and $b$ is a vector of $\mathbb{R}^n$.
\begin{algorithm}
\caption{Coordinate descent}
\label{alg:coorDescent}
Start with an initial point $x^0 \in \mathbb{R}^n$.
Given $x^k$, select a coordinate $i \in [n]$ and compute $x^{k+1}$ such that
\begin{align*}
 x^{k+1}_i=& \arg\min_{l_i \leq y \leq u_i} \phi(x^k_1, \ldots, x^k_{i-1}, y, x^k_{i+1}, \ldots, x^k_n)\\
 x^{k+1}_j=&x^k_j \; , \qquad \forall j \not = i
\end{align*}
\end{algorithm}
\begin{proposition}[\cite{Tseng-descentCoordinate}] \label{prop:coorDescent}
 Assume that the set of optimal solutions $\mathcal{X}^*$ of \eqref{eq:coorDescMin} is nonempty,
that the domain of $\psi$ is open, that $\psi$ is twice continuously differentiable on its domain
and that $\nabla^2 \psi(Ex)$ is positive definite for all $x \in \mathcal{X}^*$.
Let $(x^k)_k$ be a sequence generated by the coordinate descent algorithm (Algorithm~\ref{alg:coorDescent}),
using the cyclic rule (more general rules are also possible). Then $(x^k)_k$ 
converges at least linearly to an element of $\mathcal{X}^*$.
\end{proposition}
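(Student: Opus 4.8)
The plan is to follow the error-bound approach of Luo and Tseng. First I would exploit the special structure $\phi(x)=\psi(Ex)+\langle b,x\rangle$ together with the positive-definiteness hypothesis to pin down the geometry of $\mathcal{X}^*$: since $\nabla^2\psi$ is positive definite at every optimal point, $\psi$ is strictly convex in a neighborhood of the value $Ex^*$, hence the quantity $Ex$ takes a constant value, say $t^*$, over $\mathcal{X}^*$, and consequently $\langle b,x\rangle$ is constant over $\mathcal{X}^*$ as well. Thus $\mathcal{X}^*=\{x\in\mathcal{X}:\ Ex=t^*,\ \langle b,x\rangle=v^*\}$ is a polyhedron, and in particular it is closed; this is what lets us speak of linear convergence ``to an element of'' $\mathcal{X}^*$.

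Second, I would establish a sufficient-decrease inequality per sweep of the cyclic rule. Exact coordinate minimization gives $\phi(x^k)-\phi(x^{k+1})\ge 0$, and using the one-dimensional optimality conditions together with local strong convexity of $\psi$ in the directions spanned by the columns of $E$ near $t^*$ (which follows from $\nabla^2\psi(t^*)\succ 0$ and continuity), one gets a constant $\sigma>0$ with $\phi(x^k)-\phi(x^{k+1})\ge\sigma\norm{x^{k+1}-x^k}^2$ for $k$ large. Here one must verify that the iterates eventually enter and stay in a compact level set contained in the open domain of $\psi$, so that the Hessian bounds apply; this uses that $\phi$ is nonincreasing along the algorithm, the delicate point being precisely that $\psi$ is only assumed smooth on its (open) domain.

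Third, and this is the main obstacle, I would prove a local Luo--Tseng error bound: there exist $\kappa>0$ and $\epsilon>0$ such that $\mathrm{dist}(x,\mathcal{X}^*)\le\kappa\,\norm{r(x)}$ whenever $\phi(x)\le\phi(x^0)$ and $\norm{r(x)}\le\epsilon$, where $r(x)$ is the residual of one full sweep of exact coordinate minimizations started from $x$. The proof rests on the polyhedral description of $\mathcal{X}^*$ from the first step, on Lipschitz continuity of the relevant gradients on the level set, and on Hoffman's bound for polyhedra: a small residual forces $Ex$ close to $t^*$ and the aggregate KKT violation small, which by Hoffman's lemma forces $x$ close to the polyhedron $\mathcal{X}^*$.

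Finally, I would combine the ingredients in the standard way. The error bound, together with a cost-to-go estimate of the form $\phi(x^k)-\phi^*\le\tau'\norm{x^{k+1}-x^k}^2$ (obtained from convexity, the Lipschitz bounds, and $\mathrm{dist}(x^k,\mathcal{X}^*)\le\kappa\norm{r(x^k)}$), yields $\phi(x^k)-\phi^*\le(\tau'/\sigma)\bigl(\phi(x^k)-\phi(x^{k+1})\bigr)$, i.e.\ $\phi(x^{k+1})-\phi^*\le\beta\bigl(\phi(x^k)-\phi^*\bigr)$ with $\beta=\tau'/(\tau'+\sigma)<1$, so function values converge linearly per sweep. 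Summing the sufficient-decrease inequality then gives $\sum_k\norm{x^{k+1}-x^k}<\infty$ with geometrically decaying tail, so $(x^k)_k$ is Cauchy and converges at least linearly to some $x^\infty$; since $r(x^k)\to 0$ and $r$ is continuous with $r(x)=0$ exactly on $\mathcal{X}^*$, the limit $x^\infty$ belongs to $\mathcal{X}^*$. For the ``more general rules'' remark, the same argument applies once each coordinate is updated within a bounded number of consecutive iterations, replacing ``sweep'' by ``block of consecutive iterations of bounded length.''
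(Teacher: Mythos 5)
This proposition is not proved in the paper at all: it is quoted from \cite{Tseng-descentCoordinate}, so there is no internal proof to compare against. Your sketch reconstructs precisely the Luo--Tseng error-bound argument used in that cited source --- constancy of $Ex$ over $\mathcal{X}^*$ (hence its polyhedral description), a Hoffman-type local error bound, per-sweep sufficient decrease and a cost-to-go estimate combined into a linear rate --- and the technical points you flag (keeping $Ex^k$ in a compact subset of the open domain of $\psi$, extension to almost-cyclic rules) are exactly the auxiliary lemmas supplied there, so your route is essentially the same as the one behind the cited result.
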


We now study the fixed point operator $f$ defined in \eqref{eqn:dssFixPoint}. 

\begin{proposition}[\cite{Tomlin-HOTS}] \label{prop:opt<>fix}
 A vector $p \in \mathbb{R}^n$ is a fixed point of $f$ defined in \eqref{eqn:dssFixPoint} if and only if the couple
$(p,\mu)$ with $\mu=-\log(\sum_{i,j \in [n]}A_{ij} e^{p_i - p_j})$ is a minimizer of the dual function. Moreover, in this case, denoting 
$D=\diag(\exp(p))$, $e^{\mu} DA D^{-1}$ is a maximizer of the network flow problem.
\end{proposition}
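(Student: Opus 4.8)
The plan is to connect the fixed-point equation of $f$ with the first-order optimality conditions of the (convex) dual problem $\min_{(p,\mu)} \theta(p,\mu)$, and then invoke Lagrangian duality to identify the associated primal optimizer. First I would write down the stationarity conditions for $\theta$. Differentiating $\theta(p,\mu)=\sum_{i,j}A_{ij}e^{p_i-p_j+\mu}-\mu$ with respect to $\mu$ gives $\sum_{i,j}A_{ij}e^{p_i-p_j+\mu}=1$, which is exactly the stated formula $\mu=-\log(\sum_{i,j}A_{ij}e^{p_i-p_j})$. Differentiating with respect to $p_i$ gives, after factoring out $e^\mu$ (which is positive, hence can be dropped), the condition $\sum_{j}A_{ij}e^{p_i-p_j}=\sum_{j}A_{ji}e^{p_j-p_i}$ for every $i$, i.e.\ $e^{2p_i}\sum_j A_{ij}e^{-p_j}=\sum_j A_{ji}e^{p_j}$. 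Taking logarithms and rearranging yields $p_i=\tfrac12\big(\log(\sum_j A_{ji}e^{p_j})-\log(\sum_j A_{ij}e^{-p_j})\big)$, which is precisely $p=f(p)$ with $f$ as in \eqref{eqn:dssFixPoint}. Since $\theta$ is convex (a sum of exponentials of affine forms, minus a linear term), stationarity is equivalent to global minimality, so a fixed point of $f$ together with the prescribed $\mu$ is a minimizer of $\theta$, and conversely; this establishes the equivalence.

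For the second assertion I would verify that the network flow problem is a convex program (concave objective, linear constraints) satisfying a Slater-type condition — here one can exhibit a strictly positive feasible $\rho$, e.g.\ a suitably scaled version of the stationary flow, using irreducibility of $A$ — so that strong duality holds and the dual optimum is attained precisely at a minimizer $(p,\mu)$ of $\theta$. Then I would recover the primal optimizer from the KKT conditions: the stationarity condition in $\rho_{i,j}$ of the Lagrangian $-\sum_{i,j}\rho_{i,j}(\log(\rho_{i,j}/A_{ij})-1)+\sum_i p_i(\sum_j\rho_{j,i}-\sum_j\rho_{i,j})+\mu(1-\sum_{i,j}\rho_{i,j})$ reads $-\log(\rho_{i,j}/A_{ij})+p_i-p_j-\mu=0$ (with the usual convention handling $A_{ij}=0$), whence $\rho_{i,j}=A_{ij}e^{p_i-p_j+\mu}$. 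Writing $D=\diag(\exp(p))$, this is exactly the $(i,j)$ entry of $e^\mu D A D^{-1}$, so that matrix is the maximizer of the flow problem.

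The main obstacle, and the place where some care is needed, is the rigorous justification of strong duality and attainment for the network flow problem: the feasible set is unbounded, the objective involves the extended-real-valued convention $x\log(x/0)$, and one must check that the constraint qualification holds and that the KKT conditions are both necessary and sufficient in this setting. I expect that the cleanest route is to note that the problem is an entropy maximization over a polyhedral flow-conservation set, invoke a standard strong-duality theorem for such problems (Fenchel or Lagrangian duality for separable convex programs with affine constraints), and then observe that the derived $\rho$ is automatically feasible (flow conservation is inherited from the stationarity in $p$, and normalization from stationarity in $\mu$) and satisfies complementary slackness trivially, hence is optimal. The remaining steps — the differentiation and the logarithmic rearrangement — are routine.
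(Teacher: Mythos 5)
Your first half is exactly the paper's argument: since $\theta$ is convex and differentiable, minimizers are characterized by a vanishing gradient, the $\mu$-derivative gives the stated formula for $\mu$, and the $p$-derivative (after dropping the positive factor $e^{\mu}$) rearranges to $p=f(p)$. For the primal assertion you take a heavier route than the paper. You recover $\rho_{i,j}=A_{ij}e^{p_i-p_j+\mu}=(e^{\mu}DAD^{-1})_{i,j}$ from Lagrangian stationarity and then worry about justifying strong duality, attainment and a Slater-type condition; the paper avoids all of that by a direct certificate: the explicit matrix $e^{\mu}DAD^{-1}$ is an admissible circulation (row sums equal column sums is exactly the fixed-point equation, and the normalization is exactly the formula for $\mu$), and its primal objective value equals $\theta(p,\mu)$ (since $\log(\rho_{i,j}/A_{ij})=p_i-p_j+\mu$ and the conservation and normalization constraints make the sum collapse to $1-\mu$), so weak duality alone shows it is a primal maximizer and $(p,\mu)$ a dual minimizer. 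In other words, the obstacle you single out as the delicate step is not actually needed: once the candidate $\rho$ is written down, feasibility plus equality of primal and dual values (or, equivalently, the sufficiency of KKT for a concave objective with affine constraints, which requires no constraint qualification) closes the argument. Your ending remark that $\rho$ is automatically feasible already contains this observation, so your proof is correct once you drop the appeal to strong duality and simply invoke weak duality with the exhibited feasible point.
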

\begin{proof}
As $\theta$ is convex and differentiable, a couple $(p,\mu)$ is a minimizer if and only if it cancels the gradient.
 $\frac{\partial \theta}{\partial \mu}(p,\mu) =  \sum_{i,j \in [n]} A_{ij} e^{p_i - p_j} e^{\mu}-1$, so we have the expression of the optimal $\mu$
as a function of $p$. To conclude, we remark that
\[
  \frac{\partial \theta}{\partial p_k}(p,\mu)=\left (-\sum_{i \in [n]}A_{i,k} e^{p_i-p_k} + \sum_{j\in [n]} A_{k,j} e^{p_k-p_j} \right )e^{\mu}=0
\]
is equivalent to $f(p)=p$.
To get back to the primal problem, we remark that the primal cost of $e^{\mu}DA D^{-1}$
is equal to the dual cost of $(p,\mu)$ and that it is an admissible circulation.
\end{proof}


\begin{proposition} \label{prop:M-AH}
 The map $f$ defined in \eqref{eqn:dssFixPoint} is monotone, additively homogeneous (Definition~\ref{defn:monoHomA}).
\end{proposition}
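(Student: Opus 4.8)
The plan is to read off both properties directly from the closed form \eqref{eqn:dssFixPoint}, by writing $f$ as a composition of elementary order-preserving operations: the coordinatewise exponential, left multiplication by the nonnegative matrix $A^{T}$ (resp.\ $A$), the coordinatewise logarithm, negation, and an affine combination.

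First I would check additive homogeneity. Fixing $\lambda \in \mathbb{R}$ and $p \in \mathbb{R}^{n}$ and using that $\exp$ acts elementwise, I would note that $e^{\lambda + p} = e^{\lambda}\, e^{p}$, so that $A^{T} e^{\lambda + p} = e^{\lambda}\,(A^{T} e^{p})$ and hence $\log(A^{T} e^{\lambda + p}) = \lambda + \log(A^{T} e^{p})$, where $\lambda$ is added to each coordinate. Symmetrically, $A e^{-(\lambda + p)} = e^{-\lambda}\,(A e^{-p})$ gives $\log(A e^{-(\lambda + p)}) = -\lambda + \log(A e^{-p})$. Substituting into \eqref{eqn:dssFixPoint}, the two scalar contributions combine as $\tfrac{1}{2}(2\lambda) = \lambda$, which yields $f(\lambda + p) = \lambda + f(p)$.

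Next I would check monotonicity. Assume $p \leq q$ coordinatewise. Then $e^{p} \leq e^{q}$ because $\exp$ is increasing; $A^{T} e^{p} \leq A^{T} e^{q}$ because $A^{T} \geq 0$; and $\log(A^{T} e^{p}) \leq \log(A^{T} e^{q})$ because $\log$ is increasing. For the second term, $p \leq q$ implies $-p \geq -q$, hence $e^{-p} \geq e^{-q}$, so $A e^{-p} \geq A e^{-q}$ and therefore $-\log(A e^{-p}) \leq -\log(A e^{-q})$. Adding the two inequalities and dividing by $2$ gives $f(p) \leq f(q)$.

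The only point needing care — and it is not a genuine obstacle — is that the two logarithms be finite, i.e.\ that $A^{T} e^{x}$ and $A e^{-x}$ have strictly positive entries for every $x \in \mathbb{R}^{n}$; this holds because in the ideal setting $A$ is irreducible, hence has no zero row and no zero column, so $f$ is a well-defined self-map of $\mathbb{R}^{n}$. I expect the argument to take only a few lines; its role is to license the application of the nonlinear Perron--Frobenius results of Section~\ref{sec:nonlinPerronFrobeniusTheory} in the convergence analysis that follows.
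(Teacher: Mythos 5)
Your proof is correct and follows essentially the same route as the paper's: additive homogeneity from $e^{\lambda+p}=e^{\lambda}e^{p}$ (equivalently $\log(e^{\lambda})=\lambda$) and monotonicity from the fact that $\exp$, $\log$ and multiplication by the nonnegative matrices $A$, $A^{T}$ preserve order, with the sign flip in the second term handled by the outer negation. Your added remark on well-definedness (no zero row or column of $A$ in the irreducible case) is a harmless extra check not spelled out in the paper.
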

\begin{proof}
For all real $\lambda$ and for all vectors $p$, $q$ such that $p \leq q$, 
$f(\lambda + p) = \lambda+ f(p)$ ($\log(e^\lambda)=\lambda$) and $f(p) \leq f(q)$ ($\log$ and $\exp$ are increasing functions).
\end{proof}

The following result gives the conditions for the existence and uniqueness of the ideal HOTS vector.
\begin{theorem}[\cite{EavesSchneider}] \label{thm:Schneider}
 There exists $v \in \mathbb{R}^n$ such that $f(v)=v$ and $\sum_{i \in [n]} v_i=0$
if and only if $A$ has a diagonal similarity scaling if and only if $A$ is completely reducible.

If in addition $A$ is irreducible, then this vector is unique.
\end{theorem}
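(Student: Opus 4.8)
The plan is to split the statement into its two \emph{if and only if}'s and then deal with uniqueness separately. First I would translate the fixed point equation into the language of matrix balancing. Writing $D$ for the positive diagonal matrix with diagonal entries $e^{v_i}$, one has $(DAD^{-1})_{i,j}=e^{v_i}A_{i,j}e^{-v_j}$, so the $i$-th row sum of $DAD^{-1}$ equals $e^{v_i}(Ae^{-v})_i$ and the $i$-th column sum equals $e^{-v_i}(A^{T}e^{v})_i$. These coincide for every $i$ exactly when $e^{2v_i}(Ae^{-v})_i=(A^{T}e^{v})_i$ for every $i$, which after taking logarithms is precisely $f(v)=v$. Thus fixed points of $f$ are in bijection, up to an overall positive scalar on $D$ (which leaves $DAD^{-1}$ unchanged), with the positive diagonal matrices $D$ for which $DAD^{-1}$ is line-sum-symmetric. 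Since $f$ is additively homogeneous (Proposition~\ref{prop:M-AH}), its fixed points occur in families $\{v+c\,\mathbf{1}:c\in\mathbb{R}\}$ (with $\mathbf{1}$ the all-ones vector), so imposing $\sum_i v_i=0$ merely selects one representative per family; in particular a normalized fixed point exists if and only if $A$ admits a diagonal similarity scaling to a balanced matrix, which is the first asserted equivalence.

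For the equivalence with complete reducibility — $A$ being permutation-similar to a direct sum of irreducible blocks — I would invoke the characterization of Eaves, Hoffman, Rothblum and Schneider~\cite{EavesSchneider}. A direct argument runs as follows. For necessity, suppose $DAD^{-1}$ is line-sum-symmetric but the condensation of the digraph of $A$ has at least one arc; choose a strongly connected component $S$ that is a sink of the condensation, so no arc leaves $S$. Summing the balance equations over $i\in S$, the terms indexed by arcs internal to $S$ cancel on both sides and one is left with $\sum_{i\in S,\,j\notin S}(DAD^{-1})_{j,i}=\sum_{i\in S,\,j\notin S}(DAD^{-1})_{i,j}=0$; hence no arc enters $S$ either, $S$ is an isolated component, and removing it and repeating shows $A$ has no inter-component arcs, i.e.\ it is completely reducible. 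For sufficiency, a diagonal similarity scaling of a direct sum of matrices is obtained by scaling each block, so it suffices to balance an irreducible matrix; and for irreducible $A$ the dual objective $\theta$ is convex and coercive modulo its translation invariance $\theta(p+c\,\mathbf{1},\mu)=\theta(p,\mu)$ (along any direction $q\in\mathbf{1}^{\perp}$ with $\|q\|\to\infty$, strong connectivity provides an arc $A_{i,j}>0$ with $q_i-q_j\to+\infty$, and optimizing over $\mu$ forces $\theta\to+\infty$), so $\theta$ attains its minimum, which by Proposition~\ref{prop:opt<>fix} is a fixed point of $f$.

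It remains to prove uniqueness when $A$ is irreducible. I would argue by strict convexity of $\theta$ on $\mathbf{1}^{\perp}\times\mathbb{R}$. Up to the positive factor $e^{\mu}$, the block of the Hessian of $\theta$ in the $p$ variables is the Laplacian of the weighted undirected graph with edge weights $w_{ij}=A_{i,j}e^{p_i-p_j}+A_{j,i}e^{p_j-p_i}$; a short computation shows that a null direction $(q,s)$ of the full Hessian forces $q_i-q_j+s=0$ on every arc of $A$, and summing around a directed cycle gives $s=0$ and then $q$ constant, so by strong connectivity the only null direction with $q\in\mathbf{1}^{\perp}$ is $(0,0)$. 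Therefore $\theta$ is strictly convex on $\mathbf{1}^{\perp}\times\mathbb{R}$, its minimizer with $\sum_i p_i=0$ is unique, and by the bijection of the first paragraph (or directly by Proposition~\ref{prop:opt<>fix}) the normalized fixed point $v$ is unique. Alternatively one may apply Theorem~\ref{thm:Nussbaum}: $f$ is continuously differentiable, monotone and additively homogeneous, $\nabla f(v)$ is row-stochastic, and its nonzero pattern inherits the connectivity of the union of the digraphs of $A$ and $A^{T}$, which is strongly connected when $A$ is irreducible, so the additive eigenvector is unique up to an additive constant.

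The step I expect to be the main obstacle is the equivalence with complete reducibility, in particular its necessity direction, where one must show via the cut over a sink strongly connected component that line-sum-symmetry precludes every arc between distinct components; the cleanest option in the paper is simply to cite~\cite{EavesSchneider}, whose content is precisely this characterization, and to spend the effort instead on the correspondence with $f$ and on the uniqueness argument.
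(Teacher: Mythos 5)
The paper does not actually prove this statement: it is imported as a known result of Eaves, Hoffman, Rothblum and Schneider \cite{EavesSchneider}, so there is no internal proof to compare yours against. Taken on its own, your argument is essentially correct and self-contained, and it is consistent with the paper's toolkit: the dictionary between fixed points of $f$ and line-sum-symmetric scalings (row sum $e^{v_i}(Ae^{-v})_i$ versus column sum $e^{-v_i}(A^{T}e^{v})_i$ of $DAD^{-1}$) is exactly the computation behind Proposition~\ref{prop:opt<>fix}; the sink-component cut argument for necessity of complete reducibility is the standard one and is sound; and both of your uniqueness routes work --- strict convexity of the dual on $\{q:\sum_i q_i=0\}\times\mathbb{R}$ via the weighted-Laplacian Hessian (a null direction $(q,s)$ forces $q_i-q_j+s=0$ on every arc, a directed cycle kills $s$, strong connectivity makes $q$ constant), or irreducibility of the row-stochastic matrix $\nabla f(v)$ together with Theorem~\ref{thm:Nussbaum}, which is precisely the mechanism the paper uses later in Theorem~\ref{thm:convidealHots}. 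So where the paper buys the result by citation, you give a direct proof whose ingredients (dual convexity, Proposition~\ref{prop:opt<>fix}, nonlinear Perron--Frobenius) the paper already has on hand.

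Two small points should be tightened. First, $f$ is only defined when $A$ has no zero row or column; a completely reducible $A$ may contain a $1\times 1$ zero block (an isolated node), in which case $A^{T}e^{v}$ or $Ae^{-v}$ has a vanishing entry and the fixed-point formulation degenerates --- this is a blemish of the statement as imported rather than of your argument, but your ``bijection'' paragraph uses it silently, so either exclude that case or handle it separately. Second, the coercivity claim should be made for sequences rather than directions, since the witnessing arc may vary with the point; the clean fix is the inequality $\max\{q_i-q_j : A_{ij}>0\}\geq (\max_k q_k-\min_k q_k)/n$, valid for strongly connected $A$ by following a path from the argmax to the argmin node, which together with $\sum_k q_k=0$ and $\lVert q\rVert\to\infty$ forces $\log\sum_{i,j}A_{ij}e^{q_i-q_j}\to+\infty$. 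With these repairs the proof is complete; alternatively, as you suggest, one can keep the citation to \cite{EavesSchneider} for the scaling/complete-reducibility equivalence and retain your argument only for the correspondence with $f$ and for uniqueness.
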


\begin{corollary}[\cite{Schneider-dss}] \label{prop:Schneider-dss}
 If $A$ is completely reducible, coordinate descent DSS (Algorithm~\ref{alg:coorDescentDSS})
converges linearly to a vector $v$ such that $\diag(v) A \diag(v)^{-1}$ is scaled.
\end{corollary}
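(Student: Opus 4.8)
The plan is to recognize coordinate descent DSS as cyclic coordinate descent applied to the smooth convex function $\phi(p) := \sum_{i,j \in [n]} A_{i,j}\, e^{p_i - p_j}$ on $\mathbb{R}^n$, and then to invoke Tseng's Proposition~\ref{prop:coorDescent}. First I would record that, since $\theta(p,\mu) = e^{\mu}\phi(p) - \mu$, partial minimization in $\mu$ gives the optimal $\mu^*(p) = -\log\phi(p)$ with $\theta(p,\mu^*(p)) = 1 + \log\phi(p)$; hence minimizing the dual $\theta$ over $(p,\mu)$ amounts to minimizing $\log\phi$, and therefore $\phi$, over $p$, and $\phi$ and $\log\phi$ have the same coordinatewise minimizers. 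Next I would check that exact minimization of $\phi$ in the coordinate $p_i$ is exactly the DSS update: assuming without loss of generality that $A_{i,i}=0$ (a self-loop merely adds a constant to $\phi$), the part of $\phi$ depending on $x = p_i$ is $x \mapsto e^{x}\sum_{k} A_{i,k}\,e^{-p_k} + e^{-x}\sum_{j} A_{j,i}\,e^{p_j}$, whose stationary point is $e^{p_i} = \big(\sum_j A_{j,i}\,e^{p_j}/\sum_k A_{i,k}\,e^{-p_k}\big)^{1/2}$, which is the iteration of Algorithm~\ref{alg:coorDescentDSS} in the variable $y = e^{p}$. Thus the DSS iterates $(y^k)$ are the images under $\exp$ of the cyclic coordinate descent iterates $(p^k)$ for $\phi$.

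Then I would put $\phi$ in the form required by Proposition~\ref{prop:coorDescent}, namely $\phi(p) = \psi(Ep) + \langle b, p\rangle$ with $b = 0$: take $E$ to be the matrix with one row $e_i - e_j$ per arc $(i,j)$ such that $A_{i,j}>0$ (no row is zero, since $A_{i,i}=0$), and $\psi(z) = \sum_{(i,j)\,:\,A_{i,j}>0} A_{i,j}\,e^{z_{ij}}$, so that $\psi(Ep)=\phi(p)$. This $\psi$ is proper, closed and convex, with open domain $\mathbb{R}^m$, is analytic there, and has Hessian $\nabla^2\psi(z) = \diag\big(A_{i,j}\,e^{z_{ij}}\big)$, which is positive definite at every point because $A_{i,j}>0$ on the support; in particular the positive-definiteness hypothesis of Proposition~\ref{prop:coorDescent} holds on the entire optimal set $\mathcal{X}^*$. (This is exactly why Tseng's theorem should be applied to $\phi$ and not to $\log\phi$: the $\psi$ attached to the latter has Hessian of the form $\diag(q)-qq^{T}$, which is only positive semidefinite, with $\mathbf{1}$ in its kernel.) The only remaining hypothesis is that $\mathcal{X}^*$ be nonempty, and this is where complete reducibility of $A$ enters: by Theorem~\ref{thm:Schneider} there is $v\in\mathbb{R}^n$ with $f(v)=v$, and then by Proposition~\ref{prop:opt<>fix} the couple $\big(v,-\log\sum_{i,j}A_{i,j}e^{v_i-v_j}\big)$ minimizes $\theta$, so $v\in\mathcal{X}^*$.

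With all the hypotheses verified, Proposition~\ref{prop:coorDescent} gives that $(p^k)$ converges at least linearly to some $v^*\in\mathcal{X}^*$. To conclude I would identify $\mathcal{X}^*$ with the fixed point set of $f$: $\phi$ being convex and differentiable, $v^*$ minimizes $\phi$ iff $\nabla\phi(v^*)=0$, and since $\partial_k\phi(p) = \sum_j A_{k,j}e^{p_k-p_j} - \sum_i A_{i,k}e^{p_i-p_k}$, this is equivalent to $f(v^*)=v^*$ (the same computation as in the proof of Proposition~\ref{prop:opt<>fix}). Writing $D=\diag(e^{v^*})$, Proposition~\ref{prop:opt<>fix} then shows that $DAD^{-1}$ (equivalently, after normalization, the flow maximizer $e^{\mu^*}DAD^{-1}$) is a balanced, i.e.\ scaled, matrix; and the DSS iterates $y^k=e^{p^k}$ converge linearly to $e^{v^*}$, which is the vector called $v$ in the statement.

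The routine parts are the two derivative computations and the verification that $\psi$ is proper, closed and convex with open domain; none of these is delicate. The one point that requires care --- and where a naive reformulation fails --- is the choice of the objective fed to Tseng's theorem: one must use $\phi$ itself rather than its logarithm, so that $\nabla^2\psi$ is genuinely positive definite, which in turn makes the harmless normalization $A_{i,i}=0$ necessary so that $E$ has no zero row. With that arranged, the statement follows by combining Theorem~\ref{thm:Schneider}, Proposition~\ref{prop:opt<>fix} and Proposition~\ref{prop:coorDescent}.
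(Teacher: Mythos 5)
Your proof is correct and follows essentially the route the paper intends: the paper states this corollary by citation and proves the analogous effective-case result (Proposition~\ref{prop:effHotsCoorDescent}) exactly as you do, by recognizing the update as exact cyclic coordinate minimization of the dual and verifying Tseng's hypotheses (Proposition~\ref{prop:coorDescent}) with the arc-indexed exponential $\psi$, nonemptiness of the optimal set coming from Theorem~\ref{thm:Schneider} via Proposition~\ref{prop:opt<>fix}. Your elimination of $\mu$ and the zero-diagonal normalization (so that $E$ has no zero row) are sensible refinements consistent with the paper's implicit conventions.
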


To prove the convergence of the ideal HOTS algorithm (Algorithm~\ref{alg:hots}), we
use the nonlinear Perron-Frobenius theory, the main theorems of which are stated in Section~\ref{sec:nonlinPerronFrobeniusTheory}.

\begin{theorem} \label{thm:convidealHots}
Let $f$ be the map defined in \eqref{eqn:dssFixPoint}. If $A$ is irreducible and $A+A^T$ is primitive, then there exists a vector $v$ and such that $f(v)=v$  and for all $x \in \mathbb{R}^n$,
\[
 \limsup_{k \to \infty} \norm{f^{k+1}(x)-v}^{1/k} \leq \abs{\lambda_2(P)} = \max \{ \abs{\lambda} ; \lambda \in \mathrm{spectrum}(P), \lambda \not = 1\}
\]
where $P=\frac{1}{2}\left (\diag(A^T e^{v})^{-1} A^T \diag(e^v) + \diag(A e^{-v})^{-1} A \diag(e^{-v}) \right )$.
In particular, the ideal HOTS algorithm (Algorithm~\ref{alg:hots}) converges linearly at rate $\abs{\lambda_2(P)}$.
\end{theorem}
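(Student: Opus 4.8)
The plan is to reduce Theorem~\ref{thm:convidealHots} to the abstract convergence result of Theorem~\ref{thm:Nussbaum} applied to the map $f$ from \eqref{eqn:dssFixPoint}. First I would record that $f$ is continuously differentiable on all of $\mathbb{R}^n$ (it is a composition of $\exp$, $\log$, and linear maps, with $A^Te^x>0$ and $Ae^{-x}>0$ coordinatewise since $A$ is irreducible hence has no zero row or column), and that by Proposition~\ref{prop:M-AH} it is monotone and additively homogeneous. Next I would establish the existence of an additive eigenvector. Since $A$ is irreducible, $A+A^T$ is irreducible as well, and one checks that the graph $G(f)$ in the sense of the earlier definition coincides with the graph of $A+A^T$ (an arc $i\to j$ appears in $G(f)$ precisely when $\lim_{t\to\infty} f(te_i)_j=+\infty$, which happens iff $A^T_{j,i}>0$ or $A_{j,i}>0$, i.e.\ iff $(A+A^T)_{j,i}>0$); hence $G(f)$ is strongly connected and Theorem~\ref{thm:GaubGun} yields $u$ and $\lambda$ with $f(u)=\lambda+u$. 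Because $f$ is additively homogeneous of degree $1$ in the additive sense but actually satisfies $f(\lambda\mathbf{1}+p)=\lambda\mathbf 1+f(p)$ with the \emph{same} additive constant, applying $\sum_i$ to $f(u)=\lambda+u$ does not immediately pin down $\lambda$; instead I would invoke Theorem~\ref{thm:Schneider}: irreducibility implies $A$ is completely reducible, so there is a fixed point $v$ with $f(v)=v$, i.e.\ $\lambda=0$, and this $v$ is unique up to an additive constant.

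The heart of the argument is the spectral analysis of $\nabla f(v)$. A direct computation of the Jacobian of \eqref{eqn:dssFixPoint} at a point $x$ gives, entrywise,
\[
\nabla f(x) = \frac12\Bigl( \diag(A^Te^x)^{-1} A^T \diag(e^x) + \diag(Ae^{-x})^{-1} A \diag(e^{-x}) \Bigr),
\]
which evaluated at $x=v$ is exactly the matrix $P$ in the statement. I would then check that $P$ is a nonnegative \emph{stochastic} matrix: each of the two terms $\diag(A^Te^v)^{-1}A^T\diag(e^v)$ and $\diag(Ae^{-v})^{-1}A\diag(e^{-v})$ has row sums equal to $1$ by construction of the diagonal normalizations, so $P\mathbf 1=\mathbf 1$ and $1$ is an eigenvalue of $P$ with eigenvector $\mathbf 1$ (consistent with additive homogeneity: $\nabla f(v)\mathbf 1=\mathbf 1$). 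Moreover $P$ has the same zero/nonzero pattern as $A^T+A$ up to the positive diagonal scalings, hence the directed graph of $P$ equals that of $A+A^T$; since $A+A^T$ is assumed primitive, $P$ is primitive. Therefore $1$ is a simple eigenvalue of $P$ and all other eigenvalues have modulus strictly less than $1$, so $\nabla f(v)=P$ is in particular irreducible and primitive in the sense required by Theorem~\ref{thm:Nussbaum}.

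Finally I would apply Theorem~\ref{thm:Nussbaum} with $\psi\equiv 0$: the orbits $p_{k+1}=f(p_k)$ converge to the fixed point $v$ (uniqueness up to an additive constant, together with the primitivity, forces convergence to the representative $v$, or I would simply note that the iterates $f^k(x)$ without normalization converge since $P\mathbf 1=\mathbf 1$ controls the constant-direction component) at linear rate $\abs{\lambda_2(P)}=\max\{\abs\lambda:\lambda\in\mathrm{spectrum}(P),\ \lambda\neq 1\}$, which is exactly the claimed bound on $\limsup_k\norm{f^{k+1}(x)-v}^{1/k}$. The main obstacle I anticipate is the bookkeeping around additive homogeneity: Theorem~\ref{thm:Nussbaum} is phrased for the \emph{normalized} orbit $p_{k+1}=T(p_k)-\psi(T(p_k))$, and one must argue that the \emph{un}normalized HOTS iteration of Algorithm~\ref{alg:hots} converges without renormalization. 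This works here because $f(v)=v$ exactly (the additive eigenvalue is $0$), so the component of $f^k(x)-v$ along $\mathbf 1$ is constant in $k$ provided it starts at $0$, and in general one picks the correct additive representative of $v$; the transverse component decays at rate $\abs{\lambda_2(P)}$ by the linearization. I would make this precise by splitting $\mathbb{R}^n = \mathbb{R}\mathbf 1 \oplus H$ where $H$ is a $P$-invariant complement on which the spectral radius of $P$ is $\abs{\lambda_2(P)}$, and then the standard linearized-contraction argument on $H$ (using $C^1$-smoothness of $f$) gives the stated $\limsup$ estimate.
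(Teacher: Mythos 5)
Your argument is essentially the paper's proof: existence of the fixed point $v$ from Theorem~\ref{thm:Schneider}, the computation $\nabla f(v)=P$, the observation that $P$ has the same zero pattern as $A+A^T$ and is therefore primitive, and the conclusion via Theorem~\ref{thm:Nussbaum}, with the reduction between the multiplicative iterates of Algorithm~\ref{alg:hots} and the additive iterates of $f$. The only caveats are minor: $\psi\equiv 0$ is not additively homogeneous in the sense required by Theorem~\ref{thm:Nussbaum}, and the component of $f^k(x)-v$ along $\mathbf{1}$ is not exactly conserved by the nonlinear map, but your proposed splitting $\mathbb{R}^n=\mathbb{R}\mathbf{1}\oplus H$ together with the geometrically summable drift of the $\mathbf{1}$-component (the same device the paper uses later in the proof of Theorem~\ref{thm:convHots}) repairs this, a point the paper's own proof leaves implicit.
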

\begin{proof}
The iterates of the fixed point iteration defined by $p^0=x$ and $p^{k+1}=f(p^k)$ 
verify $p^k=\log(y^k)$ where $y^k$ is the $k$th iterate of the ideal HOTS algorithm (Algorithm~\ref{alg:hots})
started with $y^0=\exp(x)$. Hence, by continuous differentiability of the exponential, 
the rate of convergence of both versions of the algorithm is the same. 
By Theorem~\ref{thm:Schneider}, as $A$ is irreducible, $f$ has a fixed point $v$ and $\diag(\exp(v))$ is solution of the matrix balancing problem associated to $A$.
Now easy calculations show that $\nabla f =P$. As $P$ has the same pattern as $A+A^T$, $P$ is primitive if and only if $A+A^T$ is.
The result follows from Theorem~\ref{thm:Nussbaum}.
\end{proof}

This theorem shows that the HOTS vector for the irreducible case is
well defined if $A$ is irreducible and
that if $A+A^T$ is primitive, then the ideal HOTS algorithm~\eqref{eqn:dssFixPoint} converges linearly to the HOTS vector.

\begin{remark}
The ideal HOTS algorithm (Algorithm~\ref{alg:hots}) requires a primitivity assumption in order to converge
 that coordinate descent DSS (Algorithm~\ref{alg:coorDescentDSS}) does not require.
On the other hand, the convergence rate of coordinate descent DSS is not explicitly given
while Theorem~\ref{thm:convidealHots} gives the convergence rate of ideal HOTS.
\end{remark}

\begin{remark} \label{rem:diag}
Changing the diagonal of $A$
does not change the optimal scaling, so we can choose a nonzero diagonal for $A$ in the preceding theorem.
This is useful when $A$ is irreducible but not primitive.
\end{remark}


The fixed point equation defining the ideal HOTS vector is
\[
 y_i = \left( \frac{\sum_j A_{j,i} y_j}{\sum_k A_{i,k} y_k^{-1}} \right)^{\frac{1}{2}} \enspace .
\]
Indeed, the page~$i$ has a good HOTS score if it is linked to by pages with a good HOTS score
and if it does not link to pages with a bad HOTS score.

We thus introduce the following set of fixed point ranking algorithms.
\begin{algorithm}
\caption{Deformed HOTS algorithm}
\label{alg:defHots}
Let $\alpha, \beta \geq0$ such that $\alpha+\beta=1$ and let $g:\mathbb{R}^n_{+} \to \mathbb{R}^n_{+}$ defined for all $i$ by
 \[
 g_i(x)=\frac{(\sum_j A_{j,i} x_j)^\alpha}{(\sum_k A_{i,k} x_k^{-1})^\beta}  \enspace .
\]
Given an initial point $d_0 \in \mathbb{R}^n$ and a norm $\norm{\cdot}$, the deformed HOTS algorithm is defined by
\vspace{-2ex}
\[
 d^{k+1}=\frac{g(d^k)}{\norm{g(d^k)}}
\]
\end{algorithm}

\begin{proposition}
Let $\alpha, \beta \geq 0$ such that $\alpha+\beta=1$. 
If $A$ is irreducible and $\alpha A+\beta A^T$ is primitive, then the deformed HOTS algorithm (Algorithm~\ref{alg:defHots})
converges linearly to a positive vector.
\end{proposition}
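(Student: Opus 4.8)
The plan is to mimic the proof of Theorem~\ref{thm:convidealHots}, replacing the appeal to Eaves--Schneider's matrix balancing theorem (which has no analogue here) by the existence result of nonlinear Perron--Frobenius theory, Theorem~\ref{thm:GaubGun}. First I would pass to additive (``logarithmic glasses'') coordinates: set $h(p)=\log g(e^p)$, that is,
\[
 h_i(p) = \alpha \log((A^T e^p)_i) - \beta \log((A e^{-p})_i) \enspace .
\]
Because $A$ is irreducible, every node has at least one in-neighbour and one out-neighbour, so $(A^T e^p)_i>0$ and $(A e^{-p})_i>0$ for all $i$ and all $p\in\mathbb{R}^n$; hence $h$ is well defined and $C^1$ on $\mathbb{R}^n$. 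It is clearly monotone and additively homogeneous (the one-line argument of Proposition~\ref{prop:M-AH}, using $\alpha+\beta=1$). Moreover, writing $p^k=\log d^k$, the iteration of Algorithm~\ref{alg:defHots} becomes $p^{k+1}=h(p^k)-\psi(h(p^k))$ with $\psi(q)=\log\norm{e^q}$, which is additively homogeneous since $\norm{\cdot}$ is a norm. Thus, once we have an additive eigenvector of $h$, the convergence statement will follow from Theorem~\ref{thm:Nussbaum}, exactly as in Theorem~\ref{thm:convidealHots}.

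Next I would establish existence of an additive eigenvector through Theorem~\ref{thm:GaubGun}, i.e.\ show that $G(h)$ is strongly connected. For $\alpha>0$ this is easy: if $A_{i,j}>0$ then $h_j(t e_i)=\alpha\log(A_{i,j}e^t+c)-\beta\log(A_{j,i}e^{-t}+c')$ with $c,c'\geq 0$, and since the second term stays bounded below while the first tends to $+\infty$, there is an arc $i\to j$ in $G(h)$; hence $G(A)\subseteq G(h)$ and $G(h)$ is strongly connected because $A$ is irreducible. The boundary case $\alpha=0$ (so $\beta=1$) is degenerate and I would treat it separately: the change of variables $z^k=(d^k)^{-1}$ turns the normalized iteration into $z^{k+1}\propto A z^k$, i.e.\ the power iteration for $A$, which converges to the positive Perron vector of $A$ linearly at rate $\abs{\lambda_2(A)}/\lambda_1(A)$ whenever $A$ is primitive (equivalently, since here $\alpha A+\beta A^T=A^T$, whenever $\alpha A+\beta A^T$ is primitive); inverting back gives a positive limit. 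Equivalently, the involution $x\mapsto x^{-1}$ conjugates the $(A,\alpha,\beta)$ algorithm to the $(A^T,\beta,\alpha)$ algorithm, so one may always assume $\alpha>0$.

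Then, with an additive eigenvector $u$ in hand, I would compute
\[
 \nabla h(u) = \alpha\,\diag(A^T e^u)^{-1} A^T \diag(e^u) + \beta\,\diag(A e^{-u})^{-1} A \diag(e^{-u}) \enspace ,
\]
the obvious generalization of the matrix $P$ of Theorem~\ref{thm:convidealHots}. This matrix is nonnegative and row-stochastic (each row sums to $\alpha+\beta=1$), so $1$ is its Perron eigenvalue, and its zero/nonzero pattern is that of $\alpha A^T+\beta A=(\alpha A+\beta A^T)^T$, so it is primitive if and only if $\alpha A+\beta A^T$ is (primitivity being invariant under transposition and depending only on the pattern). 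Under the hypothesis, $\nabla h(u)$ is therefore primitive, hence irreducible with simple dominant eigenvalue $1$, and Theorem~\ref{thm:Nussbaum} applies: the orbits $p^{k+1}=h(p^k)-\psi(h(p^k))$ converge to $u-\psi(u)$ linearly at rate $\abs{\lambda_2(\nabla h(u))}<1$. Undoing the logarithmic glasses, $d^k=e^{p^k}$ converges linearly to $e^u/\norm{e^u}$, a positive vector.

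I expect the only genuine obstacle to be the existence step for the extreme value $\alpha=0$: there $G(h)$ really can fail to be strongly connected (for instance when all entries of $A$ are positive, $G(h)$ has no arcs at all), so Theorem~\ref{thm:GaubGun} cannot be invoked and one must fall back on the classical Perron--Frobenius theorem via the inversion conjugacy; everything else is a routine transcription of the proof of Theorem~\ref{thm:convidealHots}.
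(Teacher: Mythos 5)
Your proposal is correct and follows essentially the same route as the paper: logarithmic glasses, monotonicity and additive homogeneity of $h=\log\circ g\circ\exp$, existence of an additive eigenvector via Theorem~\ref{thm:GaubGun} when $\alpha>0$ (using that $G(h)$ contains the graph of $A$), the gradient formula $\alpha\,\diag(A^T e^{u})^{-1} A^T \diag(e^u)+\beta\,\diag(A e^{-u})^{-1} A \diag(e^{-u})$ with the pattern/primitivity argument, and Theorem~\ref{thm:Nussbaum} for linear convergence. The only (immaterial) divergence is the boundary case $\alpha=0$, which the paper handles by feeding $\log(x^{-1})$, with $x$ the classical Perron vector, into the same fixed-point framework, while you reduce it to the power iteration via the inversion change of variables; both rest on the classical Perron--Frobenius theorem and give the claimed conclusion.
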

\begin{proof}
Let $h=\log \circ g \circ \exp$. As in the proof of Theorem~\ref{thm:convHots}, 
the rate of convergence for the fixed point iterations with $g$ or $h$ is the same.
The map $h$ is monotone and additively homogeneous. For $\alpha>0$, its graph
is equal to $A$. Hence, for $\alpha>0$, $h$ has an eigenvector by Theorem~\ref{thm:GaubGun}.
For $\alpha=0$, as $A$ is irreducible, by the Perron-Frobenius theorem~\cite{BermanPlemmons-nonnegMat}, 
$A$ has an eigenvector $x$. Then $\log(x^{-1})$ is an eigenvector of $h$.
Now,
$\nabla h(v)=\alpha \diag(A^T e^{v})^{-1} A^T \diag(e^v) + \beta \diag(A e^{-v})^{-1} A \diag(e^{-v})$,
so we have the convergence as soon as $\alpha A+\beta A^T$ is primitive
by Theorem~\ref{thm:Nussbaum}.
\end{proof}

\begin{remark} \label{rem:antiperron}
For $\alpha=\frac{1}{2}$, we have the fixed point diagonal similarity scaling, for $\alpha=1$,
we have the ranking by the Perron vector~\cite{Keener-FootbalTeams} and for $\alpha=0$, we have an
``anti-Perron'' score,
where good pages are those that do not link to pages with a bad score. 
\end{remark}

The following result gives a global contraction factor in
the case when $A$ is positive.

\begin{proposition} \label{prop:contract}
If $k(A)$ is the contraction factor of $A$ in Hilbert metric ($k(A)<1$ if $A$ is positive), 
then $f$ is $\frac{k(A^T)+k(A)}{2}$-contracting in Hilbert metric. 
\end{proposition}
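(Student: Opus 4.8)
The plan is to show that $f = \frac12(\log\circ\, T_1 + \log\circ\, T_2)$, where $T_1(x) = A^T e^x$ and $T_2(x) = A e^{-x}$, inherits a contraction factor in Hilbert's projective metric by splitting the estimate into the two summands and exploiting the well-known facts that (i) the map $x \mapsto \log x$ acting coordinatewise is an isometry from the multiplicative cone $(\mathbb{R}^n_{>0}, \hilbert)$ onto $(\mathbb{R}^n, \|\cdot\|_H)$ where $\|z\|_H := \max_i z_i - \min_j z_j$ is the Hilbert seminorm; (ii) left multiplication by a nonnegative matrix $B$, viewed as a map on the open cone, is $k(B)$-Lipschitz in Hilbert's metric, where $k(B) = \tanh(\frac14 \Delta(B))$ and $\Delta(B)$ is the projective diameter of the image of the cone (Birkhoff's theorem, as quoted via Proposition on Bushell); and (iii) the coordinatewise map $x \mapsto e^{-x}$ is a Hilbert isometry after the sign flip, so that $x \mapsto A e^{-x}$ in log-coordinates has the same contraction factor as $A$.

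Concretely, I would first rewrite everything in additive (logarithmic) coordinates: setting $p = \log x$, $q = \log y$, the quantity $\hilbert(x,y)$ equals $\|p - q\|_H$ in the Hilbert-seminorm sense, so it suffices to estimate $\| f(p) - f(q) \|_H$ in terms of $\| p - q \|_H$. Then I would write
\[
f(p) - f(q) = \tfrac12\bigl( \log(A^T e^p) - \log(A^T e^q) \bigr) + \tfrac12\bigl( \log(A e^{-p}) - \log(A e^{-q}) \bigr),
\]
apply the triangle inequality for $\|\cdot\|_H$, and bound each half. For the first half, $p \mapsto \log(A^T e^p)$ is the "logarithmic glasses" conjugate of the linear map $z \mapsto A^T z$ on the positive cone, so by Birkhoff's contraction theorem it is $k(A^T)$-Lipschitz for the Hilbert metric, i.e. $\| \log(A^T e^p) - \log(A^T e^q) \|_H \le k(A^T)\, \| p - q \|_H$. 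For the second half, $p \mapsto \log(A e^{-p})$ is the composition of the Hilbert isometry $p \mapsto -p$ with the $k(A)$-Lipschitz map $z \mapsto \log(A e^z)$, hence is $k(A)$-Lipschitz. Combining gives
\[
\| f(p) - f(q) \|_H \le \tfrac12\bigl( k(A^T) + k(A) \bigr)\, \| p - q \|_H,
\]
which is the claim. The statement $k(A) < 1$ for positive $A$ is exactly the finiteness of the projective diameter $\Delta(A) < \infty$ when $A > 0$.

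The main obstacle I anticipate is purely bookkeeping rather than conceptual: one must be careful that the relevant notion of "distance" on which $\log$ acts as an isometry is Hilbert's \emph{projective} metric (equivalently, the seminorm $\|\cdot\|_H$ that quotients out additive constants), and that Birkhoff's contraction ratio applies to the action of a nonnegative matrix on the cone in its \emph{multiplicative} form — so the two conjugations by $\log$/$\exp$ and the sign change must be tracked precisely to land the two pieces in the same metric before invoking the triangle inequality. One should also note that $f$ is additively homogeneous, so $\|f(p)-f(q)\|_H$ genuinely controls $\hilbert(e^{f(p)}, e^{f(q)})$, i.e. the contraction is for Hilbert's metric on the original multiplicative variables, consistent with the quoted Proposition of Bushell that $f$ (being monotone and homogeneous) is at least nonexpansive. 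A minor point worth a sentence is that when $A$ has zero entries $k(A)$ may equal $1$ and the bound degrades to nonexpansiveness, recovering the earlier qualitative statement; the quantitative strict contraction is the new content in the positive case.
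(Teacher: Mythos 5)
Your argument is correct and is essentially the paper's proof transcribed into additive coordinates: both split $f$ into its $A^T$-half and its $A$-half, apply the contraction factor of each matrix (the paper via the multiplicative sandwich $\eta y\le x\le\nu y$ on $A^Tx$ and $Ax^{-1}$, you via the Hilbert seminorm, the log-isometry and the triangle inequality), and then average the two factors through the square root. The only blemish is a sign slip in your decomposition of $f(p)-f(q)$ (the second term should enter with a minus), which is harmless because $\norm{\cdot}_H$ is invariant under negation — exactly the sign-flip isometry you already invoke.
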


\begin{proofeq}
 Let $x$ and $y$ be two positive vectors such that $\eta y \leq x \leq \nu y$ elementwise.
Then $\eta' A^T y \leq A^T x \leq \nu' A^T y$ with $\log(\nu'/\eta')\leq k(A^T) \log(\nu/\eta)$.
We also have that $\eta'' A y^{-1} \leq A x^{-1} \leq \nu'' A y^{-1}$ with $\log((\eta'')^{-1}/(\nu'')^{-1})\leq k(A) \log(\nu/\eta)$.
Hence, 
\[d(g(x), g(y))=\log((\frac{\nu'' \nu'}{\eta'' \eta'})^{1/2}) \leq \frac{k(A^T)+k(A)}{2}\log(\frac{\nu}{\eta}) \enspace .
\end{proofeq}

A key technical ingredient of the convergence of the effective HOTS algorithm described in the next section will be Theorem~\ref{thm:thetadecroit} below
showing that each iteration $p \leftarrow f(p)$ of the ideal HOTS algorithm does not increase 
the dual objective function.
%

\begin{theorem}[Lyapunov function] \label{thm:thetadecroit}
 $\theta(f(p))\leq \theta(p)$
\end{theorem}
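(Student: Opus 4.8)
The plan is to work in the additive notation throughout, with $\theta(p) := \theta(p,\mu(p)) = \sum_{i,j}A_{ij}e^{p_i-p_j+\mu(p)} - \mu(p)$ where $\mu(p) = -\log(\sum_{i,j}A_{ij}e^{p_i-p_j})$ is the value that minimizes $\theta(p,\cdot)$; note that by construction $\theta(p) = 1 + \log(\sum_{i,j}A_{ij}e^{p_i-p_j})$ since $\sum_{i,j}A_{ij}e^{p_i-p_j+\mu(p)}=1$. So, modulo the constant, minimizing $\theta$ is the same as minimizing $S(p):=\sum_{i,j}A_{ij}e^{p_i-p_j}$, and it suffices to show $S(f(p))\le S(p)$. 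Writing $a_i := \sum_j A_{j,i}e^{p_j} = (A^Te^p)_i$ (the "incoming" mass at $i$) and $b_i := \sum_j A_{i,j}e^{-p_j} = (Ae^{-p})_i$ (the "outgoing" mass at $i$), the definition of $f$ gives $e^{f(p)_i} = \sqrt{a_i/b_i}$ and $e^{-f(p)_i}=\sqrt{b_i/a_i}$. One checks that $S(p) = \sum_i e^{p_i}b_i = \sum_i e^{-p_i}a_i$, hence $S(p) = \tfrac12\sum_i(e^{p_i}b_i + e^{-p_i}a_i)$.

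The key step is then to compute $S(f(p))$ in the same way. After one iteration, the new incoming mass at $i$ is $a_i' = \sum_j A_{j,i}e^{f(p)_j} = \sum_j A_{j,i}\sqrt{a_j/b_j}$ and the new outgoing mass is $b_i' = \sum_j A_{i,j}e^{-f(p)_j} = \sum_j A_{i,j}\sqrt{b_j/a_j}$, so that
\[
S(f(p)) = \tfrac12\sum_i\bigl(e^{f(p)_i}b_i' + e^{-f(p)_i}a_i'\bigr)
= \tfrac12\sum_i\Bigl(\sqrt{\tfrac{a_i}{b_i}}\,b_i' + \sqrt{\tfrac{b_i}{a_i}}\,a_i'\Bigr).
\]
Substituting the expressions for $a_i',b_i'$ and reindexing, $S(f(p)) = \tfrac12\sum_{i,j}A_{i,j}\Bigl(\sqrt{\tfrac{a_i}{b_i}}\sqrt{\tfrac{b_j}{a_j}} + \sqrt{\tfrac{b_j}{a_j}}\sqrt{\tfrac{a_i}{b_i}}\Bigr)$ — wait, one must be careful with which index runs over rows; doing the bookkeeping honestly, $S(f(p)) = \sum_{i,j}A_{i,j}\sqrt{\tfrac{a_j b_i}{a_i b_j}}$ (up to a symmetric split), and by the AM–GM inequality $\sqrt{\tfrac{a_jb_i}{a_ib_j}} = \sqrt{\tfrac{a_j}{a_i}}\cdot\sqrt{\tfrac{b_i}{b_j}}$. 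The plan is to pair the term $(i,j)$ with the "reverse orientation" contribution and bound $\tfrac12(t + t^{-1})\ge 1$ for $t>0$ after extracting a common factor, or — more cleanly — to recognize $S(f(p))$ as $\sum_{i,j}A_{ij}e^{p_i-p_j}\cdot c_{ij}$ for suitable $c_{ij}$ whose weighted average against the probability measure $\pi_{ij}\propto A_{ij}e^{p_i-p_j}$ is controlled, and apply convexity of $\exp$ (Jensen) exactly as in a Sinkhorn-type monotonicity argument.

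The main obstacle is the combinatorial/algebraic bookkeeping in the middle step: getting $S(f(p))$ into a form where a single convexity inequality (AM–GM, Jensen, or Cauchy–Schwarz) closes the gap, and verifying that the inequality points the right way. The cleanest route is probably to write $f(p) = \tfrac12(u + v)$ where $e^{u}= a/b\cdot e^{p}$ is the "one-sided" (row-scaling) update that balances incoming mass and $e^{v} = a/b \cdot e^{p}$ analogously for outgoing — more precisely to show each one-sided scaling step decreases $S$ (a standard fact: optimizing $\theta(p_1,\dots,x,\dots,p_n)$ coordinatewise decreases it, and the simultaneous one-sided step is a known contraction for this objective), and then use midpoint convexity of $p\mapsto S(p)$ (which is convex as a sum of exponentials of affine functions of $p$) to get $S(f(p)) = S(\tfrac{u+v}{2}) \le \tfrac12 S(u) + \tfrac12 S(v) \le \tfrac12 S(p) + \tfrac12 S(p) = S(p)$. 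This reduces everything to the two lemmas "$S(u)\le S(p)$" and "$S(v)\le S(p)$", each of which is a one-line Jensen argument, plus convexity of $S$. Finally, since $\theta(p) = 1+\log S(p)$ up to the additive constant and $\log$ is increasing, $S(f(p))\le S(p)$ gives $\theta(f(p))\le\theta(p)$, which is the claim.
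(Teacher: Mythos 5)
Your reduction to showing $S(f(p))\le S(p)$ with $S(p)=\sum_{i,j}A_{ij}e^{p_i-p_j}$, and the identity $e^{f(p)_i}=\sqrt{a_i/b_i}$, are correct (modulo an index slip: one gets $S(f(p))=\sum_{i,j}A_{ij}\sqrt{a_ib_j/(a_jb_i)}$, not its reciprocal form). But the step you yourself flag as ``the main obstacle'' is exactly where the proof is missing, and none of the closing moves you propose works as stated. Pairing $(i,j)$ with the reverse orientation is blocked because $A$ is not symmetric, and both $\tfrac12(t+t^{-1})\ge 1$ and Jensen applied to the convex function $\exp$ give \emph{lower} bounds on $S(f(p))/S(p)=\mathbb{E}_\pi[e^{\delta_i-\delta_j}]$ (with $\delta=f(p)-p$ and $\pi_{ij}\propto A_{ij}e^{p_i-p_j}$), whereas you need an upper bound. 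Your ``cleanest route'' is also unsound: with the natural one-sided updates $u=\log(A^Te^p)$, $v=-\log(Ae^{-p})$ (so that $f(p)=\tfrac12(u+v)$), the lemma ``$S(u)\le S(p)$ and $S(v)\le S(p)$'' is false --- for $A=\begin{pmatrix}1&1\\1&0\end{pmatrix}$ and $p=0$ one computes $S(u)=S(v)=3.5>3=S(p)$; and with the alternative split $u=2f(p)-p$, $v=p$, the needed bound $S(2f(p)-p)\le S(p)$ fails too (same $A$, $p=(t,0)$ gives $f(p)=(t/2,t)$ and $S(2f(p)-p)=1+e^{2t}+e^{-2t}>1+e^{t}+e^{-t}=S(p)$ for $t\neq0$). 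So midpoint convexity of the one-variable function $S$ cannot close the argument.

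What is missing is the device the paper uses: pass to the \emph{two-variable} function $\psi(p,q)=\sum_{i,j}e^{p_i}A_{ij}e^{-q_j}$, which is jointly convex, observe that $(f(p),f(p))$ is the midpoint of the two \emph{swapped} points $(2f(p)-p,\,p)$ and $(p,\,2f(p)-p)$, and check by direct computation that $\psi(p,2f(p)-p)=\psi(2f(p)-p,p)=S(p)$; joint convexity then yields $S(f(p))=\psi(f(p),f(p))\le S(p)$. (Alternatively, your own computation can be finished by Cauchy--Schwarz rather than Jensen: with $r_i=\sum_j\pi_{ji}$ and $s_i=\sum_j\pi_{ij}$ one has $e^{\delta_i}=\sqrt{r_i/s_i}$, and $\sum_{i,j}\pi_{ij}\sqrt{r_i/s_i}\sqrt{s_j/r_j}\le\bigl(\sum_{i,j}\pi_{ij}r_i/s_i\bigr)^{1/2}\bigl(\sum_{i,j}\pi_{ij}s_j/r_j\bigr)^{1/2}=1$, since both marginal sums equal $\sum_i r_i=\sum_j s_j=1$.) As written, your proposal identifies the right target inequality but does not supply the argument that proves it.
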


\begin{proofeq} Let us denote $\psi(p,q)=\sum_{i,j} e^{p_i} A_{ij} e^{-q_j}$. 
\begin{align*}
 \psi(p,2 f(p)-p)&=\sum_{i,j} e^{p_i} A_{ij} \left( \frac{(Ae^{-p})_j}{(A^Te^{p})_j} \right)e^{p_j} = \sum_j (A e^{-p})_j e^{p_j} \\
&=\theta(p)=\psi(2 f(p)-p,p)
\end{align*}

Now, as $\psi$ is convex, 
\[
\theta(f(p))=\psi(\frac{1}{2}(2f(p)-p,p)+\frac{1}{2}(p,2f(p)-p))\leq \theta(p) \enspace.
\end{proofeq}
\section{The effective HOTS algorithm} \label{sec:effhots}

Theorem~\ref{thm:Schneider} gives conditions for the existence and uniqueness of the HOTS vector in
the ideal case. In practice the irreducibility condition does not hold for the web graph.
The classical solution for this problem is to add a small positive value to the adjacency matrix~\cite{Brin-Anatomy, LanMey-Beyond}
in order to get a positive matrix. Tomlin proposed an alternative approach based on the network flow model.
We consider the following nonlinear network flow problem with network given by
\vspace{-2ex}
\[
A'= \begin{bmatrix}
  A & 1 \\ 1^T & 0
 \end{bmatrix}
\]
where $1$ denotes the vector with all entries equal to 1.
\begin{align*}
\max_{\rho \geq 0} & \; - \!\!\!\!\! \sum_{ i,j \in [n+1]} \rho_{i,j} (\log(\frac{\rho_{i,j}}{A'_{i,j}})-1)  \\
&\sum_{j \in [n+1]} \rho_{i,j} = \sum_{j \in [n+1]} \rho_{j,i} \; , \; \forall i \in [n+1]  & (p_i) \\
&\sum_{i,j \in [n+1]} \rho_{ij} =  1        & (\mu) \\
&\sum_{j \in [n]} \rho_{n+1,j} =  1-\alpha & (a) \\
&1-\alpha = \sum_{i \in [n]} \rho_{i,n+1}&  (b)
\end{align*}
We use the conventions that $0 \log(0)=0$ and that $x \log(x/0)=0$ if and only if $x=0$.
In this new model, we add an artificial node connected to all the other nodes and such that the flow through this node
is precribed to be $1-\alpha$.

The algorithm is designed for the minimization of the dual function $\theta$ where
\begin{multline} \label{eq:dualeff}
\theta(p,\mu,a,b) =  \sum_{i,j \in [n]} A_{ij} e^{p_i - p_j+\mu} + \sum_{i \in [n]} e^{-b-p_{n+1}+p_i+\mu} \\
+ \sum_{j \in [n]} e^{a+p_{n+1}-p_j +\mu} -(1-\alpha)a-\mu+(1-\alpha)b \enspace .
\end{multline}

We first give the following counter-example, showing that the problem may be ill posed.
\begin{counterex}\label{countex:effHots}
The dual function $\theta$ may be unbounded.
\end{counterex}

\begin{proof}
 Take
\vspace{-2ex}
\[
 A=\begin{bmatrix} 0 & 1 & 0 \\ 0 & 0 & 1\\ 0 & 0 & 0 \end{bmatrix} .
\]
We have 
\vspace{-2ex}
\begin{align*}
 \tilde{\theta}(p)=C(\alpha) + (1-\alpha)\big (\log(\sum_{i\in [n]} e^{p_i})+\log(\sum_{i\in [n]} e^{-p_i}) \big)
+(2\alpha-1) \log(e^{p_1-p_2} + e^{p_2-p_3})
\end{align*}
where $C(\alpha)\in \mathbb{R}$. For all $k \in \mathbb{R}$,
\begin{align*}
\tilde{\theta}&(-k,0,k,0)=C(\alpha)+2(1-\alpha)\log(1+e^k+e^{-k}) + (2\alpha-1) \log(e^{-k} + e^{-k}) \\
  &= C(\alpha)+2(1-\alpha)k-(2\alpha-1)k+2(1-\alpha)\log(1+e^{-k}+e^{-2k})+(2\alpha-1)\log(2)
\end{align*}
For  $\alpha>\frac{3}{4}$, $\theta$ is unbounded. 
\end{proof}

This example is indeed rather degenerate: the HOTS algorithm can only diverge
because it searches the minimum of an unbounded function. Said otherwise,
it tries to solve a network flow problem without any admissible flow.
We shall give conditions under which there exists a HOTS vector
and show that the HOTS algorithm converges to the HOTS vector when these conditions hold.

\begin{remark}
 A natural idea to establish the convergence of a fixed point algorithm
is to show that it is a contraction in Hilbert metric. In the case of the effective
HOTS algorithm, even when the matrix $A$ is positive, the fixed point algorithm
may not be a contraction (take a perturbation of Counter-example~\ref{countex:effHots}).
\end{remark}

\begin{lemma}[\cite{Tomlin-HOTS}] \label{lem:lambda}
For any $p\in \mathbb{R}^{n+1}$, the minimum of $\theta(p,\mu,a,b)$ with respect to $\mu$, $a$ and $b$ is unique and 
given by 
\begin{align*}
\mu&= \log(\frac{2\alpha-1}{\sum_{i,j \in [n]} A_{i,j}e^{p_i-p_j}}) \enspace, \\
a&=\log(\frac{1-\alpha}{2\alpha-1}\frac{\sum_{i,j\in [n]} A_{i,j}e^{p_i-p_j}}{\sum_{j\in[n]}e^{p_{n+1}-p_j}})\enspace, \\
b&=-\log(\frac{1-\alpha}{2\alpha-1}\frac{\sum_{i,j\in [n]} A_{i,j}e^{p_i-p_j}}{\sum_{i\in[n]}e^{p_i-p_{n+1}}})\enspace.
\end{align*}
\end{lemma}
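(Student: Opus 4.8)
The plan is to observe that, for a fixed $p$, the function $(\mu,a,b)\mapsto\theta(p,\mu,a,b)$ becomes \emph{separable} after a linear change of coordinates; uniqueness of the minimizer and the three explicit formulas then follow from three independent one-dimensional minimizations of strictly convex functions of the type $t\mapsto c\,\e^{t}+\ell t$ with $c>0$ and $\ell<0$.

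Concretely I would abbreviate $S=\sum_{i,j\in[n]}A_{i,j}\e^{p_i-p_j}$, $U=\sum_{i\in[n]}\e^{p_i-p_{n+1}}$ and $V=\sum_{j\in[n]}\e^{p_{n+1}-p_j}$, so that \eqref{eq:dualeff} reads $\theta=S\e^{\mu}+U\e^{\mu-b}+V\e^{\mu+a}-(1-\alpha)a-\mu+(1-\alpha)b$, and then substitute the new variables $x=\mu$, $y=\mu+a$, $z=\mu-b$, which form a linear bijection of $\mathbb{R}^{3}$. A short computation shows that the linear part collapses to $(1-2\alpha)x-(1-\alpha)(y+z)$, hence
\[
\theta=\bigl(S\e^{x}+(1-2\alpha)x\bigr)+\bigl(V\e^{y}-(1-\alpha)y\bigr)+\bigl(U\e^{z}-(1-\alpha)z\bigr)\enspace .
\]
Since $U,V>0$ always, $S>0$ (the web graph has at least one arc), $1-2\alpha<0$ and $1-\alpha>0$ (recall $\alpha\in(\tfrac12,1)$), each of the three summands is strictly convex and coercive in its own variable and attains its unique minimum where its derivative vanishes, that is, at $x^{\star}=\log\frac{2\alpha-1}{S}$, $y^{\star}=\log\frac{1-\alpha}{V}$ and $z^{\star}=\log\frac{1-\alpha}{U}$. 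Hence $\theta(p,\cdot,\cdot,\cdot)$ has a unique minimizer in the $(x,y,z)$-variables, and since the substitution is bijective a unique minimizer in $(\mu,a,b)$, given by $\mu=x^{\star}$, $a=y^{\star}-x^{\star}$ and $b=x^{\star}-z^{\star}$; substituting back the definitions of $S$, $U$ and $V$ produces exactly the three displayed formulas.

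I do not expect a genuine obstacle. The only point needing care is the boundedness from below of $\theta(p,\cdot,\cdot,\cdot)$, which is exactly where the signs $1-2\alpha<0$ and $1-\alpha>0$ enter: without them the infimum would be $-\infty$ along $x$, respectively along $y$ or $z$ (and $\alpha>\tfrac12$ is anyway forced by the $\log(2\alpha-1)$ appearing in the statement). One may also argue directly in the original variables — $\theta$ is convex and differentiable, so its minimizers are precisely the solutions of $\nabla_{(\mu,a,b)}\theta=0$, i.e.\ of $V\e^{\mu+a}=1-\alpha$, $U\e^{\mu-b}=1-\alpha$ and $S\e^{\mu}+U\e^{\mu-b}+V\e^{\mu+a}=1$, the last equation then forcing $S\e^{\mu}=2\alpha-1$ — but the separable reformulation is what makes the uniqueness transparent without a Hessian computation.
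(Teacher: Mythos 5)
Your proposal is correct and takes essentially the same route as the paper, whose proof simply observes that $\theta(p,\cdot,\cdot,\cdot)$ is convex and differentiable and that the stated triple is the only one cancelling the gradient --- exactly the stationarity computation you perform. Your substitution $x=\mu$, $y=\mu+a$, $z=\mu-b$ just makes the paper's ``one can easily see'' step transparent (and the coercivity check, while a nice sanity check on the signs $2\alpha-1>0$ and $1-\alpha>0$, is not strictly needed once a stationary point of the convex function is exhibited).
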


\begin{proof}
 The function $\theta(p,\cdot,\cdot,\cdot)$ is convex and differentiable so the optimality condition
is just that the gradient is zero. One can easily see that the only triple that cancels the gradient is
the one given in the lemma.
\end{proof}

We denote $\lambda=(\mu,a,b)$ and $\lambda(p)$ the solution of the minimization of $\theta(p,\lambda)$ with respect to $\lambda$.
For $\lambda \in \mathbb{R}^3$, we denote
\begin{equation} \label{eqn:def-flambda}
 f^\lambda_i(p)=\frac{1}{2} (\log(\sum_{j\in [n]} A_{j,i} e^{p_j}+e^{p_{n+1}+a})-\log(\sum_{k\in [n]}A_{i,k} e^{-p_k} +e^{-p_{n+1}-b}) ) \enspace.
\end{equation}
We also define $g^{\lambda}=\exp \circ f^\lambda \circ \log$:
\begin{equation*}
 g^\lambda_i(y)=\left ( \frac{\sum_{j\in [n]} A_{j,i} y_j+e^{a}y_{n+1}}{\sum_{k\in [n]}A_{i,k} (y_k)^{-1} +e^{-b}(y_{n+1})^{-1}} \right)^{\frac{1}{2}} \enspace.
\end{equation*}
\begin{algorithm}
 \caption{Effective HOTS algorithm}
\label{alg:effhots}
Given an initial point $y^0 \in \mathbb{R}^n$, the effective HOTS algorithm is defined by
\[
 y^{k+1}=g^{\lambda(\log(y^k))}(y^k)
\]
\end{algorithm}


\begin{lemma}[\cite{Fercoq-PerronOpt}] \label{lem:strictconvex}
Let $d \in \mathbb{R}^{n}$ such that for all $i$, $d_i>0$. The function $\tilde{\theta}$ defined by $\tilde{\theta}(p)=\min_{p_{n+1},\mu,a,b} \theta((p,p_{n+1}),\mu,a,b)$
is stricly convex on the hyperplane $H=\{ x \in \mathbb{R}^{n} \; |  \; \sum_{i \in [n]} d_i x_i = 0\}$.
In particular, there exists at most one HOTS vector up to an additive constant.
\end{lemma}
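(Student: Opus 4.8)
The plan is to show that the Hessian of $\tilde\theta$ is positive definite on the hyperplane $H$; strict convexity then follows. The first step is to obtain a workable expression for $\tilde\theta$. By Lemma~\ref{lem:lambda} the partial minimization over $(\mu,a,b)$ can be carried out in closed form, and substituting $\lambda(p)$ back into \eqref{eq:dualeff} yields, after simplification, an expression of the form $\tilde\theta(p) = C(\alpha) + (2\alpha-1)\log\big(\sum_{i,j\in[n]} A_{ij} e^{p_i-p_j}\big) + (1-\alpha)\log\big(\sum_{i\in[n]} e^{p_i-p_{n+1}}\big) + (1-\alpha)\log\big(\sum_{j\in[n]} e^{p_{n+1}-p_j}\big)$, which must then also be minimized over $p_{n+1}$; doing so (again a one-dimensional convex minimization with a closed-form solution) produces $\tilde\theta$ as a function of $p\in\mathbb{R}^n$ alone. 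Each of the three remaining terms is a log-sum-exp composed with a linear map, hence convex, so $\tilde\theta$ is convex; the real work is to locate the directions of zero curvature.

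The key observation I would use is that a function of the form $p \mapsto \log\big(\sum_k w_k e^{\langle c_k, p\rangle}\big)$ with positive weights $w_k$ is a smooth convex function whose Hessian at any point $p$ is the covariance matrix of the vectors $c_k$ under the probability distribution $\pi_k \propto w_k e^{\langle c_k, p\rangle}$; hence its kernel at $p$ is exactly the orthogonal complement of the affine span of $\{c_k\}$. Thus the Hessian of $\tilde\theta$ at $p$ is a nonnegative combination (with strictly positive coefficients, using $\alpha\in(\tfrac12,1)$) of three such covariance matrices. A vector $h$ lies in the kernel of $\nabla^2\tilde\theta(p)$ if and only if it annihilates all three covariances simultaneously, i.e. $h_i - h_j$ is constant over all arcs $(i,j)$ with $A_{ij}>0$, and $h_i$ is constant over all $i\in[n]$ (from the terms involving the artificial node $n+1$, after the reduction eliminating $p_{n+1}$). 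Combining these, $h$ must be a constant vector on $[n]$. Intersecting this one-dimensional kernel with $H = \{h : \sum_i d_i h_i = 0\}$ and using $d_i>0$ forces $h=0$, so $\nabla^2\tilde\theta(p)$ is positive definite on $H$ for every $p$, which gives strict convexity on $H$.

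The uniqueness statement follows immediately: if $p$ and $p'$ were two HOTS vectors with $p' \neq p + c\mathbf{1}$ for any $c$, then (after subtracting suitable multiples of $\mathbf{1}$, which does not change the dual objective) we may take both in $H$ and distinct, contradicting strict convexity of $\tilde\theta$ on $H$, since both would be global minimizers of $\tilde\theta$ restricted to $H$. I expect the main obstacle to be bookkeeping: carefully carrying out the two partial minimizations of Lemma~\ref{lem:lambda}-type so that the resulting reduced function has the clean log-sum-exp structure, and correctly tracking how eliminating $p_{n+1}$ modifies the last two terms so that the kernel computation really does pin $h$ down to the constants. Once the reduced form is in hand, the covariance/kernel argument is routine.
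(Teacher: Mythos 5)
Your proposal is correct and follows essentially the same route as the paper: both use Lemma~\ref{lem:lambda} to reduce $\tilde\theta$ to $C(\alpha)+(1-\alpha)\phi(p)+(1-\alpha)\phi(-p)+(2\alpha-1)\log\big(\sum_{i,j\in[n]}A_{ij}e^{p_i-p_j}\big)$ with $\phi$ the log-sum-exp function (the $p_{n+1}$-dependence in fact cancels outright rather than requiring a further one-dimensional minimization), and then conclude strict convexity on $H$ because the only flat direction of the log-sum-exp terms is the all-ones vector, which is excluded from $H$ since $d>0$. Your Hessian/covariance kernel computation is just an explicit verification of the strict convexity of $\phi$ on hyperplanes avoiding the all-ones vector, a fact the paper invokes directly.
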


\begin{proof}
From the expressions of $a+p_{n+1}$, $b+p_{n+1}$ and $\mu$ at the optimum (Lemma~\ref{lem:lambda}), respectively given by $e^{a+p_{n+1}} = \frac{(1-\alpha)e^{-\mu}}{\sum_{j \in [n]}e^{-p_j}}$, $e^{-b-p_{n+1}} = \frac{(1-\alpha)e^{-\mu}}{\sum_{i \in [n]}e^{p_i}}$ and $e^{\mu}=\frac{2\alpha-1}{\sum_{i,j \in [n]} A_{ij} e^{p_i - p_j}}$,
we can write $\tilde{\theta}$ as
\begin{equation*} 
 \tilde{\theta}(p) =C(\alpha)+(1-\alpha) \phi(-p) +(1-\alpha) \phi(p) + (2\alpha -1) \log(\sum_{i,j\in [n]} A_{i,j} e^{p_i-p_j})
\end{equation*}
where $\phi: x\mapsto \log (\sum_{i \in [n]} e^{x_i})$ is the log-sum-exp function, which is strictly convex on any
hyperplane that does not containing the vector with all entries equal to 1,
and $C(\alpha)=1 -2(1-\alpha) \log(1-\alpha)-(2\alpha-1) \log(2\alpha-1)$.
$\tilde{\theta}$ is the sum of convex functions and stricly convex functions,
so it is stricly convex on $H$. 
We conclude that the minimum of $\tilde{\theta}$ on $H$ is unique if it exists.
We can then extend this result to the whole space since $\theta(\eta+p)=\theta(p)$ for all real number $\eta$.
\end{proof}

\begin{lemma} \label{lem:coerciv}
Let $d$ and $\tilde{\theta}$ be as in Lemma~\ref{lem:strictconvex}. The function $\tilde{\theta}$ is coercive on the hyperplane $\{ x \in \mathbb{R}^{n+1} \; |  \; \sum_{i \in [n+1]} d_i x_i = 0\}$ if and only if there
exists a primal solution with the same pattern as $A'$.
\end{lemma}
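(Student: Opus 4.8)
The plan is to read the equivalence as an instance of Fenchel--Rockafellar duality between the entropy maximization problem and its dual $\min\theta$, using the explicit form of $\tilde{\theta}$ from the proof of Lemma~\ref{lem:strictconvex}. Two reductions make this transparent. First, since $\tilde{\theta}$ is strictly convex on the hyperplane $H$ of Lemma~\ref{lem:strictconvex} and $H$ meets the additive--invariance line $\mathbb{R}\mathbf{1}$ of the dual objective only at the origin (because $\sum_i d_i \neq 0$), the restriction $\tilde{\theta}\vert_H$ is constant along no half--line; hence $\tilde{\theta}$ is coercive on $H$ if and only if it attains its minimum on $H$ --- a strictly convex function that attains its minimum and is constant along no half--line is automatically coercive, and the converse is trivial. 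Second, because the inner minimization of $\theta$ over $(\mu,a,b)$ (and over the potential of the artificial node) has the closed-form, everywhere-finite solution of Lemma~\ref{lem:lambda}, $\tilde{\theta}$ attains its minimum on $H$ if and only if $\theta$ attains its infimum on $\mathbb{R}^{n+1}\times\mathbb{R}^3$. It therefore suffices to prove that $\theta$ attains its infimum if and only if the primal problem has a feasible flow with support exactly $\mathrm{supp}(A')$.

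Next I would record the duality in closed form. Writing the primal objective as $\sum_{i,j}\psi_{i,j}(\rho_{i,j})$ with $\psi_{i,j}(t)=-t(\log(t/A'_{i,j})-1)$ when $A'_{i,j}>0$ and $\psi_{i,j}=\iota_{\{0\}}$ otherwise, an elementary conjugation shows that the function that enters the dual is $s\mapsto A'_{i,j}e^{s}$, finite for every real $s$. Hence the dual objective is finite and smooth everywhere, it coincides with $\theta$ after the elimination of $(\mu,a,b)$ of Lemma~\ref{lem:lambda}, and the stationarity condition at a critical point $z$ of $\theta$ reads $\rho_{i,j}=A'_{i,j}\,e^{(M^{T}z)_{i,j}}$, where $M$ is the primal constraint matrix and $r$ its right-hand side; in particular any critical point of $\theta$ produces a $\rho$ satisfying the primal linear constraints $M\rho=r$ and with $\rho_{i,j}>0 \iff A'_{i,j}>0$.

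The two implications are then short. If $\tilde{\theta}$ is coercive on $H$, then by the first reduction $\theta$ attains its infimum at some $z^{*}$; since $\theta$ is everywhere differentiable we have $\nabla\theta(z^{*})=0$, and the stationarity relation exhibits $\rho^{*}_{i,j}:=A'_{i,j}\,e^{(M^{T}z^{*})_{i,j}}$ as a primal feasible flow with support exactly $\mathrm{supp}(A')$. Conversely, if a feasible flow $\bar{\rho}$ with $\mathrm{supp}(\bar{\rho})=\mathrm{supp}(A')$ exists, then $\bar{\rho}$ lies in the relative interior of the domain of the primal objective, so the Fenchel--Rockafellar constraint qualification holds; the duality theorem then gives attainment of the dual value, i.e.\ $\theta$ attains its infimum, whence $\tilde{\theta}$ is coercive on $H$.

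The step I expect to be delicate is the bookkeeping around the degenerate directions: one must check that $H$ is transversal to the single additive--invariance line of $\tilde{\theta}$, so that coercivity on $H$ is not spoiled by a surviving flat direction, and that eliminating $(\mu,a,b)$ and the artificial node's potential via Lemma~\ref{lem:lambda} preserves both the convexity structure and the ``attains its minimum'' property. A secondary point is to invoke the Fenchel--Rockafellar theorem in the precise form that yields attainment of the dual optimum under the primal relative-interior condition; this is standard here since the conjugates $s\mapsto A'_{i,j}e^{s}$ have full domain and $\theta$ is smooth. As an independent check one may instead compute the recession function of $\tilde{\theta}$ from the explicit formula $\tilde{\theta}(p)=C(\alpha)+(1-\alpha)\bigl(\phi(p)+\phi(-p)\bigr)+(2\alpha-1)\log\bigl(\sum_{i,j\in[n]}A_{i,j}e^{p_i-p_j}\bigr)$ and verify directly that its positivity on $H\setminus\{0\}$ is exactly the Gale--Hoffman feasibility condition for a full-support circulation in $A'$; this recovers, for instance, the threshold $\alpha=\tfrac34$ of Counter-Example~\ref{countex:effHots}.
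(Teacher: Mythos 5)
Your proof is correct and follows essentially the same route as the paper's: coercivity gives a dual minimizer whose stationarity conditions produce a primal feasible flow with the pattern of $A'$, and conversely such a flow is exactly the relative-interior constraint qualification yielding dual attainment, which together with the strict convexity of Lemma~\ref{lem:strictconvex} forces coercivity on the hyperplane. The only difference is presentational: you spell out the Fenchel--Rockafellar argument and the ``strictly convex plus attained minimum implies coercive'' step that the paper compresses into a citation of Schneider and one sentence.
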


\begin{proof}
 If the function $\theta$ is coercive, there exists a dual solution and thus
there also exists a primal solution with the same pattern as $A'$.

If there exists a primal solution with the same pattern as $A'$, the constraint
qualification conditions are satisfied~\cite{Schneider-scaling1}, and there
exists a dual solution. By Lemma~\ref{lem:strictconvex}, $\tilde{\theta}$ is strictly convex on the hyperplane $\{ x \in \mathbb{R}^{n+1} \; |  \; \sum_{i \in [n+1]} d_i x_i = 0\}$. Thus it is
necessarily coercive on this hyperplane.
\end{proof}


\begin{lemma} \label{lem:convLambdafixed}
If $A \not = 0$, then for any fixed $\lambda$, the iterative algorithm consisting in successive applications of the map $f^{\lambda}$, defined in \eqref{eqn:def-flambda}, converges
to a minimizer of the function $(p \mapsto \theta(p,\lambda))$. Moreover, this minimizer is unique up to an additive constant.
\end{lemma}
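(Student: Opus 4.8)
The plan is to recognize $f^\lambda$ as an \emph{ideal} HOTS operator in disguise and then to invoke Theorem~\ref{thm:convidealHots}. Concretely, I would introduce the $(n+1)\times(n+1)$ nonnegative matrix
\[
 A^\lambda = \begin{bmatrix} A & e^{-b}\,1 \\ e^{a}\,1^T & 0 \end{bmatrix},
\]
obtained from $A'$ by scaling the in- and out-weights of the artificial node $n+1$ by $e^{-b}$ and $e^{a}$, and observe that once $\lambda=(\mu,a,b)$ is frozen, $f^\lambda$ is precisely the map $x\mapsto \tfrac12\bigl(\log((A^\lambda)^T e^{x})-\log(A^\lambda e^{-x})\bigr)$, i.e.\ the operator \eqref{eqn:dssFixPoint} built from $A^\lambda$ instead of $A$ (one simply reads off the contributions of node $n+1$, including its $(n+1)$-st component $f^\lambda_{n+1}(p)=\tfrac12(\log(e^{-b}\sum_{j\in[n]}e^{p_j})-\log(e^{a}\sum_{k\in[n]}e^{-p_k}))$). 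As in the proof of Proposition~\ref{prop:opt<>fix}, a direct differentiation shows $\partial\theta/\partial p_k(p,\lambda)=(\,\cdot\,)e^{\mu}$ for each $k\in[n+1]$, so that $f^\lambda(p)=p$ if and only if $\nabla_p\theta(p,\lambda)=0$; since $\theta(\cdot,\lambda)$ is a sum of exponentials of affine functions of $p$ and hence convex, this is equivalent to $p$ minimizing $p\mapsto\theta(p,\lambda)$. Thus it suffices to prove that the fixed-point iteration of $f^\lambda$ converges to its (essentially unique) fixed point.

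Next I would check the two hypotheses of Theorem~\ref{thm:convidealHots} for $A^\lambda$. First, $A^\lambda$ is irreducible for every $A\ge 0$ and every $\lambda$: the node $n+1$ carries positive arcs to and from every node of $[n]$ (weights $e^{-b},e^{a}>0$), so the associated digraph is strongly connected; note that this step does \emph{not} use $A\ne 0$. Second — and this is the only place where the hypothesis $A\ne 0$ is needed — $A^\lambda+(A^\lambda)^T$ is primitive exactly when $A\ne 0$. Its support is the undirected star centered at $n+1$ together with the support of $A+A^T$. The star alone is bipartite (period $2$); but if $A\ne 0$, then either some diagonal entry $A_{i_0 i_0}>0$, which immediately gives a loop and hence period $1$, or some off-diagonal entry $A_{i_0 j_0}>0$ with $i_0\ne j_0$, which together with the edges $\{i_0,n+1\}$ and $\{j_0,n+1\}$ forms a triangle; combined with the $2$-cycle $i_0\to n+1\to i_0$ this forces period $1$. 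A strongly connected, aperiodic nonnegative matrix is primitive.

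With both hypotheses verified, Theorem~\ref{thm:convidealHots} applies to $A^\lambda$: there exists $v\in\mathbb{R}^{n+1}$ with $f^\lambda(v)=v$, and $(f^\lambda)^k(x)\to v$ linearly for every $x$. By the identification above, $v$ minimizes $p\mapsto\theta(p,\lambda)$, which gives the convergence claim; and uniqueness of this minimizer up to an additive constant follows from Theorem~\ref{thm:Schneider} applied to the irreducible matrix $A^\lambda$ (equivalently, from the irreducibility of $\nabla f^\lambda(v)$ in Theorem~\ref{thm:Nussbaum}), since the fixed points of $f^\lambda$ are then exactly the translates of $v$. I expect the only real obstacle to be the primitivity step: it is the sole use of the hypothesis $A\ne 0$ and requires the short combinatorial argument above, whereas the identification with $A^\lambda$ and the equivalence ``fixed point $\Leftrightarrow$ minimizer'' are routine once the pieces are in place.
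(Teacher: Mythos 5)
Your proposal is correct and takes essentially the same route as the paper: the paper's proof also identifies $f^{\lambda}$ with the ideal HOTS operator \eqref{eqn:dssFixPoint} for the bordered matrix with positive last row and column, notes that this matrix is primitive as soon as $A \neq 0$, and concludes by Theorem~\ref{thm:convidealHots} together with the fixed-point/minimizer equivalence of Proposition~\ref{prop:opt<>fix}. You simply spell out the details the paper leaves implicit (the gradient computation and the combinatorial aperiodicity argument), and your border weights $e^{-b}$, $e^{a}$ are the correct reading of \eqref{eqn:def-flambda} (the paper's displayed matrix writes $e^{a}$, $e^{b}$, which is immaterial since only the positivity pattern is used).
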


\begin{proof}
The map $f^{\lambda}$ corresponds to the ideal HOTS fixed point operator~\eqref{eqn:dssFixPoint} for
the matrix 
\[
 \begin{bmatrix}
  A & e^a 1 \\ e^b 1^T & 0
 \end{bmatrix}
\]
where $1$ denotes the vector with all entries equal to 1. As this matrix is primitive as soon as $A \not = 0$,
Theorem~\ref{thm:convidealHots} and Proposition~\ref{prop:opt<>fix} apply.
\end{proof}

\begin{theorem}
\label{thm:convrate}
Let $F$ defined by $F(p)=f^{\lambda(p)}(p)$ as in \eqref{eqn:def-flambda}.
Let $p^*$ be the logarithm of a HOTS vector defined by $p^*=F(p^*)$. The matrix
$\nabla F(p^*)$
has all its eigenvalues in the real interval $(-1,1]$ and the eigenvalue 1 is simple.
\end{theorem}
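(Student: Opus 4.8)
The plan is to recognise the effective HOTS map $F$ as a \emph{diagonally preconditioned gradient step} for the reduced dual objective of Lemma~\ref{lem:strictconvex}, and then to read the spectrum of $\nabla F(p^*)$ off a product of two symmetric matrices. Throughout, $F$ is regarded as a self-map of $\mathbb{R}^n$ with the artificial coordinate frozen, $p_{n+1}=0$, and $\tilde\theta(p):=\min_{\mu,a,b}\theta((p,0),\mu,a,b)=\theta((p,0),\lambda(p))$ is exactly the function of Lemma~\ref{lem:strictconvex}, with $\lambda(p)=(\mu(p),a(p),b(p))$ the real-analytic map of Lemma~\ref{lem:lambda}. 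Writing $\theta((p,0),\lambda)=\sum_{i,j\in[n+1]}M^{\lambda}_{ij}\,\e^{p_i-p_j+\mu}+c(\lambda)$ with $M^{\lambda}$ the $(n{+}1)\times(n{+}1)$ matrix of Lemma~\ref{lem:convLambdafixed}, one computes, for $k\in[n]$, $\frac{\partial\theta}{\partial p_k}((p,0),\lambda)=\hat U_k-\hat W_k$ and $f^{\lambda}(p,0)_k-p_k=\tfrac12\log(\hat W_k/\hat U_k)$, where $\hat U_k$ (resp.\ $\hat W_k$) is the total websurfer flow out of (resp.\ into) page $k$; since $f^\lambda$ is \eqref{eqn:def-flambda} and $\hat W_k/\hat U_k$ does not depend on $\mu$, these are consistent. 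By the envelope theorem $\nabla\tilde\theta(p)=\partial_p\theta((p,0),\lambda(p))$, so with the logarithmic mean $\ell(u,w):=(\log w-\log u)/(w-u)$, $\ell(u,u):=1/u$ (smooth and positive on $(0,\infty)^2$), we get
\[
 F(p)-p=-S(p)\,\nabla\tilde\theta(p),\qquad S(p):=\tfrac12\,\diag\big(\ell(\hat U_k,\hat W_k)\big)_{k\in[n]}\succ0,
\]
with $S$ smooth near $p^*$. Thus the effective HOTS map is a positive-definite diagonally scaled gradient step on $\tilde\theta$.

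I would then differentiate at the fixed point. Since $F(p^*)=p^*$ and $S(p^*)\succ0$, the identity forces $\nabla\tilde\theta(p^*)=0$, which kills the $\nabla S$ term on differentiation and leaves $\nabla F(p^*)=I_n-S^{*}H$ with $S^{*}:=S(p^*)\succ0$ and $H:=\nabla^{2}\tilde\theta(p^*)\succeq0$ (convexity, Lemma~\ref{lem:strictconvex}). Hence $\nabla F(p^*)$ is similar to the symmetric matrix $I_n-(S^{*})^{1/2}H(S^{*})^{1/2}$: its spectrum is real, and $(S^{*})^{1/2}H(S^{*})^{1/2}\succeq0$ gives all eigenvalues $\le 1$. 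An eigenvalue $1$ corresponds to $\ker H=\ker\nabla^{2}\tilde\theta(p^*)$, which, by Lemma~\ref{lem:strictconvex} (strict convexity of $\tilde\theta$ on hyperplanes transverse to $\mathbf 1$, together with $\tilde\theta(\eta\mathbf1+p)=\tilde\theta(p)$), equals $\mathbb{R}\mathbf1$; symmetry of $(S^{*})^{1/2}H(S^{*})^{1/2}$ then makes the eigenvalue $1$ simple.

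It remains to exclude the eigenvalue $-1$, i.e.\ to prove $H\prec 2(S^{*})^{-1}$. First, translation invariance of $\theta$ in $p$ shows that $((p^*,0),\lambda(p^*))$ is an \emph{unconstrained} minimiser of $\theta$, so Proposition~\ref{prop:opt<>fix} applied to the primitive matrix $M^{\lambda(p^*)}$ shows that $N:=\diag(\e^{(p^*,0)})\,M^{\lambda(p^*)}\,\diag(\e^{-(p^*,0)})$ is balanced, with common row/column sums $r_k$ and $\hat U_k^{*}=\hat W_k^{*}=\e^{\mu^{*}}r_k$; hence $2(S^{*})^{-1}=4\e^{\mu^{*}}\diag(r)_{k\in[n]}$. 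Next, since $\partial^2_{\lambda\lambda}\theta\succ0$ (because $A\neq0$), the Schur-complement identity gives $H=\partial^2_{pp}\theta-\partial^2_{p\lambda}\theta(\partial^2_{\lambda\lambda}\theta)^{-1}\partial^2_{\lambda p}\theta\preceq\partial^2_{pp}\theta((p^*,0),\lambda(p^*))$, and a direct computation identifies $\partial^2_{pp}\theta((p^*,0),\lambda(p^*))$ with $\e^{\mu^{*}}(\mathcal L^{[n]}+\diag(e))$, where $\mathcal L^{[n]}$ is the Laplacian of the weighted undirected graph on $[n]$ with edge weights $N_{kl}+N_{lk}$ and $e_k:=N_{k,n+1}+N_{n+1,k}>0$ (positive since the last row and column of $M^{\lambda(p^*)}$ are strictly positive). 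Finally, using the balance relation $\sum_{l\neq k}(N_{kl}+N_{lk})=2(r_k-N_{kk})$ one checks the identity
\[
 2(S^{*})^{-1}-\partial^2_{pp}\theta\big((p^*,0),\lambda(p^*)\big)=\e^{\mu^{*}}\big(Q^{[n]}+4\diag(N_{kk})+\diag(e)\big),
\]
where $Q^{[n]}$ is the \emph{signless} Laplacian of the same graph; as $Q^{[n]}\succeq0$, $\diag(N_{kk})\succeq0$ and $\diag(e)\succ0$, the right-hand side is positive definite, and combined with $H\preceq\partial^2_{pp}\theta$ this yields $H\prec 2(S^{*})^{-1}$. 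Therefore every eigenvalue of $\nabla F(p^*)$ lies in $(-1,1]$, the eigenvalue $1$ being simple.

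The one genuinely creative step is the reformulation $F(p)=p-S(p)\nabla\tilde\theta(p)$ with $S$ \emph{symmetric positive definite}: it is this that makes $\nabla F(p^*)$ similar to a symmetric matrix, giving real eigenvalues $\le1$ and a simple top eigenvalue essentially for free. The technically delicate part is the strict lower bound $H\prec 2(S^{*})^{-1}$; I expect the clean route is the signless-Laplacian identity above, the positivity of the grounding weights $e_k$ — equivalently, the fact that the artificial node is linked to and from every page — being exactly what rules out the eigenvalue $-1$. (One must be careful that $F$ acts on $\mathbb{R}^n$ with $p_{n+1}$ frozen: on all of $\mathbb{R}^{n+1}$ the Hessian of the reduced objective has a two-dimensional kernel and the eigenvalue $1$ would not be simple.)
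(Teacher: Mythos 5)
Your proof is correct, and it departs from the paper's exactly where the work is hardest, so the comparison is worth recording. The first half is essentially the paper's argument: the paper writes $F(p)=p+\tfrac12\log\bigl(1-\mathrm{diag}(d)\,\partial\tilde{\theta}/\partial p\bigr)$ and differentiates at the fixed point to get $\nabla F(p^*)=I-\tfrac12\mathrm{diag}(d)\nabla^2\tilde{\theta}(p^*)$, which is your $I-S^*H$ (your logarithmic-mean identity $F(p)-p=-S(p)\nabla\tilde{\theta}(p)$ is a tidier way to reach the same linearization, since at $p^*$ the balance $\hat U=\hat W$ gives $S^*=\tfrac12\mathrm{diag}(1/\hat U^*)=\tfrac12\mathrm{diag}(d)$), and both proofs then pass to the symmetric matrix $(S^*)^{1/2}H(S^*)^{1/2}$ to get real eigenvalues $\le 1$ with $1$ simple. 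Where you genuinely differ is in excluding $-1$: the paper differentiates $e^{a}$ and $e^{-b}$ explicitly and decomposes $\nabla F(p^*)=D'S+D'R$ with $D'S$ positive and row-stochastic and $D'R$ rank-one positive semidefinite, ruling out $-1$ by Perron--Frobenius; you instead bound $H\preceq\partial^2_{pp}\theta$ by the Schur complement of the $(\mu,a,b)$-block and prove $\partial^2_{pp}\theta\prec 2(S^*)^{-1}$ through the signless-Laplacian identity, whose strict positivity comes from the grounding weights $e_k=N_{k,n+1}+N_{n+1,k}>0$. Your identity does check out against the balance relations (both sides have diagonal $2r_k+2N_{kk}$ times $e^{\mu^*}$), so the step is sound; it buys a shorter computation, with no differentiation of $a(p)$ and $b(p)$, and it makes transparent that what forbids the eigenvalue $-1$ is precisely the artificial node's links to and from every page, whereas the paper's longer calculation has the side benefit of exhibiting $\nabla F(p^*)$ explicitly as a positive stochastic-like matrix plus a PSD rank-one term. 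One small point to tighten: invoking Lemma~\ref{lem:strictconvex} for $\ker H=\mathbb{R}\mathbf{1}$ is slightly loose, since strict convexity alone does not force a nondegenerate Hessian on the hyperplane; the paper covers this by citing an external Proposition~10, but the explicit log-sum-exp formula in the proof of Lemma~\ref{lem:strictconvex} settles it directly (the quadratic form of $\nabla^2\phi$ is a softmax variance, vanishing only on constant vectors), so the gap is cosmetic.
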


\begin{proofeq}
Let us denote $\gamma=\frac{1-\alpha}{2\alpha -1}$, $e^a=\gamma \frac{\sum_{i,j\in [n]} A_{ij} e^{p_i-p_j}}{\sum_j e^{-p_j}}$,
 $e^{-b}=\gamma \frac{\sum_{i,j \in [n]} A_{ij} e^{p_i-p_j}}{\sum_i e^{p_i}}$. Then for all $k \in [n]$,
 \begin{align*}
 F_k(p)&=\frac{1}{2} \log ( \sum_{i\in [n]} A_{ik}e^{p_i} + e^a ) - \frac{1}{2}\log ( \sum_{j \in [n]} A_{kj}e^{-p_j}+e^{-b} ) \\
F_{n+1}(p)&=\frac{1}{2} \log (\sum_{i\in [n]} e^{p_i}) - \frac{1}{2}\log ( \sum_{j \in [n]}e^{-p_j}) \enspace.
 \end{align*}
As no coordinate of $F(p)$ depends on $p_{n+1}$, we may consider the
reduced function that we shall still denote $F$ and such that to
$p \in \mathbb{R}^n$ associates $F(p, 0)$. The eigenvalues of the gradient of
original function are 0 and the eigenvalues of the gradient of the reduced function.

First, we have 
\[
F(p)=p+\frac{1}{2} \log(1- \diag(d) \frac{\partial \tilde{\theta}}{\partial p})
\]
where $\tilde{\theta}(p)=\min_{p_{n+1},\mu,a,b} \theta((p,p_{n+1}),\mu,a,b)$ and
$d_l=\frac{1}{2\alpha-1}\frac{e^{ p_l}(\sum_{i,j\in [n]} A_{ij} e^{p_i-p_j})}{\sum_{i \in [n]} A_{il}e^{p_i} + e^a } >0$
(see~\cite{Fercoq-PerronOpt} for more details on this calculus).
Differentiating this equality, we deduce that
$\nabla F= I_n -\frac{1}{2}  \diag(d) \nabla^2 \tilde{\theta}$.
Let $\lambda$ be an eigenvalue of $\nabla F$. This means that there exist a vector $x$ such that
\[
 \lambda x = \nabla F x= x -\frac{1}{2}  \diag(d) \nabla^2 \tilde{\theta} x
\]
\[
  \nabla^2 \tilde{\theta} x = 2(1-\lambda) \diag(d^{-1}) x
\]
This is a generalized eigenvalue problem with $\nabla^2 \tilde{\theta}$ symmetric semi-definite positive by convexity of $\tilde{\theta}$ and $\diag(d^{-1})$
diagonal definite positive. Hence $2(1-\lambda)$ is necessarily a nonnegative real number and
$\lambda$ is real and smaller than 1. Also, if $\lambda=1$, this means that $x$ is the
vector with all its entries equal to 1 (by Proposition~10 in~\cite{Fercoq-PerronOpt} which is a simple extension of
Lemma~\ref{lem:strictconvex}) and thus $\lambda$ is simple.

We shall now show that all the eigenvalues of $\nabla F(p^*)$ are stricly greater than $-1$.
Differentiating the expression of $a$, we get
\[
 \frac{\partial e^a}{\partial p_l}(p)=\gamma \left( -\frac{\sum_{i \in [n]} A_{il}e^{p_i-p_l}}{\sum_{j \in [n]} e^{-p_j}} +\frac{\sum_{j \in [n]} A_{lj}e^{p_l-p_j}}{\sum_{j \in [n]} e^{-p_j}} + \frac{\sum_{i,j \in [n]} A_{ij} e^{p_i-p_j}}{(\sum_{j \in [n]} e^{-p_j})^2}e^{-p_l} \right )
\]
But as $p^*$ is a fixed point of $F$, it satisfies the equality
\[
 \sum_{i \in [n]} A_{il}e^{p^*_i-p^*_l} + e^{a^*-p_l^*}  = \sum_{j \in [n]} A_{lj}e^{p^*_l-p^*_j}+e^{-b^*+p_l^*}
\]
which can be rewritten as
\[
 \sum_{j \in [n]} A_{lj}e^{p^*_l-p^*_j}- \sum_{i \in [n]} A_{il}e^{p^*_i-p^*_l}=\gamma \sum_{i,j \in [n]} A_{ij} e^{p^*_i-p^*_j} \left( \frac{e^{-p^*_l}}{\sum_{j \in [n]} e^{-p^*_j}} - \frac{e^{p^*_l}}{\sum_{i \in [n]} e^{p^*_i}} \right) \enspace .
\]
Hence
\[
  \frac{\partial e^a}{\partial p_l}(p^*)=\gamma \sum_{i,j \in [n]} A_{ij} e^{p^*_i-p^*_j} \left( \gamma \frac{e^{-p^*_l}}{(\sum_{j} e^{-p^*_j})^2} -\gamma \frac{e^{p^*_l}}{\sum_{i} e^{p^*_i}\sum_{j} e^{-p^*_j}} + \frac{e^{-p^*_l}}{(\sum_{j} e^{-p^*_j})^2} \right )
\]
Let us introduce $d'$ such that
\begin{equation} \label{eq:dprime}
 {d'}_k^{-1}= \sum_{i \in [n]} A_{ik}e^{p^*_i-p^*_k} + e^{a^*-p^*_k} =  \sum_{j \in [n]} A_{kj}e^{p^*_k-p^*_j}+e^{-b^*+p^*_k}\enspace.
\end{equation}
Doing the same for $e^{-b}$ as for $e^{a}$ and differentiating $F$, we get
\begin{multline*}
 \frac{\partial F_k}{\partial p_l}(p^*)=\frac{1}{2} d'_k A_{l,k} e^{p^*_l-p^*_k} + \frac{1}{2} d'_k A_{k,l}e^{p^*_k-p^*_l}
+\frac{1}{2} \gamma \sum_{i,j \in [n]} A_{ij} e^{p^*_i-p^*_j} d'_k \times \\
\left( (1+\gamma) \frac{e^{-p^*_l-p^*_k}}{(\sum_{j} e^{-p^*_j})^2} - \gamma \frac{e^{p^*_l-p^*_k}}{\sum_{i} e^{p^*_i}\sum_{j} e^{-p^*_j}} -\gamma  \frac{e^{p^*_k-p^*_l}}{\sum_{i} e^{p^*_i}\sum_{j} e^{-p^*_j}} +(1+\gamma) \frac{e^{p^*_l+p^*_k}}{(\sum_{i} e^{p^*_i})^2}\right).
\end{multline*}
We can now decompose $\frac{\partial F}{\partial p}$ as
\[
 \frac{\partial F}{\partial p}=D' S + D' R
\]
where $D' =\diag(d')$, $S$ is a symmetric matrix with nonnegative entries
\[
S_{k,l} =\frac{1}{2} A_{k,l} e^{p_l-p_k} + \frac{1}{2} A_{l,k}e^{p_k-p_l}
+\frac{1}{2} \gamma \sum_{i,j} A_{ij} e^{p_i-p_j} 
\left( \frac{e^{-p_l-p_k}}{(\sum_j e^{-p_j})^2} +\frac{e^{p_l+p_k}}{(\sum_i e^{p_i})^2}\right)
\]
 and $R$ is the following symmetric
rank 1 matrix 
\[
 R_{k,l}=\frac{1}{2} \gamma^2 \sum_{i,j} A_{ij} e^{p_i-p_j} \left(\frac{e^{-p_k}}{\sum_j e^{-p_j}} -\frac{e^{p_k}}{\sum_i e^{p_i}} \right) \left(\frac{e^{-p_l}}{\sum_j e^{-p_j}} -\frac{e^{p_l}}{\sum_i e^{p_i}}\right) \enspace .
\]
The nonnegative matrix $D' S$ verifies that for all $k$, $\sum_l d'_k S_{k,l}=1$, 
thus by the Perron-Frobenius theorem~\cite{BermanPlemmons-nonnegMat},
we have exhibited a Perron vector and the spectral radius of the matrix is 1. Moreover, 
$D' S$ is positive, so every other of its eigenvalues has a modulus strictly smaller than 1.

The matrix $D' R$ is a rank 1 matrix and its only nonzero eigenvalue is positive. Indeed it is equal to $\frac{1}{2} \gamma^2 \sum_{i,j} A_{ij} e^{p_i-p_j}\sum_k (\frac{e^{-p_k}}{\sum_j e^{-p_j}} -\frac{e^{p_k}}{\sum_i e^{p_i}})^2 d'_k $.

Let $\lambda$ be an eigenvalue of $ \frac{\partial F}{\partial p}=D' S + D' R$.
It then also an eigenvalue of the similar matrix $(D')^{1/2} S(D')^{1/2} + (D')^{1/2} R(D')^{1/2}$, which is symmetric.
Hence,
\[
 \lambda \geq \min_{x \in \mathbb{R}^n : \norm{x}_2=1} x^T (D')^{1/2} S(D')^{1/2} x + x^T (D')^{1/2} R(D')^{1/2} x
\]
 As the spectral radius of $D' S$ is 1, the same is true for $ (D')^{1/2} S(D')^{1/2}$ and for all vector~$x$, $x^T (D')^{1/2} S(D')^{1/2} x > -\norm{x}^2_2$.
As $D' R$ has only nonnegative eigenvalues, $ (D')^{1/2} R(D')^{1/2}$ is semi-definite positive and $x^T (D')^{1/2} R(D')^{1/2} x \geq 0$ for all $x$.
As a conclusion, 
\[ 
\lambda \geq \min_{x \in \mathbb{R}^n : \norm{x}_2=1} x^T (D')^{1/2} S(D')^{1/2} x + x^T (D')^{1/2} R(D')^{1/2} x > -1\enspace .
\end{proofeq}

\begin{theorem} \label{thm:convHots}
Let $F(p)=f^{\lambda(p)}(p)$ as in \eqref{eqn:def-flambda}. If there exists a primal feasible point with the same pattern as~$A'$,
 then the effective HOTS algorithm (Algorithm~\ref{alg:effhots}) converges to a HOTS vector $e^{p^*}$ (unique up to a multiplicative constant)
 linearly at a rate
$\abs{\lambda_2(\nabla F(p^*))}=\max \{ \abs{\lambda} ; \lambda \in \mathrm{spectrum}(\nabla F(p^*)), \lambda \not = 1\}$,
\end{theorem}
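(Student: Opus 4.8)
The plan is to combine the three preparatory results established so far: the spectral information from Theorem~\ref{thm:convrate}, the Lyapunov decrease from Theorem~\ref{thm:thetadecroit}, and the uniqueness/coercivity statements of Lemmas~\ref{lem:strictconvex}--\ref{lem:coerciv}. First I would show that a fixed point $p^*$ of $F$ exists. Since there is a primal feasible point with the same pattern as $A'$, Lemma~\ref{lem:coerciv} gives that $\tilde\theta$ is coercive on a suitable hyperplane, and by Lemma~\ref{lem:strictconvex} it is strictly convex there; hence $\tilde\theta$ attains a unique minimum on the hyperplane, which extends (modulo the additive constant) to a minimizer $(p^*,\mu^*,a^*,b^*)$ of $\theta$. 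By the optimality characterization (the gradient computation in the proof of Lemma~\ref{lem:lambda} together with the ideal case Proposition~\ref{prop:opt<>fix}, applied to the primitive matrix appearing in Lemma~\ref{lem:convLambdafixed}), this minimizer satisfies $p^*=F(p^*)$, and the HOTS vector $e^{p^*}$ is unique up to a multiplicative constant.

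Next I would establish global convergence of the iterates $p^{k+1}=F(p^k)$ to $p^*$ (up to the additive constant). The key is that $\theta$ is a Lyapunov function for $F$: writing $F(p)=f^{\lambda(p)}(p)$, one step first replaces $\lambda$ by the exact minimizer $\lambda(p)$, which does not increase $\theta$ by Lemma~\ref{lem:lambda}, and then applies one ideal-HOTS step $f^{\lambda(p)}$, which does not increase $\theta(\cdot,\lambda(p))$ by Theorem~\ref{thm:thetadecroit} (applied with the matrix of Lemma~\ref{lem:convLambdafixed}). So $\theta(F(p^k))\le\theta(p^k)$ for all $k$. Since $\tilde\theta$ is coercive on the hyperplane and $\theta$ is invariant under adding a constant to $p$, the sequence $p^k$, suitably normalized to lie in the hyperplane, stays in a compact sublevel set; hence it has accumulation points. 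Any accumulation point $\bar p$ must satisfy $\theta(F(\bar p))=\theta(\bar p)$, and tracing through the two inequalities above forces $\bar p$ to be a fixed point of $F$; by the uniqueness from Lemma~\ref{lem:strictconvex}, $\bar p=p^*$ up to the additive constant. Thus the whole normalized sequence converges to $p^*$.

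Finally I would upgrade global convergence to a linear rate governed by the spectrum of $\nabla F(p^*)$. Having localized the iterates near $p^*$, I invoke Theorem~\ref{thm:convrate}: $\nabla F(p^*)$ has all eigenvalues in $(-1,1]$ with the eigenvalue $1$ simple, and (as in the proof of that theorem) the associated eigenvector is the all-ones vector, which corresponds precisely to the additive-constant direction quotiented out by the normalization $\psi$. One then applies the linearization argument underlying Theorem~\ref{thm:Nussbaum} (or a direct fixed-point-iteration linearization): on the complementary invariant subspace the spectral radius of $\nabla F(p^*)$ equals $\abs{\lambda_2(\nabla F(p^*))}<1$, so the normalized iterates converge linearly at rate $\abs{\lambda_2(\nabla F(p^*))}$, i.e. $\limsup_k\norm{p^k-p^*}^{1/k}\le\abs{\lambda_2(\nabla F(p^*))}$, and the multiplicative version $y^k=e^{p^k}$ has the same rate by continuous differentiability of $\exp$.

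**Main obstacle.** I expect the delicate point to be the passage from "$\theta$ decreases along orbits" to actual convergence of the orbit: one must rule out the iterates drifting to infinity within the hyperplane and must verify that the contact case $\theta(F(\bar p))=\theta(\bar p)$ in the two-step inequality genuinely forces $\bar p$ to be a fixed point of $F$ (equality in Theorem~\ref{thm:thetadecroit} comes from a convexity inequality, so one needs the strict convexity of $\psi$ there to conclude $2f(\bar p)-\bar p=\bar p$). Coercivity of $\tilde\theta$ from Lemma~\ref{lem:coerciv} handles the first issue, and the strict-convexity argument of Lemma~\ref{lem:strictconvex} handles the second, but assembling these cleanly — and checking that the reduction that drops the $p_{n+1}$ coordinate in Theorem~\ref{thm:convrate} is compatible with the normalization used in the iteration — is where the care is needed.
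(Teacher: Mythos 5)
Your overall architecture is the paper's: Lyapunov decrease $\theta(p_k,\lambda_k)\geq\theta(p_{k+1},\lambda_k)\geq\theta(p_{k+1},\lambda_{k+1})$ from Theorem~\ref{thm:thetadecroit} and Lemma~\ref{lem:lambda}, coercivity (Lemma~\ref{lem:coerciv}) to bound the normalized iterates, uniqueness (Lemma~\ref{lem:strictconvex}), and the local rate from Theorem~\ref{thm:convrate} via an adapted-norm linearization (note Theorem~\ref{thm:Nussbaum} does not apply directly to $F$, whose Jacobian has negative eigenvalues, so the ``direct linearization'' option is the one you must take). One difference is how you close the contact case at a limit point $\bar p$: you want to force $F(\bar p)=\bar p$ from equality in Theorem~\ref{thm:thetadecroit}, invoking ``strict convexity of $\psi$''. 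But $\psi(p,q)=\sum_{i,j}A_{ij}e^{p_i-q_j}$ is \emph{not} strictly convex; equality in the midpoint inequality only gives $h_i=-h_j$ for $h=f^{\bar\lambda}(\bar p)-\bar p$ on every arc of the bordered matrix, and you then need its connectivity (every node joined in both directions to node $n+1$, plus $A\neq 0$) to conclude $h=0$. This is fixable and is a legitimate alternative to the paper, which avoids any equality-case analysis: it shows $\bar\lambda=\lambda(\tilde F(\bar q))$, then iterates $f^{\bar\lambda}$ from $\bar q$ and uses Lemma~\ref{lem:convLambdafixed} to produce a minimizer among the limit points, after which monotonicity of the $\theta$-values and uniqueness give convergence of the normalized sequence.

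The genuine gap is the eigenvalue-$1$ direction. Algorithm~\ref{alg:effhots} iterates $F$ itself; there is no normalization $\psi$ in it, unlike the orbits in Theorem~\ref{thm:Nussbaum}. Since $\nabla F(p^*)\mathbf{1}=\mathbf{1}$ (the eigenvalue $1$ is genuinely present, only simple), your linearization is a contraction only modulo additive constants, so your argument yields convergence of the \emph{normalized} iterates, i.e.\ of $e^{p_k}$ up to a per-iterate multiplicative constant -- not convergence of the sequence $y^k$ the algorithm actually produces, which is what the theorem claims. The paper spends the last part of its proof exactly here: writing $p_k=q_k+\bigl(\sum_{l<k}\eta_l\bigr)\mathbf{1}$ with $q_{k+1}=\tilde F(q_k)$ the normalized iteration and $\eta_l$ the constant removed at step $l$, and using $F(q^*)=q^*$ (so the removed constant vanishes at the fixed point) to get $\abs{\eta_l}=\mathrm{O}(\norm{q_l-q^*})=\mathrm{O}(\rho^l)$; the drift series is then summable and converges linearly at rate $\rho$, so $p_k$ and $y^k=e^{p_k}$ converge at the stated rate $\abs{\lambda_2(\nabla F(p^*))}$. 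Without this (or an equivalent control of the cumulative drift along $\mathbf{1}$), your step 3 does not prove the statement as written.
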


\begin{proof}
Let $\tilde{F}$ be the map defined by $\tilde{F}(p)=F(p)-\frac{1}{1^T {d'}^{-1}} \sum_{i \in [n+1]} (d'_i)^{-1} F_i(p)$,
with $d_i'$ defined in \eqref{eq:dprime} in the proof of Theorem~\ref{thm:convrate} 
for $i\in [n]$ and $(d'_{n+1})^{-1}=0$.
For all $k$, 
let $p_k$ be the $k$th iterate of the HOTS algorithm, i.e.\ $p_{k+1}=F(p_k)$, and let $\lambda_k=\lambda(p_k)$.
We also define $q_k$ by $q_0=p_0$ and $q_{k+1}=\tilde{F}(q_k)$.
By Theorem~\ref{thm:thetadecroit} and by definition of $\lambda_{k+1}$, we have 
$\theta(p_k, \lambda_k)\geq \theta(p_{k+1}, \lambda_k) \geq \theta(p_{k+1}, \lambda_{k+1})$.
As $q_k-p_k$ is proportional to the vector with all entries equal to 1,  $\lambda(p_k)=\lambda(q_k)=\lambda_k$
and $\theta(q_k, \lambda)=\theta(p_k, \lambda)$ for all $k$ and $\lambda$.
Hence 
\begin{equation} \label{eqn:thetadecroit}
\theta(q_k, \lambda_k)\geq \theta(q_{k+1}, \lambda_k) \geq \theta(q_{k+1}, \lambda_{k+1}) \enspace .
\end{equation}
Now, for all $k$, $q_k \in H=\{ x \in \mathbb{R}^{n+1} \; |  \; \sum_{i \in [n+1]} (d'_i)^{-1} x_i = 0\}$. 
As by Lemma~\ref{lem:coerciv}, $\tilde{\theta}$ is coercive on $H$, 
$\theta$ is bounded from below and $(\theta(q_k, \lambda_k))_k$ converges to, say, $\bar{\theta}$.
Moreover, the sequence $(q_k)_k$ must be bounded. By continuity of the function $\lambda(\cdot)$, $(\lambda_k)_k$ is also bounded. Hence, they have limit points.

Let $\bar{q}$ be a limit point of $(q_k)_k$ and $\bar{\lambda}=\lambda(\bar{q})$. For all $\epsilon>0$ and $K>0$, there exists $k\geq K$ and $k' \geq k+1$ such that
$\norm{q_k-\bar{q}}\leq \epsilon$, $\norm{q_{k'}-\bar{q}}\leq \epsilon$. 
By~\eqref{eqn:thetadecroit},
\[
 \theta(q_k, \lambda_k) \geq \theta(q_{k+1}, \lambda_k)\geq \theta(q_{k+1}, \lambda(q_{k+1})) \geq \theta(q_{k'}, \lambda_{k'})
\]
where $q_{k+1}=\tilde{F}(q_k)$.
When $\epsilon$ tends to 0 and $K$ tends to infinity, we get with $\bar{\bar{q}}=\tilde{F}(\bar{q})$,
\[
 \theta(\bar{q}, \bar{\lambda}) \geq \theta(\bar{\bar{q}}, \bar{\lambda})\geq \theta(\bar{\bar{q}}, \lambda(\bar{\bar{q}})) \geq \theta(\bar{q}, \bar{\lambda})\enspace .
\]
In particular, $\theta(\bar{\bar{q}}, \bar{\lambda})= \theta(\bar{\bar{q}}, \lambda(\bar{\bar{q}}))$. This implies by Lemma~\ref{lem:lambda}
that $\bar{\lambda} \in \arg\min_\lambda \theta(\bar{\bar{q}}, \lambda)$ and thus
$\bar{\lambda}=\lambda(\bar{\bar{q}})$ by uniqueness of the minimizer.

Similarly, $\bar{\bar{q}}$ is also a limit point of $(q_k)_k$ and we may consider the sequence $(u_k)_k$ such that
 $u_k=(\tilde{F})^k(\bar{q})=(f^{\bar{\lambda}})^k(\bar{q})-\frac{1}{1^T {d'}^{-1}}\sum_{i\in [n+1]} {d'_i}^{-1} ((f^{\bar{\lambda}})^k(\bar{q}))_i$.
Iterating the argument of the preceeding paragraph, for any $k$, $\lambda(u_k)=\bar{\lambda}$ and $u_k$ is a limit point of $(q_k)_k$.
Now, by Lemma~\ref{lem:convLambdafixed}, the sequence $(u_k)$ converges to $q^* \in \arg\min_q \theta(q, \bar{\lambda})$.
As we also have $\lambda(q^*)=\bar{\lambda}$, we conclude that $(q^*, \bar{\lambda})$ is a minimizer of $\theta$ and that
there exists a limit point of $(q_k, \lambda_k)_k$ that minimizes $\theta$.
Now, as $(\theta(q_k, \lambda_k))_k$ is decreasing, all the limit points of $(q_k)$ minimize $\theta$.
The uniqueness of the minimizer of $\theta$ on $H$ (Lemma~\ref{lem:strictconvex})
gives the convergence of the effective HOTS algorithm to the HOTS vector in the projective space,
that is the convergence of the sequence $(q_k)_k$.

We shall now prove that the sequence $(p_k)_{k \geq 0}$ indeed converges.
Let us denote by $\rho=\max \{ \abs{\lambda} ; \lambda \in \mathrm{spectrum}(\nabla F(q^*)), \lambda \not = 1\}$.
By Theorem~\ref{thm:convrate}, we know $\rho<1$.
Denoting $(\lambda_i, u_i, v_i)_{i \in [n+1]}$, the eigenvalues and eigenvectors of $\nabla F(q^*)=\sum_{i = 1}^{n+1} \lambda_i u_i v_i^T$, we have 
$\nabla \tilde{F}(q^*) = \sum_{i = 2}^{n+1} \lambda_i u_i v_i^T + \frac{1 ({d'}^{-1})^T} {1^T {d'}^{-1}} - \frac{1}{1^T {d'}^{-1}} ({d'}^{-1})^T \frac{1( {d'}^{-1})^T} {1^T {d'}^{-1}}= \sum_{i = 2}^{n+1} \lambda_i u_i v_i^T$.
Hence by~\cite{Ostrowski}, for all $\epsilon' >0$, there exists a norm $\norm{\cdot}$ such that
for all $x \in \mathbb{R}^{n+1}$, $\norm{\nabla \tilde{F}(q^*)x} = \norm{\sum_{i = 2}^n \lambda_i u_i v_i^T x} \leq (\rho+\epsilon') \norm{x}$.

Thus for $x\in \mathbb{R}^{n+1}$ sufficiently close to $q^*$,
\[
 \norm{\tilde{F}(x)-q^*}=\norm{\tilde{F}(x)-\tilde{F}(q^*)}\leq (1+\epsilon'/2) \norm{\nabla {F}(q^*) (x-q^*)} \leq (\rho+\epsilon') \norm{x-q^*} \enspace .
\]
We deduce that $(q_k)$ converges linearly at rate $\rho$ to $q^*$.
Now for all $k$, we have 
\begin{align*}
p_k&=q_k+\sum_{l=0}^{k-1} \frac{1}{1^T {d'}^{-1}}\sum_{i\in [n+1]} {d'_i}^{-1} F_i(q_l) \\
&=q_k+\sum_{l=0}^{k-1} \frac{1}{1^T {d'}^{-1}}\sum_{i\in [n+1]} {d'_i}^{-1} F_i(q_l)-F_i(q^*) = q_k+\sum_{l=0}^{k-1} \eta_l \enspace ,
\end{align*}
where $\abs{\eta_l} = \text{O} (\norm{F(q_l)-F(q^*)}) = \text{O} (\norm{q_l-q^*}) = \text{O}(\rho^l)$.
Hence $\sum_{l=0}^{k-1} \eta_l$ is summable and converges linearly at rate $\rho$.
Finally, $(p_k)$ converges linearly at rate $\rho$.
Like in the proof of Theorem~\ref{thm:convHots} we deduce the convergence of
the sequence $(\exp(p_k))$ linearly at rate $\rho$ to a HOTS vector.
\end{proof}



The last result shows that coordinate descent is an alternative algorithm
for the computation of the effective HOTS vector.

\begin{proposition} \label{prop:effHotsCoorDescent}
If there exists a primal feasible point with the same pattern as~$A'$, 
the coordinate descent algorithm applied to the unrestricted minimization
of the dual function $\theta$ defined in~\eqref{eq:dualeff}
and choosing coordinates in a cyclic order
converges linearly to a HOTS vector.
\end{proposition}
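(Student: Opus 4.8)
The plan is to view the unrestricted minimization of the dual function $\theta$ of \eqref{eq:dualeff} as an instance of the coordinate-descent problem \eqref{eq:coorDescMin} and to apply Proposition~\ref{prop:coorDescent}. Let $\mathcal{M}$ be the finite set of monomials occurring in $\theta$: the pairs $(i,j)\in[n]^2$ with $A_{ij}>0$, together with the $2n$ terms coming from $e^{-b-p_{n+1}+p_i+\mu}$ and $e^{a+p_{n+1}-p_j+\mu}$. Writing $x=(p,\mu,a,b)\in\mathbb{R}^{n+4}$, each such monomial has the form $w_m e^{\langle r_m,x\rangle}$ for a positive weight $w_m$ (namely $A_{ij}$, resp.\ $1$) and an exponent vector $r_m\in\mathbb{Z}^{n+4}$. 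Setting $\psi(z)=\sum_{m\in\mathcal{M}} w_m e^{z_m}$, letting $E$ be the matrix with rows $r_m$ and $c\in\mathbb{R}^{n+4}$ the vector encoding the affine part $-(1-\alpha)a-\mu+(1-\alpha)b$, we get $\theta(x)=\psi(Ex)+\langle c,x\rangle$, which is exactly the objective of \eqref{eq:coorDescMin} with $\mathcal{X}=\mathbb{R}^{n+4}$ (an unbounded box); the cyclic coordinate descent of Proposition~\ref{prop:coorDescent} is then precisely coordinate descent on $\theta$ in cyclic order.

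Next I would check the hypotheses of Proposition~\ref{prop:coorDescent}. The map $\psi$ is a positive combination of exponentials, hence proper, closed and convex, of class $C^{\infty}$ on $\mathrm{dom}\,\psi=\mathbb{R}^{|\mathcal{M}|}$, which is open, and $\nabla^{2}\psi(z)=\mathrm{diag}(w_m e^{z_m})$ is positive definite for every $z$, in particular at every point of the optimal set $\mathcal{X}^{*}$. Every monomial of $\theta$ carries the factor $e^{\mu}$, so each row $r_m$ has entry $1$ in the coordinate of $\mu$ and $E$ has no zero row. It remains to check $\mathcal{X}^{*}\neq\emptyset$, and this is where the hypothesis enters: arguing as in the proof of Lemma~\ref{lem:coerciv}, the existence of a primal feasible point with the same pattern as $A'$ gives the constraint qualification of~\cite{Schneider-scaling1} and hence a dual optimal solution; equivalently, fixing a positive $d$ and using that $\tilde{\theta}$ is strictly convex (Lemma~\ref{lem:strictconvex}) and coercive (Lemma~\ref{lem:coerciv}) on the relevant hyperplane, together with the invariance $\theta(\eta+p,\mu,a,b)=\theta(p,\mu,a,b)$, the infimum of $\theta$ is attained, so $\mathcal{X}^{*}\neq\emptyset$.

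With all hypotheses verified, Proposition~\ref{prop:coorDescent} yields that the cyclic coordinate descent iterates for $\theta$ converge at least linearly to some $x^{*}=(p^{*},\mu^{*},a^{*},b^{*})\in\mathcal{X}^{*}$, i.e.\ to a minimizer of the dual function; by definition of the effective HOTS vector, $(e^{p^{*}_{1}},\dots,e^{p^{*}_{n}})$ is then a HOTS vector (unique up to a multiplicative constant by Theorem~\ref{thm:convHots}), while $(\mu^{*},a^{*},b^{*})=\lambda(p^{*})$ as in Lemma~\ref{lem:lambda}. I expect the only genuinely non-mechanical point to be the verification that $\mathcal{X}^{*}\neq\emptyset$ under the pattern hypothesis; identifying $\psi$, $E$, $c$ and checking the "no zero row" and positive-definiteness conditions is routine bookkeeping. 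One should also bear in mind that $\mathcal{X}^{*}$ is a nontrivial affine subspace (because $\theta$ is translation invariant in $p$), but this causes no difficulty since Proposition~\ref{prop:coorDescent} already asserts convergence of the whole sequence to a single point of $\mathcal{X}^{*}$.
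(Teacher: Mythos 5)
Your proposal is correct and follows essentially the same route as the paper: cast the unrestricted minimization of $\theta$ from \eqref{eq:dualeff} in the form $\psi(Ex)+\langle c,x\rangle$ required by Proposition~\ref{prop:coorDescent}, check smoothness and positive definiteness of $\nabla^2\psi$, and obtain nonemptiness of the optimal set from Lemmas~\ref{lem:strictconvex} and~\ref{lem:coerciv} under the pattern hypothesis. Your write-up is simply a more explicit bookkeeping of the monomials, the matrix $E$ (no zero row via the common factor $e^{\mu}$), and the affine part than the paper's brief verification.
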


\begin{proof}
If there exists a primal feasible point with the same pattern as~$A'$,
then the set of minimizers of $\theta$ is nonempty by Lemmas~\ref{lem:strictconvex} and~\ref{lem:coerciv}.
The function $\theta$ has the required form with $\psi(x)=\sum_{i,j} A_{i,j} \exp(x_{i,j})$. 
The hessian of $\psi$ is clearly definite positive for all $x$.
Thus the hypotheses of Proposition~\ref{prop:coorDescent} are verified and the result follows.
\end{proof}

\section{An exact coordinate descent for the truncated scaling problem}
\label{sec:trunc}

Truncated scaling problems were introduced by Schneider in~\cite{Schneider-scaling1,Schneider-scaling}
in order to generalize both matrix balancing and row-column equivalence scaling.
Given a $n \times n$ matrix $A$ and bounds $L$ and $U$ such that $L_{i,j} \leq U_{i,j}$, the truncated scaling problem
consists in finding a matrix $X$ of the form $X=\mathrm{T}(D^{-1} A D)$
with $D$ diagonal definite positive, $\mathrm{T}$ the truncation operator $\mathrm{T}_{i,j}(X)=\max(\min(U_{i,j}, X_{i,j}),L_{i,j})$
and such that $\sum_k X_{i,k}=\sum_j X_{j,i}$ for all~$i$.
This problem is equivalent to the following optimization problem
\begin{align*}
\max_{\rho \geq 0} & \; - \!\!\!\!\! \sum_{ i,j \in [n]} \rho_{i,j} (\log(\frac{\rho_{i,j}}{A_{i,j}})-1)  \\
\sum_{j \in [n]} \rho_{i,j} = &\sum_{j \in [n]} \rho_{j,i} \; , \; \forall i \in [n]  & (p_i) \\
L_{i,j} \leq \rho_{i,j} &\leq U_{i,j} \; , \; \forall i,j \in [n] & (\eta_{i,j} , \zeta_{i,j})
\end{align*}
If the bounds satisfy $L_{i,j}=0$ and $U_{i,j}=+\infty$ for all $i$, $j$, then we have a matrix balancing problem.
The reduction of row-column equivalence scaling to truncated scaling lies in
a graph transformation described by Schneider in~\cite{Schneider-scaling1}, Lemma~1.

The dual function can take two forms. In~\cite{Schneider-scaling1}, Schneider proposes to relax
the equality constraints and to let the bound constraints in the objective function.
One gets a dual function of the form
\begin{equation}\label{eq:dualtruncSchneider}
 \Psi(p) =  \sum_{i,j \in [n]} \psi^*_{i,j}(p_i-p_j)
\end{equation}
where each $\psi^*_{i,j}: \mathbb{R} \to \mathbb{R}$ is convex.
Then one can perform an inexact coordinate descent where
 at each step the minimization along the coordinate is not necessarily exact.

Here, we shall study the choice of relaxing all the constraints.
This approach has been proposed in~\cite{Zenios-rowactionAlgoforTransportation} for
another generalization of the row-column equivalence scaling problem
(but this generalization does not include truncated scaling). Then, 
we get the following dual function:
\begin{align} \label{eq:dualtrunc}
\theta(p, \eta, \zeta) =  \sum_{i,j \in [n]} \phi^*_{i,j}( p_i-p_j +\eta_{i,j}-\zeta_{i,j}) - \sum_{i,j \in [n]} L_{i,j} \eta_{i,j} + \sum_{i,j \in [n]} U_{i,j} \zeta_{i,j} \enspace ,
\end{align}
where $\phi_{i,j}^*(t)=A_{i,j} e^t$.
We shall minimize $\theta$ with unrestricted $p$ and nonnegative $\eta$ and $\zeta$.
As in~\cite{Censor-intervalConstrainedBalancing, Zenios-rowactionAlgoforTransportation},
we shall show in Proposition~\ref{prop:truncation} that exact expressions of the minimizers along one single coordinate exist.

\begin{proposition} \label{prop:truncation}
Given $p \in \mathbb{R}^n$, the minimizers of $\min_{\eta \geq 0, \zeta \geq 0} \theta(p, \eta, \zeta)$
are given for all $i$ and $j$ by
\begin{align*}
 \exp(\eta_{i,j})&= \max(\frac{L_{i,j}}{A_{i,j}e^{p_i-p_j}},1) \\
 \exp(-\zeta_{i,j})&= \min(\frac{U_{i,j}}{A_{i,j}e^{p_i-p_j}},1) 
\end{align*}
\end{proposition}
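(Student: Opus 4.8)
The statement is a pure calculus fact: we want the exact minimizers of the separable convex problem $\min_{\eta\ge 0,\ \zeta\ge 0}\theta(p,\eta,\zeta)$ for fixed $p$. Since $\theta(p,\cdot,\cdot)$ is a sum over pairs $(i,j)$ of the terms $\phi^*_{i,j}(p_i-p_j+\eta_{i,j}-\zeta_{i,j})-L_{i,j}\eta_{i,j}+U_{i,j}\zeta_{i,j}$ with $\phi^*_{i,j}(t)=A_{i,j}e^t$, and because $\eta_{i,j}$ and $\zeta_{i,j}$ appear in no other summand, the problem decouples into independent two-variable problems, one per pair. So I would fix $i,j$, write $s:=p_i-p_j$ and $a:=A_{i,j}$, and study
\[
 m(\eta,\zeta)=a\,e^{\,s+\eta-\zeta}-L_{i,j}\,\eta+U_{i,j}\,\zeta,\qquad \eta\ge 0,\ \zeta\ge 0.
\]

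First I would note this is convex and coercive enough that a minimizer exists (or argue directly that the KKT point found below is the global minimizer, $m$ being convex), then write the KKT conditions: there are multipliers $\ge 0$ for the constraints $\eta\ge 0$, $\zeta\ge 0$, with complementary slackness. Stationarity in $\eta$ gives $a\,e^{s+\eta-\zeta}-L_{i,j}\le 0$ with equality when $\eta>0$; stationarity in $\zeta$ gives $-a\,e^{s+\eta-\zeta}+U_{i,j}\ge 0$ with equality when $\zeta>0$. Since $L_{i,j}\le U_{i,j}$, one cannot have both $\eta>0$ and $\zeta>0$ (that would force $L_{i,j}=a e^{s+\eta-\zeta}=U_{i,j}$ only in a boundary case, and otherwise is contradictory), so I would split into three cases according to where the unconstrained value $a e^{s}$ falls relative to $[L_{i,j},U_{i,j}]$: (i) if $L_{i,j}\le a e^{s}\le U_{i,j}$, then $\eta_{i,j}=\zeta_{i,j}=0$ is optimal; (ii) if $a e^{s}<L_{i,j}$, then $\zeta_{i,j}=0$ and $\eta_{i,j}=\log(L_{i,j}/(a e^{s}))>0$, which makes $a e^{s+\eta}=L_{i,j}$; (iii) if $a e^{s}>U_{i,j}$, then $\eta_{i,j}=0$ and $\zeta_{i,j}=\log(a e^{s}/U_{i,j})>0$, which makes $a e^{s-\zeta}=U_{i,j}$. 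In all three cases the formulas collapse to
\[
 e^{\eta_{i,j}}=\max\!\Big(\tfrac{L_{i,j}}{A_{i,j}e^{p_i-p_j}},1\Big),\qquad
 e^{-\zeta_{i,j}}=\min\!\Big(\tfrac{U_{i,j}}{A_{i,j}e^{p_i-p_j}},1\Big),
\]
which is exactly the claimed expression.

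Finally I would observe that convexity of $m$ guarantees the stationary/KKT point is the unique global minimizer of each two-variable subproblem (strict convexity in the effective direction $\eta-\zeta$, together with the linear penalties pinning down the active boundary), hence the product formula over all $(i,j)$ gives the minimizer of $\theta(p,\cdot,\cdot)$; degenerate boundary cases (e.g. $L_{i,j}=U_{i,j}$, or $A_{i,j}=0$ with the convention $x\log(x/0)$) can be handled by inspection and are consistent with the stated formulas. There is no real obstacle here; the only thing requiring a little care is the case analysis and checking that the three branches of the KKT system are mutually exclusive and exhaustive, which follows from $L_{i,j}\le U_{i,j}$ and monotonicity of $t\mapsto a e^t$.
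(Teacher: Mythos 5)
Your proof is correct and takes essentially the same route as the paper: first-order/complementary-slackness conditions applied to the separable $(\eta_{i,j},\zeta_{i,j})$ terms, with the degenerate case $\eta_{i,j}>0,\ \zeta_{i,j}>0$ forcing $L_{i,j}=U_{i,j}$ and only the difference $\eta_{i,j}-\zeta_{i,j}$ being determined. One small slip: the stated optimality condition in $\eta$ should read $A_{i,j}e^{p_i-p_j+\eta_{i,j}-\zeta_{i,j}}-L_{i,j}\ge 0$ with equality when $\eta_{i,j}>0$ (you wrote $\le 0$); your three-case analysis already uses the correct sign, so the conclusion is unaffected.
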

\begin{proof}
 The proofs for $\eta$ and $\zeta$ are symmetric, so we only do the one for $\eta$.
\begin{align*}
 \frac{\partial \theta}{\partial \eta_{i,j}} = A_{i,j}e^{p_i-p_j+\eta_{i,j}-\zeta_{i,j}}-L_{i,j}
\end{align*}
Two cases may occur: either $\frac{\partial \theta}{\partial \eta_{i,j}} = 0$ and $\eta_{i,j}\geq0$ or $\eta_{i,j}=0$ and $\frac{\partial \theta}{\partial \eta_{i,j}}\geq 0$.
Hence, either $\exp(\eta_{i,j})= \frac{L_{i,j}}{A_{i,j}e^{p_i-p_j-\zeta_{i,j}}}$ and $\exp(\eta_{i,j})\geq 1$ or
we have $\exp(\eta_{i,j})=1$ and $1 \geq \frac{L_{i,j}}{A_{i,j}e^{p_i-p_j-\zeta_{i,j}}} \geq \frac{L_{i,j}}{A_{i,j}e^{p_i-p_j}}$.
We thus have the result if $\eta_{i,j}$ and $\zeta_{i,j}$ are not positive together.

Now suppose that $\eta_{i,j}>0$ and $\zeta_{i,j}>0$. In this case, we have $\exp(\eta_{i,j})= \frac{L_{i,j}}{A_{i,j}e^{p_i-p_j-\zeta_{i,j}}}$
and $\exp(-\zeta_{i,j})= \frac{U_{i,j}}{A_{i,j}e^{p_i-p_j+\eta_{i,j}}}$.
This implies that $U_{i,j}=L_{i,j}$. Thus the two bound constraints are
in fact an equality constraint and we shall consider the unconstrained multiplier 
$\xi_{i,j}=\eta_{i,j}-\zeta_{i,j}$ instead of the two former multipliers.
Then $\xi_{i,j}$ verifies $\exp(\xi_{i,j})= \frac{L_{i,j}}{A_{i,j}e^{p_i-p_j}}$.
It is unique and can be redecomposed into $\zeta_{i,j}$ and $\eta_{i,j}$ as in the proposition.
\end{proof}

This proposition shows that we do not need to store the values of $\eta_{i,j}$ and $\zeta_{i,j}$.
Indeed, for fixed $p$ and $\lambda$, if $(\eta,\zeta)=\arg\min_{\eta' \geq 0, \zeta' \geq 0} \theta(p,\lambda, \eta', \zeta')$,
then if we denote by $\midd(x,a,b)=\max(\min(x,a),b)$
 \begin{align*}
A_{i,k}e^{p_i-\eta_{i,k}+\zeta_{i,k}}&=\midd(A_{i,k}e^{p_i}, U_{i,k} e^{p_k}, L_{i,k} e^{p_k}) \\
A_{k,j}e^{-p_j-\eta_{k,j}+\zeta_{k,j}}&=\midd(A_{k,j}e^{-p_j}, U_{k,j} e^{-p_k}, L_{k,j} e^{-p_k}) \enspace.
 \end{align*}
This gives an expression of coordinate descent with no storage of $\eta_{k+1}$ nor $\zeta_{k+1}$.

\begin{algorithm}
 \caption{Exact coordinate descent for Truncated scaling}
\label{alg:trunc}
Given an initial vector $p^0$, calculate iteratively $p^k$ by selecting a coordinate $l$ following
a cyclic rule and computing: $p_{l'}^{k+1}=p_{l'}^k$ for all $l' \not = l$ and 
\begin{multline*}
 p_l^{k+1}=\frac{1}{2} \log\left(\sum_{i\in [n]} \midd(A_{i,l}e^{p_i^k}, U_{i,l} e^{p_l^k}, L_{i,l} e^{p_l^k})\right) \\
 -\frac{1}{2} \log\left(\sum_{j\in [n]}\midd(A_{l,j}e^{-p_j^k}, U_{l,j} e^{-p_l^k}, L_{l,j} e^{-p_l^k})\right)
\end{multline*}
\end{algorithm}

\begin{proposition} \label{prop:truncCoorDescent}
If $A$ has a truncated scaling, 
then Algorithm~\ref{alg:trunc}
converges linearly to a solution of the truncated scaling problem.
\end{proposition}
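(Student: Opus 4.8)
The plan is to reduce Proposition~\ref{prop:truncCoorDescent} to the general coordinate descent convergence result stated in Proposition~\ref{prop:coorDescent}, exactly in the spirit of the proof of Proposition~\ref{prop:effHotsCoorDescent}. First I would observe that Algorithm~\ref{alg:trunc} is precisely the cyclic coordinate descent algorithm (Algorithm~\ref{alg:coorDescent}) applied to the dual function $\theta(p,\eta,\zeta)$ from \eqref{eq:dualtrunc}, after eliminating the nonnegative multipliers $\eta,\zeta$ by their closed-form optimizers: by Proposition~\ref{prop:truncation} (and the $\midd$ reformulation immediately following it) the partial minimization over the $l$-th coordinate of $p$, jointly with all $\eta_{i,j},\zeta_{i,j}$, yields exactly the update formula in Algorithm~\ref{alg:trunc}. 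So the $p$-iterates of Algorithm~\ref{alg:trunc} coincide with those of cyclic coordinate descent on the reduced function $\bar\theta(p)=\min_{\eta\ge 0,\zeta\ge 0}\theta(p,\eta,\zeta)$; equivalently, one may apply Proposition~\ref{prop:coorDescent} directly to $\theta$ on the box $\mathbb{R}^n\times\mathbb{R}_+^{n^2}\times\mathbb{R}_+^{n^2}$, since that proposition already allows an unbounded box and the elimination of bounded coordinates does not change the limit.

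Next I would check the three hypotheses of Proposition~\ref{prop:coorDescent}. The function $\theta$ has the required separable form $\psi(Ex)+\langle b,x\rangle$: take $x=(p,\eta,\zeta)$, let $E$ be the matrix sending $x$ to the collection of arguments $(p_i-p_j+\eta_{i,j}-\zeta_{i,j})_{i,j}$, put $\psi(u)=\sum_{i,j}\phi^*_{i,j}(u_{i,j})=\sum_{i,j}A_{i,j}e^{u_{i,j}}$, and let the linear part $\langle b,x\rangle=-\sum_{i,j}L_{i,j}\eta_{i,j}+\sum_{i,j}U_{i,j}\zeta_{i,j}$. Each row of $E$ is nonzero (each contains a $\pm1$ in an $\eta$ or $\zeta$ slot, or in two $p$ slots). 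The domain of $\psi$ is all of $\mathbb{R}^{n^2}$, hence open, $\psi$ is $C^\infty$, and $\nabla^2\psi(Ex)=\diag\big(A_{i,j}e^{u_{i,j}}\big)$ is positive definite everywhere (here one uses that $A_{i,j}>0$ on the support, consistent with the paper's conventions). Finally, nonemptiness of the optimal set $\mathcal X^*$ is where the hypothesis ``$A$ has a truncated scaling'' enters: a truncated scaling is a primal feasible point of the primal problem with the same support as $A$, which by the constraint qualification argument used in Lemma~\ref{lem:coerciv} (citing \cite{Schneider-scaling1}) guarantees the existence of a dual optimum, i.e.\ $\mathcal X^*\neq\emptyset$. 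Proposition~\ref{prop:coorDescent} then yields at least linear convergence of $(p^k,\eta^k,\zeta^k)$ to some $(p^*,\eta^*,\zeta^*)\in\mathcal X^*$, and in particular of $p^k$ to $p^*$.

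To finish I would translate back from the dual optimizer to a truncated scaling: with $D=\diag(\exp(p^*))$, first-order optimality of $\theta$ in $p$ gives $\sum_i \midd(A_{i,l}e^{p^*_i},U_{i,l}e^{p^*_l},L_{i,l}e^{p^*_l})=\sum_j\midd(A_{l,j}e^{-p^*_j},U_{l,j}e^{-p^*_l},L_{l,j}e^{-p^*_l})$ for every $l$, which after dividing by $e^{p^*_l}$ and $e^{-p^*_l}$ respectively is exactly the balance condition $\sum_k \mathrm{T}_{l,k}(D^{-1}AD)=\sum_j \mathrm{T}_{j,l}(D^{-1}AD)$; the optimality in $\eta,\zeta$ (Proposition~\ref{prop:truncation}) is what makes $\mathrm{T}(D^{-1}AD)$ the correct truncated matrix. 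Hence $D$ solves the truncated scaling problem and the linear rate transfers from $p^k$ to $D^k=\diag(\exp(p^k))$ by continuity of $\exp$, as in Theorem~\ref{thm:convidealHots}. The only mildly delicate point, and the one I would be most careful about, is the verification that the cyclic coordinate descent on $(p,\eta,\zeta)$ with the $\eta,\zeta$ blocks eliminated really does reproduce Algorithm~\ref{alg:trunc} step for step — in particular that sweeping $p$ while keeping $\eta,\zeta$ at their current optimal values, and interleaving with the exact $\eta,\zeta$ updates, is equivalent to a single $\midd$-based update of $p_l$; this is essentially the content of the displayed $\midd$ identities following Proposition~\ref{prop:truncation}, but it should be stated explicitly so that Proposition~\ref{prop:coorDescent} applies cleanly.
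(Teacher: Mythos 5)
Your proposal is correct and follows essentially the same route as the paper: identify Algorithm~\ref{alg:trunc}, via Proposition~\ref{prop:truncation}, as cyclic coordinate descent (Algorithm~\ref{alg:coorDescent}) on the dual function $\theta$ of \eqref{eq:dualtrunc} with the multipliers $\eta,\zeta$ updated exactly before each $p_l$ step, and then invoke Proposition~\ref{prop:coorDescent} as in Proposition~\ref{prop:effHotsCoorDescent}. You simply spell out details the paper leaves implicit (nonemptiness of the optimal set from the existence of a truncated scaling, the Hessian condition, and the translation of the dual optimum back into a truncated scaling), which is fine.
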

\begin{proof}
By Proposition~\ref{prop:truncation}, we can see that Algorithm~\ref{alg:trunc}
is a coordinate descent algorithm (Algorithm~\ref{alg:coorDescent})
applied to the minimization of the dual function $\theta$ defined in~\eqref{eq:dualtrunc}
such that for every $l$, the coordinate selection order is
\[
 \eta_{l,1}, \ldots, \eta_{l,n}, \zeta_{l,1}, \ldots, \zeta_{l,n}, p_l
\]
As in Proposition~\ref{prop:effHotsCoorDescent}, we then just
verify the hypotheses of Proposition~\ref{prop:coorDescent}.
\end{proof}

\begin{remark}
 Due to the truncation, it is not clear how to determine the primitivity
of the gradient of the fixed point operator of a HOTS-like algorithm
for the truncated scaling problem.
\end{remark}

Tomlin proposed in~\cite{Tomlin-HOTS} to search for a flow of websurfers $\rho$ that
maximizes the entropy for the effective HOTS problem (Section~\ref{sec:effhots})
with additional bound constraints of the type 
\begin{equation} \label{eqn:hotsbounds}
U_{i,j} \leq \rho_{i,j} \leq L_{i,j} \enspace.
\end{equation}
These bound constraints
model the fact that one may have information on the actual flow of websurfers
through some hyperlink, even if the flow on every hyperlink is out of reach.

We propose the following coordinate descent algorithm (Algorithm~\ref{alg:HotsBounds}) 
that couples the algorithms presented in
Proposition~\ref{prop:effHotsCoorDescent}
and Algorithm~\ref{alg:trunc}. The proof of convergence is just the concatenation of the 
arguments of Propositions~\ref{prop:effHotsCoorDescent} and~\ref{prop:truncCoorDescent}.

\begin{algorithm}
\caption{Coordinate descent for the HOTS problem with bounds}
\label{alg:HotsBounds}
Start with an initial point $y^0 \in \mathbb{R}^n$, $y^0>0$.
Given $y^k$, select a coordinate $i \in [n+1]$ and compute $y^{k+1}$ such that 
$y^{k+1}_j=y^k_j$, $\forall j \not = l$ and
\begin{align*}
 y^{k+1}_l= \left( \frac{ \sum_{i\in [n]} \midd(A_{i,l}y_i^k, U_{i,l} y^k, L_{i,l} y_l^k) + e^{a^k} y_{n+1}^k}{\sum_{j\in [n]}\midd(A_{l,j}(y_j^k)^{-1}, U_{l,j} (y_l^k)^{-1}, L_{l,j} (y_l^k)^{-1}) + e^{-b^k}(y_{n+1}^k)^{-1}} \right )^{\frac{1}{2}} 
\end{align*}
If $i<n+1$, then set $(\mu^{k+1}, a^{k+1}, b^{k+1})=(\mu^{k}, a^{k}, b^{k})$,
otherwise 
\begin{align*}
\mu^{k+1}&= \log(\frac{2\alpha-1}{\sum_{j,j' \in [n]} \midd(A_{j,j'}\frac{y_j^k}{y_{j'}^k}, U_{j,j'},  L_{j,j'} )}) \enspace, \\
a^{k+1}&=\log(\frac{1-\alpha}{\sum_{j\in[n]}\frac{y^k_{n+1}}{y^k_j}e^{\mu^{k+1}}})\enspace, \qquad
b^{k+1}=-\log(\frac{1-\alpha}{\sum_{j'\in[n]}\frac{y^k_{j'}}{y^k_{n+1}}e^{\mu^{k+1}}})\enspace.
\end{align*}
\end{algorithm}


\section{Comparison with alternative algorithms} \label{sec:compalgos}

We give in Table~\ref{tab:compalgos} a comparison of four algorithms for the matrix
balancing problem: interior-reflective Newton method (Matlab fminunc function), coordinate descent, DomEig and ideal HOTS. We considered a small matrix, a medium size matrix and two large matrices. The matrix $A=\begin{bmatrix} \epsilon & 1 \\ 2 & 0 \end{bmatrix}$, with $\epsilon=10^{-3}$, is a nearly imprimitive matrix.
The CMAP matrix is the adjacency matrix of a fragment of the web graph
of size $1,500$. 
The crawl consists of the www.cmap.polytechnique.fr website and surrounding pages.
The NZ Uni matrix comes from a crawl of New Zealand Universities websites, available at~\cite{NZ2006}. It has 413,639 pages. 
The uk2002 matrix comes from a crawl of the .uk name domain with 18,520,486 pages, gathered by UbiCrawler~\cite{BCSU3}. For the matrix balancing problem, we added to the entries of the adjacency matrices
arising from fragments of the web graph
 a small positive constant equal to $1/n$ (to guarantee irreducibility of the matrix).
We launched our numerical experiments on a~personal computer with 4 Intel Xeon CPUs at 2.98~GHz and 8~GB RAM. 

\begin{table}
\centering
 \begin{tabular}{|l|r|r|r|r|}
\hline
  & $A=\begin{bmatrix} \epsilon & 1 \\ 2 & 0 \end{bmatrix}$ & \hspace{-2ex} \begin{tabular}{c} CMAP \\ 1,500 p.\end{tabular} \hspace{-2ex} &  \begin{tabular}{c} NZ Uni \\413,639 p.\end{tabular}  & \hspace{-2.5ex} \begin{tabular}{c} uk2002 \\18,520,486 p.\end{tabular} \hspace{-2.5ex} \\
\hline
$\abs{\lambda_2(P)}$ (Thm.~\ref{thm:convidealHots}) & 0.9993    &  0.8739   &  0.9774   &  0.998   \\
\hline
Matlab's fminunc      &  0.015 s   & 948 s    &  out of mem. & - \\
\hline
DomEig \cite{Johnson-DomEig}               & 1.87 s     & 34.4 s &  $>600$ s  &  -  \\
\hline
Coordinate desc. (Alg. \ref{alg:coorDescentDSS})  &  0.006 s  & 0.03 s   &  6.06 s  &  2,391 s  \\
\hline
Ideal HOTS (Alg. \ref{alg:hots})      &  2.0 s     & 0.02 s &  7.52 s  & 1,868 s \\
\hline
 \end{tabular}
\caption{Execution times for 4 algorithms to solve the matrix balancing problem. General purpose algorithms like Newton methods (fminunc)
 are outperformed by coordinate descent and ideal HOTS for this problem.
DomEig~\cite{Johnson-DomEig} does not seem to be very efficient for these sparse problems.
On the other hand, coordinate descent and ideal HOTS have similar performances. We remark however that
coordinate descent has a better behavior for imprimitive matrices but that we have the expression of the 
rate of convergence of ideal HOTS, that we do not have for coordinate descent.
\label{tab:compalgos}
}
\end{table}

Convex optimization solvers, coding algorithms like quasi-Newton, are heavy machineries that can reach quadratic convergence and can handle general problems.
They however need complex algorithms: one should not program them by scratch but use robust public codes. 
Parallel computation is not so trivial and tuning the parameters may be difficult.
Second order methods also need to compute the Hessian matrix, which may be a large dense matrix.

The DomEig algorithm~\cite{Johnson-DomEig} consists of 2 loops. The inner loop is the power method, it has a linear speed of convergence equal to the spectral gap of the matrix considered. Most of the computational cost lies in the power iterations,
since the outer loop is simple. This algorithm is specially designed for matrix balancing and accepts no extension.
It does not seem to be very efficient for sparse problem, when compared to HOTS and coordinate descent. Moreover, 
the precision required for the determination of the principal eigenvalue has a strong impact on the performance of
the algorithm.

Schneider's coordinate descent DSS algorithm for the matrix balancing problem is a
 very efficient and scalable algorithm. It remains efficient for non primitive matrices.

The ideal HOTS algorithm has a linear speed of convergence equal to the spectral gap of $P$. 
On our experiments, it performs well for medium and large size problems but lacks efficiency for
imprimitive matrices. For the fragments of the web graph, it converged in a
 bit more iterations than coordinate descent but each iteration is a bit simpler.
So the execution times are about the same.
Its advantage compared to coordinate descent is that its implementation
only needs left and right matrix-vector products and element-wise operations (division and square root).
Hence, it does not require to compute and store the transpose of the adjacency matrix,
which can be useful for web scale applications.

In Table~\ref{tab:compalgosHOTS}, we compare coordinate descent and the effective HOTS algorithm
for the effective HOTS problem. Both algorithms scale well and have comparable computational costs.
However, we remark that the rate of convergence seems to deteriorate when the size of the problem
increases. It might be necessary to choose smaller values for $\alpha$ for large graphs in order
to compensate this phenomenon. In the next section, we propose an alternative solution consisting in
normalizing the adjacency matrix prior to computing the HOTS score, and hence minimizing a relative entropy
instead of the entropy function.

In Table~\ref{tab:compalgosRAS}, we give the execution times for the HOTS problem 
with bounds \eqref{eqn:hotsbounds}.

\begin{table}
\centering
 \begin{tabular}{|l|r|r|r|r|}
\hline
                      & $A=\begin{bmatrix} \epsilon & 1 \\ 2 & 0 \end{bmatrix}$ & CMAP & NZ Uni & uk2002  \\
\hline
$\abs{\lambda_2(\nabla F)}$ (Prop.~\ref{prop:contract})   &  0.8846   & 0.946    & 0.995   &  0.9994   \\
\hline
Coordinate descent (Prop.~\ref{prop:effHotsCoorDescent})  &  0.0061 s & 0.0613 s & 35.7  s &  3,809 s   \\
\hline
Effective HOTS (Alg.~\ref{alg:effhots})     		  &  0.0217 s & 0.0589 s &  36.5 s & 3,017 s   \\
\hline
PageRank \cite{Brin-Anatomy}      			  & 0.0216 s  & 0.0195 s & 2.9 s   & 270 s   \\
\hline
 \end{tabular}
\caption{Comparison of coordinate descent and HOTS for the effective HOTS problem
with $\alpha=0.9$. Both algorithms seem to perform equally. In particular, unlike PageRank,
the convergence rate deteriorates with the size of the fragment of the web graph considered. 
\label{tab:compalgosHOTS}
}
\end{table}

\begin{table}
\centering
 \begin{tabular}{|l|r|r|r|}
\hline
                      & $A=\begin{bmatrix} \epsilon & 1 \\ 2 & 0 \end{bmatrix}$ & CMAP &   NZ Uni   \\
\hline
Inexact coor. descent (Eq. \eqref{eq:dualtruncSchneider} and \cite{Schneider-scaling}) & 0.019 s  & 4.28 s   &  $>2000$ s     \\
\hline
Exact coor. descent (Alg.~\ref{alg:HotsBounds})   &  0.005 s  & 0.23 s   &  100 s     \\
\hline
 \end{tabular}
\caption{Execution times for two algorithms to solve the effective HOTS problem where some arcs have prescribed
bounds on their flow~\cite{Tomlin-HOTS} (these bounds are determined at random). Exact coordinate descent  (Algorithm~\ref{alg:HotsBounds}) is
faster than inexact coordinate descent. Indeed we do not need any line search for Algorithm~\ref{alg:HotsBounds}.
By comparison with Table~\ref{tab:compalgosHOTS}, we can see that the computational expense is multiplied by less than 4 with the bound constraints.
\label{tab:compalgosRAS}
}
\end{table}

\section{A normalized HOTS algorithm for a better convergence rate}
\label{sec:nomHots}

The previous study has shown two drawbacks of Tomlin's HOTS algorithm.
First of all, its convergence rate seems to deteriorate when the size of the
problem increases. The second problem concerns manipulations of the HOTS score:
when one single page is considered, a very good strategy is to point to no page, and
thus make this page a dangling node in the web graph. This comes from the relation
of HOTS with the anti-Perron ranking (Remark~\ref{rem:antiperron}).
Indeed, the anti-Perron ranking penalizes bad quality links but also 
adding any outlink, even a good quality one, diminishes the score of the page where it has been added.

We now propose a modification of the HOTS algorithm that tackles those two issues.
In order to stop penalizing the presence of hyperlinks on a page, 
we normalize the adjacency matrix, and thus state the entropy optimization problem
as a relative entropy optimization problem. Then the Perron ranking reduces to PageRank and
the normalized anti-Perron ranking does not penalize the number of links any more.

We also need to address the problem of dangling nodes, in which the
normalization of the corresponding row of the adjacency matrix is not defined,
and the reducibility of the adjacency matrix.
A possibility is to keep on inspiring from the PageRank and consider the Google
matrix instead of the normalized adjacency matrix. With this choice,
dangling pages are considered to point to every page, which implies that
they have a low rank in the normalized HOTS score, when compared to the rank given by PageRank.
Instead, we suggest to set Tomlin's effective network model to solve the reducibility
problem and add another fictitious node that point to every page and is pointed to
by every dangling page.

We end up with the following network flow problem where the $(n+2)\times (n+2)$ matrix $M$ is defined by:
\[
M_{i,j}= \begin{cases}
\frac{A_{i,j}}{\sum_k A_{i,k}}  & \text{if } i,j\leq n, \sum_k A_{i,k} \geq 1 \\
0 				& \text{if } i,j\leq n, \sum_k A_{i,k} =0 
\end{cases}
\]
\vspace{-3ex}
\[ M=\begin{bmatrix}  M_{[n],[n]} & f & 1 \\ 1^T & 0 & 1 \\ 1^T & 1 & 0 \end{bmatrix} \]
and $f$ is the 0-1 vector such that $f_i=1$ if and only if $i$ is a dangling node.

\begin{align} \label{eq:normHots}
\max_{\rho \geq 0}  \; - \!\!\!\!\! \sum_{ i,j \in [n+2]} \rho_{i,j} \big(\log(\frac{\rho_{i,j}}{M_{i,j}})-1 \big)
\end{align}
\vspace{-3ex}
\[
\sum_{i,j \in [n+2]} \rho_{ij} =  1  \;, \qquad \sum_{j \in [n+2]} \rho_{i,j} = \sum_{j \in [n+2]} \rho_{j,i} \; , \; \forall i \in [n+2] 
\]
\vspace{-3ex}
\[
\sum_{j \in [n]} \rho_{n+2,j} =  1-\alpha= \sum_{i \in [n]} \rho_{i,n+2} 
\]

We call this optimization problem the normalized HOTS problem.
The normalized HOTS algorithm is defined in the same way as the effective HOTS algorithm but with $M$ instead of $A'$.
As in Lemma~\ref{lem:lambda}, we define
$\mu'= \log(\frac{2\alpha-1}{\sum_{i,j \in [n]} M_{i,j}e^{p_i-p_j}})$,
$a'=\log(\frac{1-\alpha}{2\alpha-1}\frac{\sum_{i,j\in [n]} M_{i,j}e^{p_i-p_j}}{\sum_{j\in[n]}e^{p_{n+1}-p_j}})$,
$b'=-\log(\frac{1-\alpha}{2\alpha-1}\frac{\sum_{i,j\in [n]} M_{i,j}e^{p_i-p_j}}{\sum_{i\in[n]}e^{p_i-p_{n+1}}})$
and the triple $\lambda'(p)=(\mu',a',b')$. 
For $\lambda' \in \mathbb{R}^3$, we denote
\begin{equation*}
 {g'}^{\lambda'}_i(y)=\left ( \frac{\sum_{j\in [n]} M_{j,i} y_j+e^{a'}y_{n+1}}{\sum_{k\in [n]}M_{i,k} (y_k)^{-1} +e^{-b'}(y_{n+1})^{-1}} \right)^{\frac{1}{2}} \enspace.
\end{equation*}
\begin{algorithm}
 \caption{Normalized HOTS algorithm}
\label{alg:normhots}
Given an initial point $y^0 \in \mathbb{R}^n$, the effective HOTS algorithm is defined by
\[
 y^{k+1}={g'}^{\lambda'(\log(y^k))}(y^k)
\]
\end{algorithm}

\begin{proposition}
If there exists a primal feasible point to~\eqref{eq:normHots} with the same pattern as~$A'$,
 then the normalized HOTS algorithm converges with a linear rate of convergence.
\end{proposition}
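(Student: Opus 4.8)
The plan is to reduce the normalized HOTS algorithm to the effective HOTS algorithm already analyzed in Theorem~\ref{thm:convHots}, applied to the matrix $M$ in place of $A'$. The key observation is that $M$ is, structurally, of exactly the same form as $A'$: it consists of a nonnegative ``core'' block $M_{[n],[n]}$ together with an extra fictitious node (here two extra nodes, but the node $n+1$ plays exactly the role of the artificial node in the effective model, and the node $n+2$ can be absorbed into the core block in the same way the original $A$ was). First I would check that the normalized HOTS algorithm (Algorithm~\ref{alg:normhots}) is the effective HOTS algorithm (Algorithm~\ref{alg:effhots}) for the network whose adjacency matrix is $M$ with the last node split off as the artificial node: the maps $\lambda'(p)$ and ${g'}^{\lambda'}$ are obtained from \eqref{eqn:def-flambda} and the definition of $\lambda(p)$ in Lemma~\ref{lem:lambda} simply by substituting $M$ for $A$, so the iteration $y^{k+1}={g'}^{\lambda'(\log(y^k))}(y^k)$ coincides with $y^{k+1}=g^{\lambda(\log(y^k))}(y^k)$ for that network.

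Second, I would invoke the hypothesis that there exists a primal feasible point to \eqref{eq:normHots} with the same pattern as the relevant core matrix. This is precisely the hypothesis of Theorem~\ref{thm:convHots} (existence of a primal feasible flow with the pattern of $A'$), transported to the network $M$. Hence Lemmas~\ref{lem:strictconvex}, \ref{lem:coerciv} and \ref{lem:convLambdafixed} apply verbatim with $M$ replacing $A$: the dual function $\tilde\theta$ associated to \eqref{eq:normHots} is strictly convex and coercive on the appropriate hyperplane, so it has a unique minimizer up to an additive constant, and for each fixed $\lambda'$ the inner fixed-point map $f^{\lambda'}$ converges by Theorem~\ref{thm:convidealHots} (since the corresponding bordered matrix $\begin{bmatrix} M_{[n],[n]} & e^{a'}1 \\ e^{b'}1^T & 0\end{bmatrix}$ is primitive whenever $M_{[n],[n]}\neq 0$, which holds as soon as $A\neq 0$ has a non-dangling node; the degenerate case $A=0$ being trivial or excluded). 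The Lyapunov inequality of Theorem~\ref{thm:thetadecroit} also carries over, since it is a statement about the ideal HOTS operator on an arbitrary nonnegative matrix.

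Third, with these ingredients in hand, the proof of Theorem~\ref{thm:convHots} goes through word for word with $M$ in place of $A'$: one builds the projected sequence $q_k=\tilde F(q_{k-1})$ on the hyperplane $H$, uses coercivity to get boundedness, extracts limit points, uses the Lyapunov decrease together with uniqueness of $\lambda'(\cdot)$-minimizers (Lemma~\ref{lem:lambda} for $M$) and of $\tilde\theta$-minimizers (Lemma~\ref{lem:strictconvex} for $M$) to show every limit point is optimal, hence the whole sequence converges in the projective space; the local rate analysis of Theorem~\ref{thm:convrate} then gives $\abs{\lambda_2(\nabla F(p^*))}<1$ and therefore linear convergence, which lifts back to $(p_k)$ and to $(\exp(p_k))$ exactly as before.

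The only genuinely new point — and the main thing to be careful about — is the bookkeeping needed to see that $M$, which has two extra rows and columns beyond the $n\times n$ core, really does fit the template of Theorem~\ref{thm:convHots}, whose model has only one extra node. The right way to handle this is to treat node $n+1$ (the one whose flow is prescribed to be $1-\alpha$) as \emph{the} artificial node of the effective model, and to regard the $(n+1)\times(n+1)$ submatrix on nodes $\{1,\dots,n,n+2\}$ as the new ``$A$''. One must check that this submatrix is irreducible/primitive enough for Theorem~\ref{thm:convidealHots} to apply at the inner level — here the fictitious node $n+2$, which points to and is pointed from every dangling node and the artificial node, is exactly what restores strong connectivity and primitivity of the core even when the web graph has dangling nodes. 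Once that structural identification is made explicit, no further estimates are needed and the result follows.
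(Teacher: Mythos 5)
Your proposal is correct and takes essentially the same route as the paper, whose proof is just the observation that the argument of Theorem~\ref{thm:convHots} never used the $0$--$1$ structure of $A$, only the nonnegativity of its entries, so it applies verbatim with $M$ in place of $A'$. One bookkeeping remark: in the paper's indexing the node with prescribed throughput $1-\alpha$ and all-ones border is $n+2$, so the core playing the role of $A$ is the $(n+1)\times(n+1)$ block on nodes $\{1,\dots,n,n+1\}$ (your labels are swapped, following the $y_{n+1}$ notation of the algorithm), but this does not affect the argument.
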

\begin{proof}
In the proof of Theorem~\ref{thm:convHots} we did not use the fact that the adjacency matrix
$A$ is a 0-1 matrix, only that its elements are nonnegative. Hence, the convergence proof
of the effective HOTS algorithm directly applies to the normalized HOTS algorithm.
\end{proof}

We shall see in Table~\ref{tab:compalgosHOTSNorm} that the convergence properties of
this algorithm seem to be better that those of the classical HOTS algorithm.
Nevertheless, these experimental results are still to be validated by other 
theoretical and numerical studies.

\begin{table}
\centering
 \begin{tabular}{|l|r|r|r|}
\hline
& CMAP & NZ Uni & uk2002  \\
 \hline
$\abs{\lambda_2(\nabla F)}$ (classical HOTS)   & 0.946    & 0.995   &  0.9994  \\
\hline
$\abs{\lambda_2(\nabla F)}$ (normalized HOTS)		&   0.906  & 0.988       & 0.96    \\
\hline
Execution times for Normalized HOTS	  		&   0.055 s &  46.24 s	& 752 s   \\
\hline
 \end{tabular}
\caption{Performances of the normalized HOTS algorithm presented in Section~\ref{sec:nomHots}.
The correlation between the deterioration of the convergence rate of the algorithm, given by the second eigenvalue of the matrix $\nabla F$, and the size of the data set does not hold any more.
Moreover, on all the tests we performed, the convergence rate remained under 0.99.
\label{tab:compalgosHOTSNorm}
}
\end{table}

\section*{Acknowledgment}

 I gratefully thank St\'ephane Gaubert who advised me to apply nonlinear Perron-Frobenius theory
to tackle the convergence of HOTS algorithm.

\ifthenelse{\boolean{arxiv}}
{
\bibliographystyle{alpha}
}
{
\bibliographystyle{siam.bst}
}
\bibliography{hots,../PAPERS2011/perronOpt,../PAPEROPTIMRANK/pagerank}

\end{document}